\newcounter{ctr}
\theoremstyle{plain}
\newtheorem{theorem}{Theorem}[section]
\newtheorem*{lemma*}{Lemma}
\newtheorem{lemma}[theorem]{Lemma}
\newtheorem{corollary}[theorem]{Corollary}
\newtheorem{proposition}[theorem]{Proposition}
\newtheorem{conjecture}[theorem]{Conjecture}
\newtheorem{propdef}[theorem]{Proposition-Definition}
\theoremstyle{definition}
\newtheorem{definition}[theorem]{Definition}
\newtheorem{remark}[theorem]{Remark}
\newtheorem{example}[theorem]{Example}
\newcommand{\B}{\ensuremath{\mathscr{B}}}
\newcommand{\CC}{\ensuremath{\mathbb{C}}}
\newcommand{\F}{\ensuremath{\mathscr{F}}}
\newcommand{\gl}{\ensuremath{\mathfrak{gl}}}
\renewcommand{\H}{\ensuremath{\mathscr{H}}}
\renewcommand{\L}{\ensuremath{\mathscr{L}}}
\renewcommand{\O}{\ensuremath{\mathscr{O}}}
\newcommand{\QQ}{\ensuremath{\mathbb{Q}}}
\newcommand{\R}{\ensuremath{\mathcal{R}}}
\renewcommand{\sl}{\ensuremath{\mathfrak{sl}}}
\newcommand{\ZZ}{\ensuremath{\mathbb{Z}}}
\newcommand{\liftT}{\ensuremath{\tilde{T}}}
\newcommand{\be}{\begin{equation}}
\newcommand{\ee}{\end{equation}}
\newcommand{\Res}{\text{\rm Res}}
\renewcommand{\S}{\ensuremath{\mathcal{S}}}
\newcommand{\tsr}{\ensuremath{\otimes}}
\newcommand{\C}{\ensuremath{C^{\prime}}} 
\newcommand{\cp}{\ensuremath{c^{\prime}}} 
\newcommand{\liftCp}{\ensuremath{\tilde{C}^{\prime}}}
\newcommand{\liftC}{\ensuremath{\tilde{C}}}
\newcommand{\liftc}{\ensuremath{\tilde{c}}}
\newcommand{\liftcp}{\ensuremath{\tilde{c}^{\prime}}} 
\newcommand{\br}[1]{\ensuremath{\overline{#1}}}
\newcommand{\nsbr}[1]{{\ensuremath{\check{#1}}}}
\renewcommand{\u}{\ensuremath{u}}  
\newcommand{\ui}{\ensuremath{u^{-1}}} 
\newcommand{\leftexp}[2]{{\vphantom{#2}}^{#1}{#2}}
 \newcommand{\dual}[1]{\ensuremath{{#1}^\vee}}
\newcommand{\y}{\ensuremath{Y}} 
\newcommand{\gd}{\ensuremath{\trianglerighteq}}
\newcommand{\gdneq}{\ensuremath{\triangleright}}
\newcommand{\ld}{\ensuremath{\trianglelefteq}}
\newcommand{\ldneq}{\ensuremath{\triangleleft}}
\newcommand{\tto}{\ensuremath{\rightsquigarrow}}
\newcommand{\sh}{\text{\rm sh}}
\newcommand{\transpose}[1]{{#1}^t}
\newcommand{\nsH}{\nsbr{\H}}
\newcommand{\bv}{\mathbf{v}}
\newcommand{\sP}{\mathcal{P}}
\newcommand{\field}{\ensuremath{K}}
\newcommand{\klo}[1]{\ensuremath{\leq_{#1}}}
\newcommand{\klocov}[1]{\ensuremath{\preceq_{#1}}}
\newcommand{\kloneq}[1]{\ensuremath{<_{#1}}}
\newcommand{\bT}{\ensuremath{\mathbf{T}}}
\newcommand{\bU}{\ensuremath{\mathbf{U}}}
\newcommand{\gt}{\ensuremath{C^\text{sn}}} 
\newcommand{\gtp}{\ensuremath{C^{\prime\text{sn}}}} 
\DeclareMathOperator{\sort}{sort}
\newcommand{\heart}{\heartsuit}
\newcommand{\myvcenter}[1]{\ensuremath{\vcenter{\hbox{#1}}}}
\newcommand{\crystalu}[1]{\ensuremath{\tilde{#1}^{\hspace{-2pt}\text{up}}}}
\newcommand{\crystall}[1]{\ensuremath{\tilde{#1}^{\hspace{-2pt}\text{low}}}}
\newcommand{\Uq}{\bU}
\newcommand{\QQA}{\ensuremath{\QQ[\u,\ui]}}
\newcommand{\Oint}{\ensuremath{\O_{\text{int}}^{\geq 0}}}
\newlength{\cellsizeCol}
\newlength{\cellsize}
\newcommand\tableau[1]{
\vcenter{
\let\\=\cr
\baselineskip=-16000pt \lineskiplimit=16000pt \lineskip=0pt
\halign{&\tableaucell{##}\cr#1\crcr}}}
\newcommand{\tableaucell}[1]{{%
\def \arg{#1}\def \void{}%
\ifx \void \arg
\vbox to \cellsize{\vfil \hrule width \cellsize height 0pt}%
\else \unitlength=\cellsize
\begin{picture}(1,1)
\put(0,0){\makebox(1,1){$#1$}}
\put(0,0){\line(1,0){1}}
\put(0,1){\line(1,0){1}}
\put(0,0){\line(0,1){1}}
\put(1,0){\line(0,1){1}}
\end{picture}%
\fi}}
\newcommand\pad[1]{
\vtop{
\let\\=\cr
\baselineskip=-16000pt
\lineskiplimit=16000pt
\lineskip=0pt
\halign{& \inviscell{##} \cr #1 \crcr} }
\hspace{-.73ex}}
\newcommand{\inviscell}[1]{{%
\unitlength=\cellsizeCol
\begin{picture}(1,1)
\put(0,0){\makebox(1,1){$#1$}}
\end{picture}%
}}
\begin{document}

\address{Department of Mathematics, Drexel University, Philadelphia, PA 19104}
\email{jblasiak@gmail.com} \keywords{canonical basis, Hecke algebra, Schur-Weyl duality, seminormal basis}

\author{Jonah Blasiak}\thanks{The author is currently an NSF postdoctoral fellow.}
\title{Quantum Schur-Weyl duality and projected canonical bases}
\begin{abstract}
Let  $\H_r$ be the generic type $A$ Hecke algebra defined over $\ZZ[\u, \ui]$.
The Kazhdan-Lusztig bases $\{C_w\}_{w \in \S_r}$ and $\{\C_w\}_{w \in \S_r}$ of $\H_r$ give rise to two different bases of the Specht module $M_\lambda$,  $\lambda \vdash r$, of $\H_r$. These bases are not equivalent and we show that the transition matrix  $S(\lambda)$ between the two is the identity at $\u = 0$ and $\u = \infty$.  To prove this, we first prove a similar property for the transition matrices  $\liftT, \liftT'$ between the Kazhdan-Lusztig bases and their projected counterparts $\{\liftC_w\}_{w \in \S_r},$ $\{\liftCp_w\}_{w \in \S_r}$, where $\liftC_w := C_w p_\lambda$, $\liftCp_w := \C_w p_\lambda$ and  $p_\lambda$ is the minimal central idempotent corresponding to the two-sided cell containing  $w$.  We prove this property of $\liftT,\liftT'$ using quantum Schur-Weyl duality and results about the upper and lower canonical basis of  $V^{\tsr r}$ ($V$ the natural representation of $U_q(\gl_n)$) from \cite{GL, FKK, Brundan}.  We also conjecture that the entries of  $S(\lambda)$ have a certain positivity property.
\end{abstract}
\maketitle

\section{Introduction}
\label{s Introduction}
Let $\{C_w: w \in \S_r\}$ and $\{\C_w: w \in \S_r\}$ be the Kazhdan-Lusztig bases of the type $A$ Hecke algebra $\H_r$, which we refer to as the upper and lower canonical basis of $\H_r$, respectively. After working with these bases for a while, we have convinced ourselves that it is not particularly useful to look at both at once---one can work with one or the other and it is easy to go back and forth between the two (precisely, there is an automorphism $\theta$ of $\H_r$ such that $\theta(\C_w) = (-1)^{\ell(w)}C_w$).
However, our recent work on the nonstandard Hecke algebra  \cite{Bnsbraid, B4} has forced us to look at both these bases simultaneously.  Before explaining how this comes about, let us describe our results and conjectures.

Let $\field = \QQ(\u)$, where $\u$ is the Hecke algebra parameter, and let $M_\lambda$ be the $\field \H_r$-irreducible of shape $\lambda \vdash r$.
The upper and lower canonical basis of $\H_r$ give rise to bases $\{C_Q : Q \in \text{SYT}(\lambda)\}$ and $\{\C_Q : Q \in \text{SYT}(\lambda)\}$ of $M_\lambda$, which we refer to as the upper and lower canonical basis of $M_\lambda$. These bases are not equivalent, and it appears to be a difficult and interesting question to understand the transition matrix  $S(\lambda)$ between them (which is well-defined up to a global scale by the irreducibility of  $ M_\lambda$).  It turns out that  $S(\lambda)$ is the identity at $\u =0$ and $\u = \infty$ (Theorem \ref{t transition C' to C}) and, though it is not completely clear what it should mean for  an element of $\field$ to be nonnegative, its entries appear to have some kind of nonnegativity (see Conjecture \ref{cj non-negativity T T' D}).

To compare the upper and lower canonical basis of $ M_\lambda$, we compare them both to certain seminormal bases of $M_\lambda$ in the sense of \cite{RamSeminormal}.  These bases are compatible with restriction along the chain of subalgebras $\H_1 \subseteq \cdots \subseteq \H_{r-1} \subseteq \H_r$ (see Definition \ref{d seminormal}).  Specifically, we define an upper (resp. lower) seminormal basis which differs from the upper (resp.  lower) canonical basis by a unitriangular transition matrix  $T(\lambda)$ (resp.  $T'(\lambda)$).  It appears that these transition matrices also possess some kind of nonnegativity property.  Since the restrictions $\H_{i-1} \subseteq \H_i$ are multiplicity-free, these seminormal bases differ from each other by a diagonal transformation $D(\lambda)$. Hence we have $S(\lambda) = T(\lambda) D(\lambda) T'(\lambda)^{-1}$.

We briefly mention some related investigations in the literature.  Other seminormal bases of $M_\lambda$ have been defined---for instance, Hoefsmit, and later, independently, Ocneanu, and Wenzl construct a Hecke algebra analog of Young's orthogonal basis (see \cite{Wenzl}).  This basis differs from our upper and lower seminormal bases by a diagonal transformation, but is not equal to either.  The recent paper \cite{GLS} uses an interpretation of the lower seminormal basis in terms of non-symmetric Macdonald polynomials to study $T'(\lambda)$ for $\lambda$ a two-row shape and gives an explicit formula for a column of this matrix (see Remark \ref{r Lascoux's paper}).
Along similar lines, the transition matrix between the upper canonical basis at $\u =1$ and Young's natural basis of  $M_\lambda$  is studied by Garsia and McLarnan in \cite{GM}; they show that this matrix is unitriangular and has integer entries.

Our investigation further involves projecting the basis element $C_w$ (resp.  $\C_w$) onto the isotypic component corresponding to the two-sided cell containing $w$.  This results in what we call the projected upper (resp.  lower) canonical basis; let $\liftT$ (resp.  $\liftT'$) denote the transition matrix between the projected and upper (resp. lower) canonical basis. The properties we end up proving about  $S(\lambda), T(\lambda), T'(\lambda)$ all follow from properties of $\liftT$ and  $\liftT'$.  And we are able to get some handle on  $\liftT$ and  $\liftT'$ using quantum Schur-Weyl duality. Specifically, we use the compatibility between an upper (resp.  lower) canonical basis of $V^{\tsr r}$ with the upper (resp.  lower) canonical basis of  $\H_r$ and well-known results about crystal lattices, where $V$ is the natural representation of $U_q(\gl_n)$. The results we need are similar to those in \cite{GL, FKK, Brundan} and follow easily from results of \cite{LBook,Kas2}.  Brundan's paper \cite{Brundan} is particularly well adapted to our needs and we follow it closely.

We now return to our original motivation. The type $A$ \emph{nonstandard Hecke algebra}  $\nsH_r$ is the subalgebra of $\H_r \tsr \H_r$ generated by the elements
\[ \sP_s := \C_s \tsr \C_s + C_s \tsr C_s, \ s \in S, \]
where $S = \{s_1, \dots, s_{r-1}\}$ is the set of simple reflections of $\S_r$.
We think of the inclusion $\nsH_r \hookrightarrow   \H_r \tsr \H_r$ as a deformation of the coproduct $\ZZ \S_r \to \ZZ \S_r \tsr \ZZ \S_r$,  $w \mapsto w \tsr w$.  This algebra was constructed by Mulmuley and Sohoni in \cite{GCT4} in an attempt to use canonical bases to understand Kronecker coefficients.

Let $\epsilon_+ = M_{(r)}$, $\epsilon_- = M_{(1^r)}$ be the trivial and sign representations of $\field \H_r$. Any representation $M_\lambda \tsr M_\mu$ of $\field (\H_r \tsr \H_r)$ is a $\field \nsH_r$-module by restriction. The trivial and sign representations $\nsbr{\epsilon}_+$ and $\nsbr{\epsilon}_-$ of $\field \nsH_r$ are the restrictions of $\epsilon_+ \tsr \epsilon_+$ and $\epsilon_+ \tsr \epsilon_-$, respectively.
There is a single copy of $\nsbr{\epsilon}_+$ inside $\Res_{\field \nsH_r}M_\lambda \tsr M_\lambda$ and a single copy of $\nsbr{\epsilon}_-$ inside $\Res_{\field \nsH_r}M_\lambda \tsr M_{\lambda'}$,  where $\lambda'$ is the conjugate partition of $\lambda$.
These can be written in terms canonical bases as
\be
\nsbr{\epsilon}_+ \cong \field \sum_{Q \in \text{SYT}(\lambda)} C_Q \tsr \C_Q, \quad\quad \nsbr{\epsilon}_- \cong \field \sum_{Q \in \text{SYT}(\lambda)} (-1)^{\ell(Q)} C_Q \tsr C_{\transpose{Q}},
\ee
where $\transpose{Q}$ denotes the transpose of the SYT $Q$ and $\ell(Q)$ denotes the distance between $Q$ and some fixed tableau of shape  $\lambda$ in the dual Knuth equivalence graph on SYT$(\lambda)$.

An important part of understanding the nonstandard Hecke algebra is to understand its trivial and sign representations. If we fix a basis of  $\field (\H_r \tsr \H_r)$, say $\{C_v \tsr C_w:v,w \in \S_r\}$, then  expressing the central idempotent for $ \nsbr{\epsilon}_-$ in this basis involves understanding  $\liftT$ and expressing the central idempotent for $ \nsbr{\epsilon}_+$ involves understanding  $\liftT$ and  $S(\lambda)$.  The same difficulties come up if we choose the basis $\{C_v \tsr \C_w:v,w \in \S_r\}$. Admittedly, $\nsbr{\epsilon}_+ \subseteq \Res_{\field \nsH_r}M_\lambda \tsr M_\lambda$ and $\nsbr{\epsilon}_- \subseteq \Res_{\field \nsH_r}M_\lambda \tsr M_{\lambda'}$ both have simple expressions in terms of the Hecke orthogonal basis of \cite{Wenzl}.  However, we suspect it will be useful to understand $\nsH_r$ in terms of a basis like $\{C_v \tsr C_w:v,w \in \S_r\}$.  This is somewhat justified by our work in progress \cite{BMSGCT4}, joint with Ketan Mulmuley and Milind Sohoni, in which we use canonical bases of quantum groups to give a combinatorial rule for Kronecker coefficients with two two-row shapes (here, we do not need $S(\lambda)$, but the projected upper canonical basis plays an essential role).


This paper is organized as follows. In \textsection\ref{s Preliminaries and notation}--\ref{s Crystal bases of the quantized enveloping algebra} we introduce the necessary background on canonical bases of Hecke algebras and quantum groups.  We then use this in  \textsection\ref{s Quantum Schur-Weyl duality and canonical bases} to construct canonical bases of $V^{\tsr r}$ and relate them to those of $\H_r$, closely following \cite{Brundan}.  Next, in  \textsection\ref{s projected canonical bases}, we give several characterizations of projected canonical basis elements, which we then use in  \textsection\ref{s Consequences for the canonical bases of M_lambda} to prove that the transition matrices $S(\lambda),T(\lambda)$, and $T'(\lambda)$ are the identity at $\u=0$ and $\u = \infty$. Finally, in  \textsection\ref{s The two-row case}, we compute explicitly a matrix similar to $T'(\lambda)$, for $\lambda$ a two-row shape, using the  $U_q(\sl_2)$ graphical calculus of \cite{FK}.
\section{Preliminaries and notation}
Here we introduce notation for general Coxeter groups and then specialize to the weight lattice and Weyl group of $\gl_n$. In preparation for quantum Schur-Weyl duality, we introduce notation for words and tableaux.  Finally, we define cells in the general setting of modules with basis, rather than only for $W$-graphs.
\label{s Preliminaries and notation}
\subsection{General notation}
We work primarily over the ground rings $\mathbf{A} = \ZZ[\u, \ui]$ and $\field = \QQ(\u)$. Define $\field_0$ (resp. $\field_\infty$) to be the subring of $\field$ consisting of rational functions with no pole at $\u = 0$ (resp. $\u = \infty$).

Let $\br{\cdot}$ be the involution of $\field$ determined by $\br{u} = \ui$; it restricts to an involution of $\mathbf{A}$.
For a nonnegative integer $k$, the $\br{\cdot}$-invariant quantum integer is $[k] := \frac{\u^k - \u^{-k}}{\u - \ui} \in \mathbf{A}$ and the quantum factorial is $[k]! := [k][k-1]\dots[1]$.  We also use the notation $[k]$ to denote the set $\{1,\ldots,k\}$, but these usages should be easy to distinguish from context.

Let $(W, S)$ be a Coxeter group with length function $\ell$ and Bruhat order $<$. If $\ell(vw)=\ell(v)+\ell(w)$, then $vw = v\cdot w$ is a \emph{reduced factorization}. The \emph{right descent set} of $w \in W$ is $R(w) = \{s\in S : ws < w\}$.

For any $J\subseteq S$, the \emph{parabolic subgroup} $W_J$ is the subgroup of $W$ generated by $J$. Each left (resp. right) coset $wW_J$ (resp. $W_Jw$) contains a unique element of minimal length called a minimal coset representative. The set of all such elements is denoted $W^J$ (resp. $\leftexp{J}W$).

\subsection{Words and tableaux}
\label{ss type A combinatorics preliminaries}
Our results depend heavily on quantum Schur-Weyl duality, so we work almost entirely in type $A$.
The \emph{weight lattice} $X$ of the Lie algebra $\gl_n$ is $\ZZ^n$ with standard basis $\epsilon_1, \dots, \epsilon_{n}$. Its dual, $\dual{X}$, has basis $\dual{\epsilon}_{1}, \dots, \dual{\epsilon}_{n}$ dual to the standard. The simple roots are $\alpha_i = \epsilon_i - \epsilon_{i+1}, i \in [n-1]$.
We write $\lambda \vdash_l r$ for a partition $\lambda = (\lambda_1, \ldots, \lambda_l)$ of size $r = |\lambda| := \sum_{i=1}^l \lambda_i$. A partition $\lambda \vdash_n r$ is identified with the weight $\lambda_1\epsilon_1 + \dots + \lambda_n\epsilon_n \in X$.

For  $\zeta = (\zeta_1,\ldots,\zeta_l)$ a weak composition of $r$, let   $B_j$ be the interval $[\sum_{i=1}^{j-1}\zeta_i+1,\sum_{i=1}^{j}\zeta_i]$, $j \in [l]$.  Define  $J_\zeta = \{s_i:i, i +1\in B_j \text{ for some } j\}$ so that $(\S_r)_{J_\zeta} \cong \S_{\zeta_1} \times \dots \times \S_{\zeta_{l}}$.

%

Let $\mathbf{k} = k_1 k_2\dots k_r \in [n]^r$ be a word of length $r$ in the alphabet $[n]$. The \emph{content} of $\mathbf{k}$ is the tuple $(\zeta_1,\ldots,\zeta_n)$ whose  $i$-th entry $\zeta_i$ is the number of  $i$'s in $\mathbf{k}$.  The notation $\mathbf{k}^\dagger$ denotes the word $k_r k_{r-1} \dots k_1$.
The symmetric group $\S_r$ acts on $[n]^r$ on the right by $\mathbf{k} s_i = k_1 \dots k_{i-1} \,  k_{i+1} \, k_i \, k_{i+2} \dots k_r$.
Define $\sort(\mathbf{k})$ to be the tuple obtained by rearranging the $k_j$ in weakly increasing order.
For a word $\mathbf{k}$ of content $\zeta$, define $d(\mathbf{k})$ (resp.  $D(\mathbf{k})$) to be the element  $w$ of $\leftexp{J_\zeta}{\S_r}$ (resp.  $(w_0)_{J_\zeta} \leftexp{J_\zeta}{\S_r}$ where $(w_0)_{J_\zeta}$ is the longest element of $(\S_r)_{J_\zeta}$) such that $\sort(\mathbf{k})w = \mathbf{k}$.

The set of standard Young tableaux is denoted SYT, those SYT of size $r$ denoted SYT$^r$, those SYT$^r$ with at most $n$ rows denoted SYT$^r_{\leq n}$, and those SYT of shape $\lambda$ denoted SYT$(\lambda)$. The set of semistandard Young tableaux of size $r$ with entries in $[n]$ is denoted SSYT$_{[n]}^r$ and the subset of SSYT$_{[n]}^r$ of shape $\lambda \vdash r$ is SSYT$_{[n]}^r(\lambda)$.
Tableaux are drawn in English notation, so that entries of a SSYT strictly increase from north to south along columns and weakly increase from west to east along rows. For a tableau $T$, $|T|$ is the number of squares in $T$ and $\sh(T)$ its shape.

We let $P(\mathbf{k}), Q(\mathbf{k})$ denote the insertion and recording tableaux produced by the Robinson-Schensted-Knuth (RSK) algorithm applied to the word $\mathbf{k}$.  We abbreviate $\sh(P(\mathbf{k}))$ simply by $\sh(\mathbf{k})$.
Let $Z_\lambda$ be the superstandard tableau of shape and content $\lambda$---the tableau whose $i$-th row is filled with $i$'s.
The conjugate partition $\lambda'$ of a partition $\lambda$ is the partition whose diagram is the transpose of that of $\lambda$ and
$\transpose{Q}$ denotes the transpose of a SYT $Q$, so that $\sh(\transpose{Q}) = \sh(Q)'$. Lastly, $Q^\dagger$ denotes the Sch\"utzenberger involution of a SYT $Q$ (see, e.g., \cite[A1.2]{F}).
\subsection{Cells}
\label{ss cells}
We define cells in the general setting of modules with basis.
Let  $H$ be an $R$-algebra for some commutative ring $R$.
Let $M$ be a left $H$-module and $\Gamma$ an  $R$-basis of  $M$. The preorder $\klo{\Gamma}$ (also denoted $\klo{M}$) on the vertex set $\Gamma$ is generated by the relations
\be
\label{e preorder}
\delta\klocov{\Gamma}\gamma \begin{array}{c}\text{if there is an $h\in H$ such that $\delta$ appears with non-zero}\\ \text{coefficient in the expansion of $h\gamma$
in the basis $\Gamma$}. \end{array}
\ee

Equivalence classes of $\klo{\Gamma}$ are the \emph{left cells} of $(M, \Gamma)$. The preorder $\klo{M}$ induces a partial order on the left cells of $M$, which is also denoted $\klo{M}$.

A \emph{cellular submodule} of $(M, \Gamma)$ is a submodule of $M$ that is spanned by a subset of $\Gamma$ (and is necessarily a union of left cells). A \emph{cellular quotient} of $(M,\Gamma)$ is a quotient of $M$ by a cellular submodule, and a \emph{cellular subquotient} of $(M, \Gamma)$ is a cellular quotient of a cellular submodule.
We denote a cellular subquotient $R\Gamma'/ R\Gamma''$ by $R\Lambda$, where $\Gamma'' \subseteq \Gamma' \subseteq \Gamma$ span cellular submodules and $\Lambda = \Gamma' \setminus \Gamma''$.
We say that the left cells  $\Lambda$ and  $\Lambda'$ are isomorphic if $(R \Lambda, \Lambda)$ and $(R \Lambda', \Lambda')$ are isomorphic as modules with basis.

Sometimes we speak of the left cells of $M$, cellular submodules of $M$, etc. or left cells of $\Gamma$, cellular submodules of $\Gamma$, etc. if the pair $(M, \Gamma)$ is clear from context.
For a right $H$-module  $M$, the \emph{right cells}, \emph{cellular submodules}, etc. of  $M$ are defined similarly with $\gamma h$ in place of $h \gamma$ in \eqref{e preorder}. We also use the terminology $H$-cells, $H$-cellular submodules, etc. to make it clear that the algebra $H$ is acting, and we omit left and right when they are clear.
\section{Hecke algebras and canonical bases}
\label{s Canonical bases of the type A Hecke algebra}
The \emph{Hecke algebra} $\H(W)$ of $(W, S)$ is the free $\mathbf{A}$-module with standard basis $\{T_w :\ w\in W\}$ and relations generated by
\be \label{e Hecke algebra def} \begin{array}{ll}T_vT_w = T_{vw} & \text{if } vw = v\cdot w\ \text{is a reduced factorization},\\
(T_s - \u)(T_s + \ui) = 0 & \text{if } s\in S.\end{array}\ee

For each $J\subseteq S$, $\H(W)_J$ denotes the subalgebra of $\H(W)$ with $\mathbf{A}$-basis $\{T_w:\ w\in W_J\}$, which is isomorphic to $\H(W_J)$.

In this section we recall the definition of the Kazhdan-Lusztig basis elements $C_w$ and $\C_w$ of \cite{KL} and some of their basic properties. We record some useful results about how they behave under induction and restriction.  Then we specialize to type  $A$ and review the beautiful connection between cells and the RSK algorithm.

\subsection{The upper and lower canonical basis of $\H(W)$}
The \emph{bar-involution}, $\br{\cdot}$, of $\H(W)$ is the additive map from $\H(W)$ to itself extending the $\br{\cdot}$-involution of $\mathbf{A}$ and satisfying $\br{T_w} = T_{w^{-1}}^{-1}$. Observe that $\br{T_{s}} = T_s^{-1} = T_s + \ui - u$ for $s \in S$. Some simple $\br{\cdot}$-invariant elements of $\H(W)$ are $\C_\text{id} := T_\text{id}$, $C_s := T_s - \u = T_s^{-1} - \ui$, and $\C_s := T_s + \ui = T_s^{-1} + u$, $s\in S$.

Define the lattices $\H(W)_{\ZZ[\u]} := \ZZ[\u] \{ T_w : w \in W \}$ and $\H(W)_{\ZZ[\ui]} := \ZZ[\ui] \{ T_w : w \in W \}$ of $\H(W)$.
\refstepcounter{equation}
\begin{enumerate}[label={(\theequation)}]
\item For each $w \in W$, there is  a unique element $C_w \in \H(W)$ such that $\br{C_w} = C_w$ and $C_w$ is congruent to $T_w \mod \u \H(W)_{\ZZ[\u]}$.
\end{enumerate}
The $\mathbf{A}$-basis $\Gamma_W := \{C_w: w\in W\}$
is the \emph{upper canonical basis} of $\H(W)$ (we use this language to be consistent with that for crystal bases).
Similarly,
\refstepcounter{equation}
\begin{enumerate}[label={(\theequation)}]
\item  for each $w \in W$, there is  a unique element $\C_w \in \H(W)$ such that $\br{\C_w} = \C_w$ and $\C_w$ is congruent to $T_w \mod \ui \H(W)_{\ZZ[\ui]}$.
\end{enumerate}
The $\mathbf{A}$-basis
$\Gamma'_W := \{\C_w : w \in W \}$ is the \emph{lower canonical basis} of $\H(W)$.

The coefficients of the lower canonical basis in terms of the standard basis are the \emph{Kazhdan-Lusztig polynomials} $P'_{x,w}$:
\be \C_w = \sum_{x \in W} P'_{x,w} T_x. \ee
(Our $P'_{x,w}$ are equal to $q^{(\ell(x)-\ell(w))/2}P_{x,w}$, where $P_{x,w}$ are the polynomials defined in \cite{KL} and $q^{1/2} = \u$.)
Now let $\mu(x,w) \in \ZZ$ be the coefficient of $\ui$ in $P'_{x,w}$ (resp. $P'_{w,x}$) if $x \leq w$ (resp. $w \leq x$).
Then the right regular representation in terms of the canonical bases of $\H(W)$ takes the following simple forms:
\begin{equation}\label{e prime C on prime canbas}
\C_w \C_s =
\left\{\begin{array}{ll} [2] \C_w & \text{if}\ s \in R(w),\\
\displaystyle\sum_{\substack{\{w' \in W: s \in R(w')\}}} \mu(w',w)\C_{w'} & \text{if}\ s \notin R(w).
\end{array}\right.
\end{equation}
\begin{equation}\label{e C on canbas}
C_w C_s =
\left\{\begin{array}{ll} -[2] C_w & \text{if}\ s \in R(w),\\
\displaystyle\sum_{\substack{\{w' \in  W: s \in R(w')\}}} \mu(w',w)C_{w'} & \text{if}\ s \notin R(w).
\end{array}\right.
\end{equation}

The simplicity and sparsity of this action along with the fact that the right cells of $\Gamma_W$ and $\Gamma'_W$ often give rise to  $\CC(\u) \tsr_\mathbf{A} \H(W)$-irreducibles are among the most amazing and useful properties of canonical bases.
\subsection{Induction and restriction of canonical bases} \label{ss Induction and restriction of canonical bases}
It will be important for our applications in \textsection \ref{s Quantum Schur-Weyl duality and canonical bases}--\ref{s Consequences for the canonical bases of M_lambda} that canonical bases behave well under induction and restriction.

Let $J \subseteq S$. Let $\mathbf{A}\Lambda'$ (resp. $\mathbf{A}\Lambda$) be a right cellular subquotient of $\Gamma'_{W_J}$ (resp. $\Gamma_{W_J}$). The next proposition follows from general results about inducing $W$-graphs \cite{HY1, HY2} (see \cite[Propositions 2.6 and 3.4]{B0}). We will only apply this with $\mathbf{A}\Lambda'$ (resp. $\mathbf{A}\Lambda$) the trivial $\H(W)$ representation, which is a cellular submodule (resp. quotient) of $\Gamma'_W$ (resp. $\Gamma_W$).
\begin{proposition}\label{p cell isomorphism induced}
The basis $\Gamma'_{\Lambda', J} := \{ \C_w : w = v\cdot x, \C_v \in \Lambda', x \in \leftexp{J}{W} \} \subseteq \Gamma'_W$ of  $\mathbf{A} \Lambda' \tsr_{\H(W_J)} \H(W)$ can be constructed from the standard basis $\Lambda T' := \{ \C_v \tsr_{\H(W_J)} T_x : \C_v \in \Lambda', x \in \leftexp{J}{W} \}$ in the sense of \cite{Du}:
 $\C_{v x}$ is the unique $\br{\cdot}$-invariant element of  $\ZZ[\ui] \Lambda T'$ congruent to  $\C_v \tsr_{\H(W_J)} T_x \mod \ui \ZZ[\ui] \Lambda T'$.
Hence,  $\mathbf{A} \Gamma'_{\Lambda',J}$  is a right cellular subquotient of $\H(W)$.
 The same statement holds with $\Lambda$ in place of $\Lambda'$,  $\Gamma_W$ in place of $\Gamma'_W$, $C$'s in place of $\C$'s, and $\u$ in place of $\ui$.
\end{proposition}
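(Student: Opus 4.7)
The plan is to apply Du's general criterion for producing a canonical basis from a bar-invariant ``standard basis,'' with the heavy lifting done by the Howlett--Yin $W$-graph induction results cited in the statement. The argument has four steps.

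First, I would define a bar involution on $N := \mathbf{A}\Lambda' \tsr_{\H(W_J)} \H(W)$ by the formula $\br{m \tsr h} := \br{m} \tsr \br{h}$, where $\br{\cdot}$ on $\Lambda'$ is the restriction of the Hecke bar involution on $\H(W_J)$ (so $\br{\C_v} = \C_v$ for $\C_v \in \Lambda'$). Well-definedness on the balanced tensor product follows from the multiplicativity of bar on $\H(W_J)$: for $h_J \in \H(W_J)$, one has $\br{m h_J \tsr h} = \br{m}\,\br{h_J} \tsr \br{h} = \br{m} \tsr \br{h_J}\,\br{h} = \br{m \tsr h_J h}$. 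By construction this bar involution is semilinear with respect to the bar on $\mathbf{A}$ and is compatible with the right $\H(W)$-action in the sense required by Du.

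Second, I would verify that bar is upper-unitriangular in the standard basis $\Lambda T'$ with respect to an appropriate partial order (combining the cell order on $\Lambda'$ with Bruhat length on $\leftexp{J}W$). This is the main technical point. Using the right coset decomposition $W = \bigsqcup_{x \in \leftexp{J}W} W_J\cdot x$, so that $vx$ is reduced for $v \in W_J$ and $x \in \leftexp{J}W$, one expands $\br{T_x} = \sum_{y \leq x} a_{y,x} T_y$ with $a_{x,x} = 1$, writes each $y$ as $y_J \cdot y^J$ with $y_J \in W_J$, $y^J \in \leftexp{J}W$, and checks that $\ell(y^J) \leq \ell(x)$ with equality only if $y = x$. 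Combined with $\br{\C_v} = \C_v$ and the cellular expansion of $\br{\C_v}\cdot(\text{junk}) \in \mathbf{A}\Lambda' \cdot \H(W_J)$, this shows $\br{\C_v \tsr T_x} \equiv \C_v \tsr T_x$ modulo strictly smaller basis elements of $\Lambda T'$. This is exactly the Howlett--Yin induction verification \cite{HY1, HY2}; see \cite[Propositions 2.6 and 3.4]{B0}.

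Third, Du's theorem then produces, for each $(v,x)$, a unique bar-invariant element $\liftC_{v,x} \in \ZZ[\ui]\Lambda T'$ congruent to $\C_v \tsr T_x$ modulo $\ui\ZZ[\ui]\Lambda T'$, and these $\liftC_{v,x}$ form an $\mathbf{A}$-basis of $N$. Finally, to identify this basis with the stated subset of $\Gamma'_W$, I would use the $\H(W)$-equivariant map $N \to \H(W)/I$ (with $I$ the cellular ideal corresponding to the submodule of $\Gamma'_{W_J}$ killed in $\Lambda'$) sending $\C_v \tsr T_x \mapsto \C_v T_x \bmod I$. Since $vx$ is reduced, $\C_v T_x \equiv T_{vx}$ modulo $\ui$ times the $\ZZ[\ui]$-lattice of $\H(W)$, so its image is bar-invariant and congruent to $T_{vx}$ in the same sense; by the defining uniqueness of $\C_{vx} \in \Gamma'_W$, the image equals $\C_{vx} \bmod I$. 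Hence $\liftC_{v,x} \mapsto \C_{vx}$, and $\Gamma'_{\Lambda',J}$ is indeed a right cellular subquotient. The parallel statement for $\Gamma_W$ follows by the symmetric argument with $\u$ replacing $\ui$ and the upper canonical basis in place of the lower.

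The main obstacle is the second step: controlling the interaction between the bar involution and the $W_J$-$\leftexp{J}W$ coset decomposition. This is precisely the combinatorial content of the Howlett--Yin theorems, so in practice the proof is largely a citation; the only real work here is translating their $W$-graph formalism into the language of cellular subquotients used in Section~\ref{ss cells}.
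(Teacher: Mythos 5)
Your proof reconstructs the argument behind the paper's citation faithfully: the paper itself gives no proof, simply pointing to Howlett--Yin $W$-graph induction \cite{HY1,HY2} and \cite[Propositions 2.6 and 3.4]{B0}, and your four steps (bar involution on the induced module via the quasi-coproduct of bar, unitriangularity in the standard basis via the $W_J$--$\leftexp{J}W$ decomposition, Du's uniqueness, and identification with $\C_{vx}$ via the multiplication map into a cellular subquotient of $\H(W)$) are precisely the content of those references. One small wording issue in step four: the image of $\C_v \tsr T_x$, namely $\C_v T_x \bmod I$, is \emph{not} itself bar-invariant; it is the image of the Du element $\liftC_{v,x}$ that is bar-invariant (by equivariance) and congruent to $T_{vx}$ modulo $\ui$ times the $\ZZ[\ui]$-lattice, which then pins it down as $\C_{vx} \bmod I$ by uniqueness -- your conclusion is right, but the causal ``so'' in that sentence reads as if the congruence of $\C_v T_x$ implied bar-invariance, which it does not.
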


The next result about restricting canonical bases originated in the work of Barbasch and Vogan on primitive ideals \cite{BV}, and is proven in the generality stated here by Roichman \cite{R} (see also \cite[\textsection3.3]{B0}).
\begin{proposition}\label{p restrict Wgraph}
Let $J \subseteq S$ and $E$ be the right $\H(W_J)$-module $\Res_{\H(W_J)} \H(W)$. Then for any $x \in W^J$,  $E_x := \mathbf{A} \{\C_{xv} : v \in W_J \}$ is a cellular subquotient of  $(E,\Gamma'_W)$ and
\be E_x \xrightarrow{\cong} \H(W_J), \C_{xv} \mapsto \C_v \ee
is an isomorphism of right $\H(W_J)$-modules with basis. In particular, any right cell of $(E,\Gamma'_W)$ is isomorphic to one occurring in $\H(W_J)$. The same statement holds for $(E,\Gamma_W)$, with $C$'s replacing $\C$'s.
\end{proposition}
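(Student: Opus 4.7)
The plan is to construct cellular submodules $\mathbf{A}\Gamma^{<x} \subseteq \mathbf{A}\Gamma^{\leq x} \subseteq \H(W)$ realizing $E_x = \mathbf{A}\Gamma^{\leq x}/\mathbf{A}\Gamma^{<x}$, and then to verify that the map $\C_{xv} \mapsto \C_v$ gives the asserted isomorphism of right $\H(W_J)$-modules with basis. Set $\Gamma^{\leq x}$ to be the downward closure of $\{\C_{xv} : v \in W_J\}$ under the right preorder $\klo{E}$; by construction $\mathbf{A}\Gamma^{\leq x}$ is an $\H(W_J)$-cellular submodule of $(E, \Gamma'_W)$. Put $\Gamma^{<x} := \Gamma^{\leq x} \setminus \{\C_{xv} : v \in W_J\}$. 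The content of the proposition is then that $\mathbf{A}\Gamma^{<x}$ is \emph{also} cellular, and that the induced right action on the quotient matches the regular representation of $\H(W_J)$ under $\C_{xv} \mapsto \C_v$.

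To verify this, I would apply formula \eqref{e prime C on prime canbas} to $\C_{xv}\C_s$ for $v \in W_J$, $s \in J$. Using $\ell(xv) = \ell(x)+\ell(v)$ together with $R(x) \cap J = \emptyset$ (which characterizes $x \in W^J$), one checks $s \in R(xv) \iff s \in R(v)$; so the $s \in R(v)$ branch gives $\C_{xv}\C_s = [2]\C_{xv}$, matching $\C_v \C_s = [2]\C_v$ inside $\H(W_J)$ on the nose. For $s \notin R(v)$, the formula gives $\C_{xv}\C_s = \sum_{w : s \in R(w)} \mu(w, xv)\C_w$, and two things need to be established: (i) any $\C_w$ appearing with $w$ \emph{not} of the form $xv'$ for some $v' \in W_J$ must lie in $\mathbf{A}\Gamma^{<x}$, so it vanishes in the quotient; and (ii) for the remaining terms, $\mu(xv', xv) = \mu(v', v)$, where the right-hand side is the $\mu$-coefficient computed inside $\H(W_J)$.

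The main obstacle is point (ii): the matching of Kazhdan--Lusztig $\mu$-coefficients across the parabolic embedding $\H(W_J) \hookrightarrow \H(W)$. My approach is induction on $\ell(x)$. The base case $x = e$ is immediate since $\C_{xv} = \C_v \in \H(W_J)$. For the inductive step, pick $s' \in S$ with $s'x < x$, factor $x = s' \cdot x'$ (noting $x' \in W^J$ by the exchange condition), and use \eqref{e prime C on prime canbas} to express $\C_{s'}\C_{x'v}$ as $\C_{xv}$ plus a combination of $\C_{w'}$ with $s' \in R(w')$; the induction hypothesis then pins down the $\mu$-structure of $\C_{xv}$ modulo the cellular submodule generated by these lower-order terms. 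An alternative route is to mimic the $\br{\cdot}$-invariance argument of Du used in Proposition \ref{p cell isomorphism induced}: define $\Phi(\C_v) := \C_{xv} \bmod \mathbf{A}\Gamma^{<x}$ and verify $\Phi(\C_v)\C_s = \Phi(\C_v\C_s)$ by identifying both sides as the unique bar-invariant element of $E_x$ with a prescribed reduction modulo $\ui$. Either way, the delicate point is showing that the extraneous $\C_w$ truly land in $\mathbf{A}\Gamma^{<x}$ rather than leaking into the $\{\C_{xv'}\}$-sector and spoiling the isomorphism — this is where one must exploit the full strength of the parabolic structure on $W$.
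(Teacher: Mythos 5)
The paper does not supply its own proof of this proposition; it cites Barbasch--Vogan, Roichman~\cite{R}, and~\cite[\S3.3]{B0}, so there is no argument in the text to compare against. Judged on its own terms, your proposal sets up a reasonable framework (take $\Gamma^{\leq x}$ to be the $\klo{E}$-downward closure of $\{\C_{xv}:v\in W_J\}$, put $\Gamma^{<x}=\Gamma^{\leq x}\setminus\{\C_{xv}\}$, verify the quotient action via \eqref{e prime C on prime canbas}), and the preliminary observations are correct: $x\in W^J$ gives $\ell(xv)=\ell(x)+\ell(v)$ and $R(xv)\cap J=R(v)$, so the $s\in R(v)$ branch matches on the nose. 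But the two facts you isolate as ``to be established'' are precisely the substance of the proposition, and neither is actually proved.

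For your point (ii), the identity $\mu(xv',xv)=\mu(v',v)$: this does hold, and the cleanest route is not the induction you sketch but rather iterated application of Kazhdan--Lusztig's reduction $P_{sy,sw}=P_{y,w}$ when $s\in L(y)\cap L(w)$, working down a reduced word for $x$ (using that each suffix of a reduced word for $x\in W^J$ again lies in $W^J$, so $s_i\in L(x_i v)\cap L(x_i v')$ at every stage). Your induction sketch --- ``the induction hypothesis then pins down the $\mu$-structure of $\C_{xv}$ modulo the cellular submodule generated by these lower-order terms'' --- is too vague to be checked, because the lower-order terms produced by $\C_{s'}\C_{x'v}$ are not controlled by the induction hypothesis alone; you would have to simultaneously track which cosets they fall into, which is entangled with point (i). The Du-style bar-invariance alternative has the same circularity: to even define the bar-involution on $E_x$ you need $\mathbf{A}\Gamma^{<x}$ to already be a submodule.

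For your point (i), you explicitly concede the gap: ``the delicate point is showing that the extraneous $\C_w$ truly land in $\mathbf{A}\Gamma^{<x}$ rather than leaking into the $\{\C_{xv'}\}$-sector \ldots this is where one must exploit the full strength of the parabolic structure on $W$.'' That sentence is a correct diagnosis, not an argument. What it requires is exactly the assertion that $\{\C_{xv}:v\in W_J\}$ is a union of right $\H(W_J)$-cells of $\Gamma'_W$, which is the theorem of Roichman that the paper is invoking. In particular, one must rule out the scenario where some $\C_w$ with $w\notin xW_J$, $\C_w\in\Gamma^{\leq x}$, has $\mu(xv',w)\neq 0$ with $s\in R(xv')\setminus R(w)$ for some $s\in J$; nothing in your outline addresses this. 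So the proposal reduces the proposition to two lemmas and proves neither; the second lemma is essentially a restatement of the proposition itself.
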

\subsection{Cells in type  $A$} \label{ss cell label conventions C_Q C'_Q}
Let  $\H_r = \H(\S_r)$ be the type $A$ Hecke algebra.

It is well known that $\field \H_r := \field \tsr_\mathbf{A} \H_r$ is semisimple and its irreducibles in bijection with partitions of $r$; let $M_\lambda$ and $M_\lambda^{\mathbf{A}}$ be the $\field\H_r$-irreducible and Specht module of $\H_r$ of shape $\lambda \vdash r$ (hence $M_\lambda \cong \field\tsr_\mathbf{A} M_\lambda^\mathbf{A}$). For any $\field \H_r$-module $N$ and partition $\lambda$ of $r$, let $N[\lambda]$ be the $M_\lambda$-isotypic component of $N$.
Let $s_\lambda^N : N \twoheadrightarrow N[\lambda]$ be the canonical surjection and $i_\lambda^N : N[\lambda] \hookrightarrow N$ the canonical inclusion. Define the projector $p_\lambda^N: N \to N$ by $p_\lambda^N = i_\lambda^N \circ s_\lambda^N$.   We also let $p_\lambda$ denote central idempotent of $\field \H_r$ so that the map $p_\lambda^N$ is given by multiplication by $p_\lambda$.

The work of Kazhdan and Lusztig \cite{KL} shows that the decomposition of $\Gamma_{\S_r}$ into right cells is
$\Gamma_{\S_r} = \bigsqcup_{P \in \text{SYT}^r} \Gamma_P$, where $\Gamma_P := \{C_w : P(w) = P\}$.  Moreover, the right cells $\{ \Gamma_P : \sh(P) = \lambda\}$ are all isomorphic, and, denoting any of these cells by $\Gamma_\lambda$, $\mathbf{A}\Gamma_\lambda \cong M_\lambda^\mathbf{A}$. Similarly, the decomposition of $\Gamma'_{\S_r}$ into right cells is
$\Gamma'_{\S_r} = \bigsqcup_{P \in \text{SYT}^r} \Gamma'_P$, where $\Gamma'_P := \{\C_w : \transpose{P(w)} = P\}$.  Moreover, the right cells $\{ \Gamma'_P : \sh(P) = \lambda\}$ are all isomorphic, and, denoting any of these cells by $\Gamma'_\lambda$, $\mathbf{A}\Gamma'_\lambda \cong M_\lambda^\mathbf{A}$.
A combinatorial discussion of left cells in type $A$ is given in \cite[\textsection 4]{B0}.

We refer to the basis $\Gamma_\lambda$ of $M_\lambda^\mathbf{A}$ as the \emph{upper canonical basis of $M_\lambda$} and denote it by $\{ C_Q : Q \in \text{SYT}(\lambda) \}$, where $C_Q$ corresponds to $C_w$ for any (every) $w \in \S_r$ with recording tableau $Q$. Similarly, the basis $\Gamma'_\lambda$ of $M_\lambda^\mathbf{A}$ is the \emph{lower canonical basis of $M_\lambda$}, denoted $\{ \C_Q : Q \in \text{SYT}(\lambda) \}$, where $\C_Q$ corresponds to $\C_w$ for any (every) $w \in \S_r$ with recording tableau $\transpose{Q}$. Note that with these labels the action of $C_s$ on the upper canonical basis of $M_\lambda$ is similar to \eqref{e C on canbas}, with  $\mu(Q',Q):= \mu(w',w)$ for any  $w',w$ such that  $P(w')=P(w)$, $Q' = Q(w'),Q = Q(w)$, and right descent sets
\be
R(C_Q) = \{ s_i : i + 1 \text{ is strictly to the south of $i$ in $Q$}\}.
\ee
Similarly, the action of $\C_s$ on $\{ \C_Q : Q \in \text{SYT}(\lambda) \}$ is similar to \eqref{e prime C on prime canbas}, with  $\mu(Q',Q):= \mu(w',w)$ for any  $w',w$ such that  $\transpose{P(w')}=\transpose{P(w)}$, $Q' = \transpose{Q(w')},Q = \transpose{Q(w)}$, and right descent sets
\be
R(\C_Q) = \{ s_i : i + 1 \text{ is strictly to the east of $i$ in $Q$}\}.
\ee

\begin{example} \label{ex lambda31}
The integers $\mu(Q',Q)$ for both the upper and lower canonical basis of $M_{(3,1)}$ are given by the following graph ($\mu$ is 1 if the edge is present and 0 otherwise)
\[
\xymatrix@R=0cm{
\ \tableau{1&2&3\\4} \ar @{-} [r] & \ \tableau{1&2&4\\3} \ar @{-} [r] & \ \tableau{1&3&4\\2} \\
Q_4 & Q_3 & Q_2
}
\]
The right action of the $\C_s$ on $(\C_{Q_4}, \C_{Q_3}, \C_{Q_2})$ is given by (the columns of the matrices are $\C_{Q} \C_s$ in terms of the $\C_Q$-basis)
\[
\C_{s_1} \mapsto \begin{pmatrix}
[2] & 0 & 0 \\
0 & [2] & 1 \\
0 & 0 & 0
\end{pmatrix} \quad
\C_{s_2} \mapsto \begin{pmatrix}
[2] & 1 & 0 \\
0 & 0 & 0 \\
0 & 1 & [2]
\end{pmatrix} \quad
\C_{s_3} \mapsto \begin{pmatrix}
0 & 0 & 0 \\
1 & [2] & 0 \\
0 & 0 & [2]
\end{pmatrix}
\]
The right action of the $\C_s$ on $(C_{Q_4}, C_{Q_3}, C_{Q_2})$ is given by
\[
\C_{s_1} \mapsto \begin{pmatrix}
[2] & 0 & 0 \\
0 & [2] & 0 \\
0 & 1 & 0
\end{pmatrix} \quad
\C_{s_2} \mapsto \begin{pmatrix}
[2] & 0 & 0 \\
1 & 0 & 1 \\
0 & 0 & [2]
\end{pmatrix} \quad
\C_{s_3} \mapsto \begin{pmatrix}
0 & 1 & 0 \\
0 & [2] & 0 \\
0 & 0 & [2]
\end{pmatrix}
\]
Note that the matrix corresponding to the right action of $\C_s$ on the $\C_Q$-basis is transpose to the action in the $C_Q$-basis. This is true in general---it is a consequence of Proposition \ref{p dual bases Mlambda} or of \cite[Corollary 3.2]{KL}.
\end{example}

The partial orders  $\klo{\Gamma_{\S_r}}$ and  $\klo{\Gamma'_{\S_r}}$ are not well understood, but there is the following deep result which gives us some understanding.  The result follows from Lusztig's $a$-invariant and the nonnegativity of the structure constants of the $\C_w$ due to Beilinson-Bernstein-Deligne-Gabber \cite[\textsection 5-6]{L2} and results of \cite{BV} and \cite{Joseph} on primitive ideals of $\Uq$ (see the appendix of \cite{Himmanant}).
\begin{theorem}\label{t right cell partial order}
The partial order on the right cells of $\Gamma_{\S_r}$ and $\Gamma'_{\S_r}$ is constrained by dominance order: if $\Gamma_{P'} \kloneq{\Gamma_{\S_r}} \Gamma_{P}$, then $\sh(P') \ldneq \sh(P)$; if $\Gamma'_{P'} \kloneq{\Gamma'_{\S_r}} \Gamma'_{P}$, then $\sh(P') \gdneq \sh(P)$.
\end{theorem}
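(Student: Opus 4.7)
The plan is to deduce Theorem \ref{t right cell partial order} from Lusztig's theory of the $a$-function together with the identification of the two-sided cell preorder on $\S_r$ with dominance order on partitions.

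First I would invoke Kazhdan-Lusztig positivity of the structure constants of the $\C$-basis (Beilinson-Bernstein-Deligne-Gabber, \cite[\textsection 5-6]{L2}). This positivity is precisely the input needed to force Lusztig's $a$-function $a : \S_r \to \ZZ_{\geq 0}$ to satisfy: (a) $a$ is constant on two-sided cells and weakly decreasing along the right cell preorder; and (b) Lusztig's property (P9), which says that if $w \leq_R w'$ and $a(w) = a(w')$ then $w \sim_R w'$. Contrapositively, a strict right-cell inequality forces a strict inequality of $a$-values, and hence places the two elements in distinct two-sided cells. The algebra automorphism $\sigma$ of $\H_r$ with $\sigma(\C_w) = (-1)^{\ell(w)} C_w$ transfers all of these statements between the $\Gamma'_{\S_r}$-preorder and the $\Gamma_{\S_r}$-preorder.

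Next I would use the classical identification, for $\S_r$, of the $a$-value on the two-sided cell labeled by $\lambda$ with $n(\lambda) := \sum_i (i-1)\lambda_i$ (one checks this on the identity and on $w_0$ and uses that $a$ is constant on two-sided cells), and then invoke the theorem of Barbasch-Vogan \cite{BV} and Joseph \cite{Joseph} on primitive ideals in $U(\sl_n)$: the associated variety attached to the two-sided cell labeled by $\lambda$ is the closure of the nilpotent orbit of Jordan type $\lambda$, and in $\gl_n$ these orbit closures are partially ordered by dominance. This identifies the two-sided cell preorder on $\S_r$ with dominance order on partitions, with the bridge between primitive ideals in $U(\sl_n)$ and Kazhdan-Lusztig cells being the one recorded in the appendix of \cite{Himmanant}. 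Putting these together: a strict inequality $\Gamma_{P'} \kloneq{\Gamma_{\S_r}} \Gamma_P$ by (a)--(b) places the two right cells in distinct two-sided cells with strict two-sided cell inequality, which by the identification above becomes $\sh(P') \ldneq \sh(P)$. For the $\C$-case, $\sigma$ identifies the $\Gamma'_{\S_r}$-preorder with the $\Gamma_{\S_r}$-preorder but transposes the shape labels (since $\C_Q$ is indexed by a shape conjugate to that of $C_Q$); because transposition reverses dominance, the conclusion flips to $\sh(P') \gdneq \sh(P)$, as required.

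The main obstacle is the clean invocation of the Barbasch-Vogan--Joseph classification: pinning down the bijection between Kazhdan-Lusztig two-sided cells and partitions of $r$, and matching the two-sided cell preorder with dominance order, requires a nontrivial dictionary between two-sided ideals of $U(\sl_n)$ and cells in $\H_r$. The $a$-function half of the argument, by contrast, is essentially formal once BBDG non-negativity is taken as a black box; the only subtle point there is property (P9), without which one could only conclude a weak inequality of $a$-values and would not be able to separate distinct two-sided cells.
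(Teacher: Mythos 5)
Your proposal follows essentially the same route the paper sketches: the paper does not give a detailed proof but attributes the result to Lusztig's $a$-invariant together with BBDG non-negativity of the $\C_w$-structure constants and the Barbasch--Vogan/Joseph results on primitive ideals, and you have correctly expanded this into a proof (your ``(P9)'' is Lusztig's (P10) in the right-cell version, and the digression through $n(\lambda)$ is not really needed once you invoke the BV--Joseph identification of the two-sided cell order with dominance, but neither point affects correctness).
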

\section{The quantized enveloping algebra and crystal bases}
\label{s Crystal bases of the quantized enveloping algebra}
We recall the definition of the quantized enveloping algebra $\Uq = U_q(\gl_n)$ following \cite{Kas1,HK}. We then briefly recall the construction of global crystal bases in the sense of \cite{Kas1,Kas2} and of the similar notion of based modules of \cite{LBook}.

\subsection{Definition of $\Uq = U_q(\gl_n)$ and basic properties}
The \emph{quantized universal enveloping algebra} $\Uq$ is the associative $\field$-algebra generated by
$q^h, h \in \dual{X}$ (set $K_i = q^{\dual{\epsilon}_{i}-\dual{\epsilon}_{i+1}}$) and $E_i, F_i, i \in [n - 1]$ with relations
\be
\begin{array}{ll}
q^0 = 1, & q^hq^{h'} = q^{h + h'},   \\
q^hE_iq^{-h} = \u^{\langle \alpha_i, h \rangle}E_i, & q^hF_iq^{-h} = \u^{-\langle \alpha_i, h \rangle}F_i, \\
E_iF_j - F_jE_i = \delta_{i,j} \frac{K_i - K_i^{-1}}{\u-\ui}, & \\
E_iE_j - E_jE_i = F_iF_j - F_jF_i = 0 & \text{for } |i-j| > 1, \\
E_i^2E_j-  [2]E_iE_jE_i + E_jE^2_i = 0 & \text{for } |i-j| = 1, \\
F^2_iF_j - [2]F_iF_jF_i + F_jF^2_i = 0 & \text{for } |i-j| = 1. \\
\end{array}
\ee
\begin{remark}
Our notation is related to that of Kashiwara's and Brundan's \cite{Brundan} by $\u = q$.  We use $\u$ instead of $q$ because on the Hecke algebra side, our $\u$ is what is usually $q^{1/2}$.
\end{remark}

The \emph{bar-involution}, $\br{\cdot}: \Uq \to \Uq$ is the  $\QQ$-linear automorphism extending the involution $\br{\cdot}$ on $\field$ and satisfying
\be \br{q^{h}} = q^{-h}, \ \br{E_i} = E_i, \ \br{F_i} = F_i. \ee
Let $\varphi : \Uq \to \Uq$ be the algebra antiautomorphism determined by
\be
\varphi(E_i) = F_i, \quad \varphi(F_i) = E_i, \quad \varphi(K_i) = K_i.
\ee

The algebra $\Uq$ is a Hopf algebra with coproduct $\Delta$ given by
\be \label{e U_q coproduct}
\Delta(q^h) = q^h \tsr q^h, \ \ \Delta(E_i) = E_i \tsr K_i^{-1} + 1 \tsr E_i, \ \ \Delta(F_i) = F_i \tsr 1 + K_i \tsr F_i.
\ee
This is the same as the coproduct used in \cite{Brundan, Kas2, HK}, and it differs from the coproduct $\tilde{\Delta}$ of \cite{LBook} by $(\varphi \tsr \varphi) \circ \Delta \circ \varphi$.

The \emph{weight space}  $N^\zeta$ of a $\Uq$-module  $N$ for the weight $\zeta \in X$ is the $\field$-vector space $\{x \in N : q^h x = \u^{\langle \zeta, h \rangle} x\}$.
Let $\Oint$ be as in \cite[Chapter 7]{HK}, the category of finite-dimensional $\Uq$-modules such that the weight of any non-zero weight space belongs to $\ZZ^n_{\geq 0} \subseteq X$. It is semisimple, the simple objects being the highest weight modules $V_\lambda$ for partitions $\lambda$.

For any object $N$ of $\Oint$ and partition $\lambda$, let $N[\lambda]$ be the $V_\lambda$-isotypic component of $N$. Set $N[\ld\lambda] = \bigoplus_{\mu \ld\lambda} N[\mu]$, $N[\ldneq\lambda] = \bigoplus_{\mu \ldneq\lambda} N[\mu]$, $N[\gd\lambda] = \bigoplus_{\mu \gd\lambda} N[\mu]$, and $N[\gdneq\lambda] = \bigoplus_{\mu \gdneq\lambda} N[\mu]$. Let $\varsigma_\lambda^N : N \twoheadrightarrow N[\lambda]$ be the canonical surjection and $\iota_\lambda^N : N[\lambda] \hookrightarrow N$ the canonical inclusion. Define the projector $\pi_\lambda^N: N \to N$ by $\pi_\lambda^N = \iota_\lambda^N \circ \varsigma_\lambda^N$.

\subsection{Crystal bases}

A lower crystal basis at  $\u = 0$ of an object $N$ of $\Oint$ is a pair  $(\L_0(N),\B')$, where $\L_0(N)$ is a $\field_0$-submodule of $N$ and $\B'$ is a $\QQ$-basis of $\L_0(N) / \u \L_0(N)$ which satisfy a certain compatibility with the Kashiwara operators $\crystall{E_i}, \crystall{F_i}$; an upper crystal basis at  $\u = \infty$ of $N$ is a  pair  $(\L_\infty(N),\B)$, where $\L_\infty(N)$ is a $\field_\infty$-submodule of $N$ and $\B$ is a $\QQ$-basis of $\L_\infty(N) / \ui \L_\infty(N)$ which satisfy a certain compatibility with the Kashiwara operators $\crystalu{E_i}, \crystalu{F_i}$ (see \cite[\textsection3.1]{Kas2}).

Kashiwara \cite{Kas2} gives a fairly explicit construction of a lower (resp. upper) crystal basis of $V_\lambda$, which we denote by $(\L_0(\lambda),\B'(\lambda))$ (resp. $(\L_\infty(\lambda),\B(\lambda))$). The basis  $\B'(\lambda)$ (resp.  $\B(\lambda)$) is naturally labeled by $\text{SSYT}_{[n]}(\lambda)$ and we let $b'_P$ (resp. $b_P$) denote the basis element corresponding to $P \in \text{SSYT}_{[n]}(\lambda)$ (see, for instance, \cite[Chapter 7]{HK}).
A fundamental result of \cite{Kas1,Kas2} is that a lower (resp. upper) crystal basis is always isomorphic to a direct sum  $\bigoplus_j (\L_0(\lambda^j),\B'(\lambda^j))$ (resp. $\bigoplus_j (\L_\infty(\lambda^j),\B(\lambda^j))$).

%


\subsection{Global crystal bases}
\label{ss global canonical bases}
We next define lower based modules and upper based modules, where a lower based module is a based module in the sense of \cite[Chapter 27]{LBook} adapted to our coproduct.

The \emph{$\mathbf{A}$-form $\Uq_\mathbf{A}$ of $\Uq$} is the $\mathbf{A}$-subalgebra of $\Uq$ generated by $ \frac{E_i^m}{[m]!},  \frac{F_i^m}{[m]!}, q^h, \genfrac{\{}{\}}{0pt}{}{q^{h}}{m}$ for $i \in [n-1],\ m \in \ZZ_{\geq 0}$, and $h \in \dual{X}$, where
\[\genfrac{\{}{\}}{0pt}{}{x}{m} := \prod^m_{k=1} \frac{\u^{1-k}x - \u^{k-1}x^{-1}}{\u^k-\u^{ -k}}. \]
We also define the \emph{$\QQA$-form $\Uq_\QQ$ of  $\Uq$} to be $\QQ \tsr_\ZZ \Uq_{\mathbf{A}}$.

\begin{definition}
A \emph{lower based module} is a pair $(N,B)$, where $N$ is an object of $\Oint$
and $B$ is a $\field$-basis of $N$ such that
\begin{list}{(\alph{ctr})} {\usecounter{ctr} \setlength{\itemsep}{1pt} \setlength{\topsep}{2pt}}
\item $B \cap N^{\zeta}$ is a basis of $N^\zeta$, for any $\zeta \in X$;
\item Define $N_\mathbf{A} := \mathbf{A} B$. The $\QQA$-submodule  $\QQ \tsr_\ZZ N_\mathbf{A}$ of $N$ is stable under  $\Uq_\QQ$;
\item the $\QQ$-linear involution $\br{\cdot} : N \to N$ defined by $\br{ab} = \br{a}b$ for all $a \in \field$ and all $b \in B$ intertwines the $\br{\cdot}$-involution of $\Uq$, i.e. $\br{fn} = \br{f}\br{n}$ for all $f \in \Uq, n \in N$;
\item Set $\L_0(N) = \field_0 B$ and let $\B$ denote the image of $B$ in $\L_0(N)/\u \L_0(N)$. Then $(\L_0(N), \B)$ is a lower crystal basis of $N$ at  $\u = 0$.
\end{list}
\end{definition}

\begin{definition}
An \emph{upper based module} is the same as a lower based module except with condition (d) replaced by
\begin{list}{\emph{(\alph{ctr})}} {\usecounter{ctr} \setlength{\itemsep}{1pt} \setlength{\topsep}{2pt}}
\item[] Set $\L_\infty(N) = \field_\infty B$ and let $\B$ denote the image of $B$ in $\L_\infty(N)/\ui \L_\infty(N)$. Then $(\L_\infty(N), \B)$ is an upper crystal basis of $N$ at $\u = \infty$.
\end{list}
\end{definition}
The  \emph{$\br{\cdot}$-involution} of the lower (resp.  upper) based module is the involution on $N$ defined in (c). The \emph{balanced triple} of a lower (resp.  upper) based module is $(\QQA B, \field_0B, \field_\infty B)$.

\begin{remark}
For simplicity and to be consistent with the treatment of upper global crystal bases in \cite{Kas2}, we have used the $\QQA$-form $\Uq_\QQ$ from \cite{Kas2} rather than the $\mathbf{A}$-form of $\dot{\Uq}$ defined in \cite{LBook}.
\end{remark}

\begin{remark}\label{r balanced triple}
In the language of Kashiwara \cite{Kas2}, the basis  $B$ in the definitions above is a lower or upper \emph{global crystal basis}.  Kashiwara defines a triple $(N_\QQ,\L_0,\L_\infty)$ to be \emph{balanced} if  the canonical surjection $N_\QQ \cap \L_0 \cap \L_\infty \to \L_0 / \u \L_0$ is an isomorphism, where  $N$ is any $\field$-vector space, and  $N_\QQ$,  $\L_0$,  $\L_\infty$ are any $\QQA$-submodule, $\field_0$-submodule, and $\field_\infty$-submodule  of  $N$, respectively.  To define global lower crystal bases, Kashiwara first defines a balanced triple  $(\QQ \tsr_\ZZ N_\mathbf{A}, \L_0(N),\br{\L_0(N)}) $ and a basis  $\B \subseteq \L_0 / \u \L_0$ and then defines $B$ to be the inverse image of  $\B$ under the isomorphism
\[ \QQ \tsr_\ZZ N_\mathbf{A} \cap \L_0(N) \cap \br{\L_0(N)}  \xrightarrow{\cong} \L_0 / \u \L_0.\]
Global upper canonical bases are defined similarly.
\end{remark}

Let $\eta_\lambda$ be a highest weight vector of $V_\lambda$.  The $\br{\cdot}$-involution on $V_\lambda$ is defined by setting $\br{\eta_\lambda} = \eta_\lambda$ and requiring that it intertwines the $\br{\cdot}$-involution of $\Uq$. The  $\QQA$-forms of  $V_\lambda$ of \cite{Kas2} are denoted $V^{\QQ \text{ low}}_\lambda$ and  $V^{\QQ \text{ up}}_\lambda$;  $V^{\QQ \text{ low}}_\lambda$ is defined to be  $\Uq_\QQ \eta_\lambda$  and $V^{\QQ \text{ up}}_\lambda$ is defined by dualizing  $V^{\QQ \text{ low}}_\lambda$ by a symmetric form on  $V_\lambda$.  We can now state the fundamental result about the existence of global crystal bases and based modules for $V_\lambda$.
\begin{theorem}[Kashiwara \cite{Kas1,Kas2}]
\label{t theorem 6.2.2 HK}
\
\begin{list}{\emph{(\roman{ctr})}} {\usecounter{ctr} \setlength{\itemsep}{1pt} \setlength{\topsep}{2pt}}
\item The triple $(V^{\QQ \text{ low}}_\lambda, \L_0(\lambda), \br{\L_0(\lambda)})$ is balanced.  Then, letting $G'_\lambda$ be the inverse of the canonical isomorphism
\[ V^{\QQ \text{ low}}_\lambda \cap \L_0(\lambda) \cap \br{\L_0(\lambda)}  \xrightarrow{\cong} \L_0(\lambda) / \u \L_0(\lambda),\]
 $B'(\lambda) := G'_\lambda(\B'(\lambda))$ is the \emph{lower global crystal basis of  $V_\lambda$} and $(V_\lambda,B'(\lambda))$ is a lower based module.
\item The triple $(V^{\QQ \text{ up}}_\lambda, \br{\L_\infty(\lambda)}, \L_\infty(\lambda))$ is balanced.  Then, letting $G_\lambda$ be the inverse of the canonical isomorphism
\[ V^{\QQ \text{ up}}_\lambda \cap \br{\L_\infty(\lambda)} \cap \L_\infty(\lambda)  \xrightarrow{\cong} \L_\infty(\lambda) / \ui \L_\infty(\lambda),\]
 $B(\lambda) := G_\lambda(\B(\lambda))$ is the \emph{upper global crystal basis of  $V_\lambda$} and $(V_\lambda,B(\lambda))$ is an upper based module.
 \end{list}
\end{theorem}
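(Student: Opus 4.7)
The plan is to prove part (i) via the grand loop induction of Kashiwara, then derive (ii) by duality using the contravariant bilinear form on $V_\lambda$ normalized by $\langle \eta_\lambda, \eta_\lambda \rangle = 1$.

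For (i), the heart of the matter is to show that
\[
V^{\QQ \text{ low}}_\lambda \cap \L_0(\lambda) \cap \br{\L_0(\lambda)} \xrightarrow{\cong} \L_0(\lambda) / \u \L_0(\lambda)
\]
is an isomorphism. Injectivity holds formally for any balanced triple. For surjectivity, I would lift each $\bar{b}'_P \in \B'(\lambda)$ to an element $b'_P$ of the three-way intersection by induction on the depth of $P$ in the crystal graph. The base case $P = Z_\lambda$ uses $b'_{Z_\lambda} := \eta_\lambda$, which is bar-invariant, lies in $\Uq_\QQ \eta_\lambda = V^{\QQ \text{ low}}_\lambda$, and spans the highest weight space of $\L_0(\lambda)$. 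For the inductive step, when $\bar{b}'_P = \crystall{F_i}\, \bar{b}'_{P'}$ with $b'_{P'}$ already constructed, the candidate is the divided power $F_i^{(m)} b'_{P'}$ for the appropriate $m$; this lies in $V^{\QQ \text{ low}}_\lambda$ automatically, reduces mod $\u \L_0(\lambda)$ to $\bar{b}'_P$ plus lower-depth crystal basis elements, and is bar-invariant since $F_i^{(m)}$ and $b'_{P'}$ both are. Subtracting appropriate bar-invariant combinations of previously built $b'_Q$'s corrects the lower-order contributions, yielding the desired lift. Uniqueness follows from the standard fact that the only bar-invariant element of $\u \L_0(\lambda) \cap \br{\u \L_0(\lambda)}$ is $0$.

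I expect the main obstacle to be ensuring that the constructed lift $b'_P$ is \emph{path-independent} (a different chain of lowering operators from $Z_\lambda$ to $P$ must yield the same bar-invariant element) and that the Kashiwara operators $\crystall{E_i}, \crystall{F_i}$ descend through these lifts to the expected actions on $\L_0(\lambda)/\u\L_0(\lambda)$. Kashiwara's grand loop in \cite{Kas1} addresses both simultaneously: one runs interlocking inductions over all dominant weights $\lambda$ and over weight spaces within each $V_\lambda$ ordered by dominance, so that every step of the argument has access to the full compatibility proved in earlier steps. Once the isomorphism is in place, axioms (a)--(d) for a lower based module are straightforward: weight-homogeneity from the weight-space refinement of the construction, containment $B'(\lambda) \subseteq V^{\QQ \text{ low}}_\lambda$ by design, bar-invariance from the triple intersection, and the crystal basis condition by the very definition $B'(\lambda) = G'_\lambda(\B'(\lambda))$.

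For part (ii), I would take $B(\lambda)$ to be the $\field$-basis of $V^{\QQ \text{ up}}_\lambda$ dual to $B'(\lambda)$ under the contravariant form $\langle \cdot, \cdot \rangle$ satisfying $\langle xn, n'\rangle = \langle n, \varphi(x)n'\rangle$. Since the form is bar-equivariant and identifies $V^{\QQ \text{ up}}_\lambda$ and $\L_\infty(\lambda)$ with the duals of $V^{\QQ \text{ low}}_\lambda$ and $\L_0(\lambda)$ respectively (see \cite{Kas2}), the dual basis lies in $V^{\QQ \text{ up}}_\lambda \cap \br{\L_\infty(\lambda)} \cap \L_\infty(\lambda)$, and its image in $\L_\infty(\lambda)/\ui \L_\infty(\lambda)$ is precisely $\B(\lambda)$. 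This transports the balanced triple property from (i) to (ii), and the upper based module axioms for $(V_\lambda, B(\lambda))$ follow by dualizing axioms (a)--(d) for the lower basis term by term.
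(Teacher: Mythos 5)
The paper does not prove this theorem; it attributes it to Kashiwara~\cite{Kas1,Kas2} and merely notes afterwards that ``Kashiwara proves that the triples are balanced and the conclusions about based modules follow easily.'' Your proposal therefore attempts something the paper does not do: a sketch of Kashiwara's actual argument. The outline is broadly faithful to that argument---grand loop induction to establish the lower balanced triple, then passage to the upper basis via the contravariant form---so as a summary of the strategy it is reasonable.

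However, one claim is false, and the falsity matters because it obscures where the real difficulty lies. You write that ``uniqueness follows from the standard fact that the only bar-invariant element of $\u\L_0(\lambda) \cap \br{\u\L_0(\lambda)}$ is $0$.'' Consider $[2]^{-1} b'_P$ for any $b'_P \in B'(\lambda)$: it is bar-invariant (both factors are), lies in $\u\L_0(\lambda)$ since $[2]^{-1} \in \u\field_0$, and hence lies in $\br{\u\L_0(\lambda)}$ as well, yet it is nonzero. What actually forces uniqueness is the additional requirement of membership in the $\QQA$-form $V^{\QQ \text{ low}}_\lambda$, which rules out such denominators. But the injectivity of $V^{\QQ \text{ low}}_\lambda \cap \L_0(\lambda) \cap \br{\L_0(\lambda)} \to \L_0(\lambda)/\u\L_0(\lambda)$ is part of the balanced-triple assertion you are trying to prove; it cannot be invoked as an independent ``standard fact'' inside the induction. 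Managing this interdependence---existence, uniqueness, and compatibility with the Kashiwara operators have to be established together, weight space by weight space and across all $\lambda$ simultaneously---is precisely what makes Kashiwara's grand loop necessary and delicate, and it is the step your sketch elides.
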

Note that Kashiwara proves that the triples are balanced and the conclusions about based modules follow easily (see \cite[27.1.4]{LBook} or {\cite[Theorem 6.2.2]{HK}}).  We may now define integral forms  $V^{\mathbf{A} \text{ low}}_\lambda := \mathbf{A}B'(\lambda)$ and $V^{\mathbf{A} \text{ up}}_\lambda := \mathbf{A}B(\lambda)$ of $V_\lambda$.

We wish to make use of some of the facts established about lower based modules in \cite[Chapter 27]{LBook} and their corresponding statements for upper based modules. It is shown in \cite[Chapter 27]{LBook} that if $(N,B)$ is a lower based module, then so are $(N[\gd \lambda], B[\gd \lambda])$ and  $(N[\gd \lambda] /N[\gdneq \lambda], B[\gd \lambda] - B[\gdneq \lambda])$, where $B[\gd \lambda] = N[\gd \lambda] \cap B$, etc. Moreover, this last based module is isomorphic to a direct sum of copies of $V_\lambda$ with their lower global canonical bases.  The analogous statements for upper based modules are true with $\gd$ replaced by $\ld$ and are shown in \cite[\textsection 5.2]{Kas2}.

\subsection{Tensor products of based modules}
\label{ss Tensor products of based modules}
Let $(N,B), (N',B')$ be lower (resp.  upper) based modules.  There is a basis  $B \diamond B'$ (resp.  $B \heart B'$) which makes $N \tsr N'$ into a lower (resp.  upper) based module.  However, first, we need an involution on  $N \tsr N'$ that intertwines the $\br{\cdot}$-involution on $\Uq$.  This definition is not obvious and requires Lusztig's quasi-$R$-matrix, but adapted to our coproduct as in \cite{Brundan}: let $\Theta = (\varphi \tsr \varphi)(\tilde{\Theta}^{-1})$ where $\tilde{\Theta}$ is exactly Lusztig's quasi-$\R$-matrix from \cite[4.1.2]{LBook}. It is an element of a certain completion $(\Uq \tsr \Uq)^{\wedge}$ of the algebra $\Uq \tsr \Uq$.
Then the involution $\br{\cdot}:N \tsr N' \to N \tsr N'$ is defined by $\br{n \tsr n'} = \Theta(\br{n} \tsr \br{n'})$. (This involution is denoted $\Psi$ in \cite{LBook}.)
\begin{theorem}[Lusztig {\cite[Theorem 27.3.2]{LBook}}] \label{t tsr product lower canbas}
Maintain the notation above with $(N,B), (N', B')$ lower based modules and set $(N \tsr N')_{\ZZ[\u]} = \ZZ[\u] B \tsr B'$. For any $(b,b') \in B \times B'$, there is a unique element $b \diamond b' \in (N \tsr N')_{\ZZ[\u]}$ such that $\br{b\diamond b'} = b \diamond b'$ and $(b\diamond b')-b\tsr b' \in \u (N\tsr N')_{\ZZ[\u]}$.

Set $B \diamond B' = \{b\diamond b' : b \in B, b' \in B'\}$. Then the pair $(N \tsr N', B\diamond B')$ is a lower based module.
\end{theorem}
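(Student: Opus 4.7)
The plan is to follow Lusztig's strategy for based modules, adapting to our coproduct (which is why $\Theta$ is defined via $\tilde\Theta$ as in the excerpt). First, I would unpack what the bar-involution on $N \tsr N'$ does in the basis $B \tsr B' := \{b \tsr b' : b \in B, b' \in B'\}$. Writing $\tilde\Theta = \sum_{\nu \in \ZZ_{\geq 0}^{n-1}} \tilde\Theta_\nu$ with $\tilde\Theta_\nu \in U^-_{-\nu} \tsr U^+_\nu$ and $\tilde\Theta_0 = 1 \tsr 1$, and applying $(\varphi \tsr \varphi)$ to invert this expression, the resulting $\Theta$ again has leading term $1 \tsr 1$ and each higher component strictly raises the weight of the right-hand tensor factor (equivalently, strictly lowers the weight of the left factor). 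Since $N$ and $N'$ lie in $\Oint$ and hence are finite-dimensional, for any fixed pair $(b,b')$ only finitely many components $\Theta_\nu$ act nontrivially, so $\br{b \tsr b'} = \Theta(b \tsr b')$ is a finite $\mathbf{A}$-linear combination of elements of $B \tsr B'$, with coefficient $1$ on $b \tsr b'$ and all other nonzero contributions coming from pairs $(c,c')$ with $\text{wt}(c') > \text{wt}(b')$.

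Next I would invoke the standard bar-involution fixed-point lemma (as in \cite[\textsection 24.2]{LBook} or the original Kazhdan-Lusztig argument): given a free $\mathbf{A}$-module with basis $\{m_i\}_{i \in I}$ and a semilinear involution $\br{\cdot}$ such that $\br{m_i} - m_i$ lies in the $\mathbf{A}$-span of $\{m_j : j \prec i\}$ for a locally finite partial order $\prec$, there is a unique bar-invariant $C_i \in m_i + \sum_{j \prec i} \u \ZZ[\u] m_j$. Applying this to the order on $B \times B'$ coming from the weight of the second factor proves the existence and uniqueness of $b \diamond b'$, with $b \diamond b' - b \tsr b' \in \u (N \tsr N')_{\ZZ[\u]}$ and $\br{b \diamond b'} = b \diamond b'$.

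It remains to verify the four axioms (a)--(d) of a lower based module for $(N \tsr N', B \diamond B')$. Axiom (a) is immediate: the bar-involution preserves weight spaces, so by uniqueness $b \diamond b'$ lives in the weight space of $b \tsr b'$. Axiom (b) follows from the fact that $N_\mathbf{A} \tsr N'_\mathbf{A}$ is stable under $\Uq_\QQ$, which is a standard computation using the coproduct formulas \eqref{e U_q coproduct} together with the divided-power generators of $\Uq_\mathbf{A}$. Axiom (c) is built into the definition of the bar-involution on $N \tsr N'$: the quasi-$R$-matrix is chosen precisely so that $\br{\cdot}$ intertwines the coproduct $\Delta$ and its bar $\br\Delta$. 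The main obstacle is axiom (d): one must show $(\L_0(N) \tsr_{\field_0} \L_0(N'), \B \diamond \B')$ is a lower crystal basis at $\u = 0$, where $\B \diamond \B'$ denotes the image of $B \diamond B'$ in the reduction mod $\u$. Since $b \diamond b' \equiv b \tsr b' \pmod{\u(N \tsr N')_{\ZZ[\u]}}$, the image of $B \diamond B'$ modulo $\u$ coincides with the image of $B \tsr B'$, so the crystal-basis property reduces to Kashiwara's tensor product theorem for lower crystal bases \cite[Theorem 4.1]{Kas2}, which asserts that the tensor product of two lower crystal bases is a lower crystal basis with the Kashiwara operators acting by the tensor product rule. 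This last step is the deep ingredient and is where the bulk of the work resides.
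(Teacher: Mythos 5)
The paper does not actually prove this theorem; it attributes it to Lusztig \cite[Theorem~27.3.2]{LBook} and moves on, so there is no ``paper's own proof'' to compare against. Your proposal is a faithful reconstruction of Lusztig's argument adapted to the paper's coproduct convention: the filtered structure of $\Theta$ gives a triangular bar-involution, the Kazhdan--Lusztig-type fixed-point lemma gives existence and uniqueness of $b\diamond b'$, axioms (a)--(c) follow formally, and axiom (d) reduces (via $b\diamond b' \equiv b\tsr b'$ mod $\u$) to Kashiwara's tensor product theorem for crystal bases. That is exactly the right plan, and it is what the cited proof does.

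One genuine, if small, error: you state that the nontrivial components of $\Theta = (\varphi\tsr\varphi)(\tilde\Theta^{-1})$ strictly raise the weight of the \emph{right}-hand tensor factor. This has the direction backwards. Since $\tilde\Theta_\nu \in U^-_{-\nu}\tsr U^+_\nu$ and the antiautomorphism $\varphi$ sends $U^-_{-\nu}$ to $U^+_\nu$ (and $U^+_\nu$ to $U^-_{-\nu}$), one has $\Theta_\nu \in U^+_\nu \tsr U^-_{-\nu}$, so $\Theta_\nu(b\tsr b')$ lives in weight $(\mathrm{wt}(b)+\nu,\ \mathrm{wt}(b')-\nu)$. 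Thus $\br{b\tsr b'}-b\tsr b'$ is supported on $c\tsr c'$ with $\mathrm{wt}(c') < \mathrm{wt}(b')$, not $>$ as you write. This forces a corresponding flip in the partial order you feed into the fixed-point lemma, but the triangularity and finiteness (from $N,N'\in\Oint$) persist, so the structure of the argument is unaffected. With that correction the proposal is sound.
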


There is a similar theorem for upper based modules, as the proof of Theorem \ref{t tsr product lower canbas} adapts easily.  This is discussed in \cite{FK} in the $n=2$ case, and we use the notation $\heart$ for this product as is done there.
\begin{theorem} \label{t tsr product upper canbas}
Maintain the notation above with $(N,B), (N', B')$ upper based modules and set $(N \tsr N')_{\ZZ[\ui]} = \ZZ[\ui] B \tsr B'$. For any $(b,b') \in B \times B'$, there is a unique element $b \heart b' \in (N \tsr N')_{\ZZ[\ui]}$ such that $\br{b\heart b'} = b \heart b'$ and $(b\heart b')-b\tsr b' \in \ui (N\tsr N')_{\ZZ[\ui]}$.

Set $B \heart B' = \{b\heart b' : b \in B, b' \in B'\}$. Then the pair $(N \tsr N', B\heart B')$ is an upper based module.
\end{theorem}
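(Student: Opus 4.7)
The plan is to adapt the proof of Theorem \ref{t tsr product lower canbas} to the upper setting, systematically replacing $\u$ by $\ui$, $\field_0$ by $\field_\infty$, and lower crystal notions by upper crystal notions. The bar involution on $N \tsr N'$ is already defined via $\br{n \tsr n'} = \Theta(\br{n} \tsr \br{n'})$, and since $\br{b} = b$ for $b \in B$ and $\br{b'} = b'$ for $b' \in B'$ (condition (c) of an upper based module), we have $\br{b \tsr b'} = \Theta(b \tsr b')$. The entire argument hinges on the following $\ui$-smallness property of $\Theta$ on the lattice $(N \tsr N')_{\ZZ[\ui]} = \ZZ[\ui](B \tsr B')$: $\Theta(b \tsr b') - b \tsr b' \in \ui(N \tsr N')_{\ZZ[\ui]}$ for all $b \in B, b' \in B'$.

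To establish existence and uniqueness of $b \heart b'$, I would first verify the $\ui$-smallness of $\Theta$. Decomposing $\Theta = \sum_\nu \Theta_\nu$ via the weight grading inherited from Lusztig's $\tilde\Theta = \sum_\nu \tilde\Theta_\nu$ through $\Theta = (\varphi \tsr \varphi)(\tilde\Theta^{-1})$, each $\Theta_\nu$ with $\nu \neq 0$ is, up to a scalar lying in $\u\ZZ[\u]$, a sum of products of divided powers of $F_i$'s tensored against divided powers of $E_i$'s. Since the divided powers preserve $\mathbf{A}$-forms and genuinely shift the weight, their action on $B \tsr B' \subset \L_\infty(N) \tsr \L_\infty(N')$ carries the extra factor of $\ui$ needed; this is the upper analog of the smallness statement implicit in \cite[Chapter 4]{LBook}, adapted to our coproduct exactly as in \cite{Brundan}. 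Granting this, a standard triangularity argument using $\br{\cdot}^2 = \idelm$ together with a partial order on $B \times B'$ compatible with the weight filtration produces a unique $b \heart b'$ with the stated properties.

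Next I would verify the four upper-based-module axioms for $(N \tsr N', B \heart B')$. Weight-space compatibility (a) is immediate because $b \heart b'$ is a sum of weight vectors of a single fixed weight. Stability of the $\QQA$-form under $\Uq_\QQ$ (b) reduces, via the unitriangular change of basis between $B \tsr B'$ and $B \heart B'$, to $\Uq_\QQ$-stability of $\QQ \tsr_\ZZ \mathbf{A}(B \tsr B')$, which in turn follows from the coproduct formula \eqref{e U_q coproduct} together with the stability assumed for $(N,B)$ and $(N',B')$. The intertwining property (c) is built into the construction of $b \heart b'$. For (d), the reduction of $b \heart b'$ modulo $\ui(\L_\infty(N) \tsr \L_\infty(N'))$ equals $b \tsr b' \mod \ui$, so the image of $B \heart B'$ coincides with $\B \tsr \B'$, which is an upper crystal basis of $N \tsr N'$ by Kashiwara's tensor product rule for upper crystal bases at $\u = \infty$.

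The main obstacle is the verification of the $\ui$-smallness of $\Theta - 1$. Because $\Theta$ is built from $\tilde\Theta^{-1}$ via the antiautomorphism $\varphi \tsr \varphi$, the natural filtrations are reversed relative to Lusztig's original setup, and some careful bookkeeping of weights and powers of $\u$ is required to confirm that the $\u$-factor appearing in the action of $\tilde\Theta_\nu$ on a lower crystal lattice becomes, after inversion, application of $\varphi \tsr \varphi$, and passage from the lower to the upper crystal lattice, a factor of $\ui$. Once this is settled, the remainder of the argument is parallel to Lusztig's proof of Theorem \ref{t tsr product lower canbas} and presents no further difficulty.
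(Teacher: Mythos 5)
Your overall plan — use the same bar involution $\br{n \tsr n'} = \Theta(\br{n} \tsr \br{n'})$, transport Lusztig's argument, and check the four axioms for upper based modules — is exactly the route the paper has in mind when it says ``the proof of Theorem \ref{t tsr product lower canbas} adapts easily.'' However, there is a substantive error in what you identify as the engine of the construction.

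You state that ``the entire argument hinges on'' the $\ui$-smallness
$\Theta(b \tsr b') - b \tsr b' \in \ui(N\tsr N')_{\ZZ[\ui]}$. That claim is false, and the corresponding statement for the lower case (with $\u$ in place of $\ui$) is also false, so this is not a sign-bookkeeping issue. To see it, take $N = N' = V$ the natural representation of $U_q(\gl_2)$ with $B = B' = \{v_1, v_2\}$. Since both $v_1$ and $v_2$ are bar-fixed, one has $\br{\bv_{21}} = \Theta(v_2\tsr v_1)$. Comparing with $\cp_{21} = \bv_{21} + \u\,\bv_{12}$ being bar-invariant (the $r=2$ instance of the pattern visible in Figure \ref{f c_111 example lower}) forces
\[
\Theta(v_2\tsr v_1) - v_2\tsr v_1 = (\u - \ui)\,v_1\tsr v_2,
\]
and $\u - \ui$ lies in neither $\u\ZZ[\u]$ nor $\ui\ZZ[\ui]$. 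So $\Theta - 1$ is not small on either lattice. What is true, and what Lusztig's proof actually uses, is a \emph{unitriangularity} statement: $\Theta(b\tsr b') - b\tsr b'$ is an $\mathbf{A}$-linear combination of elements $b_1 \tsr b_1'$ that are strictly smaller than $b\tsr b'$ in a suitable partial order (e.g.\ the order of \cite[24.2]{LBook}, with $b_1 > b$ in one factor and $b_1' < b'$ in the other). Given unitriangularity with $\mathbf{A}$-coefficients, the standard Kazhdan--Lusztig-type lemma then \emph{produces} a unique bar-fixed $b\heart b'$ congruent to $b\tsr b'$ modulo $\ui(N\tsr N')_{\ZZ[\ui]}$; the $\ui$-smallness is an output of the Gram--Schmidt-type recursion, not an input. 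You have conflated $\Theta(b\tsr b')$ with $b\heart b'$ itself.

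The rest of your outline (verifying (a)--(d) once $b\heart b'$ is constructed, and in particular observing that the image of $B\heart B'$ in $\L_\infty/\ui\L_\infty$ is the tensor product crystal $\B \times \B'$) is fine. So the fix is local: replace your smallness claim by the upper analogue of Lusztig's triangularity lemma for $\Theta = (\varphi\tsr\varphi)(\tilde\Theta^{-1})$, with $\mathbf{A}$-coefficients and the appropriate partial order, and then invoke the standard uniqueness argument. As a minor point, your description of the weight components has the factors reversed: since $\tilde\Theta_\nu \in U^-_{-\nu}\tsr U^+_\nu$ and $\varphi$ swaps $E_i \leftrightarrow F_i$, the nonzero components of $\Theta$ live in $U^+ \tsr U^-$ (divided powers of $E_i$'s tensor divided powers of $F_i$'s), not the other way around.
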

Moreover, the products $\diamond$ and $\heart$ are associative (\cite[27.3.6]{LBook}).

\section{Quantum Schur-Weyl duality and canonical bases}
\label{s Quantum Schur-Weyl duality and canonical bases}
Write $V$ for the natural representation  $V_{\epsilon_1}$ of $\Uq$.
The action of  $\Uq$ on the weight basis $v_1,\ldots,v_n$ of  $V$ is given by $q^{\dual{\epsilon}_{i}} v_j = \u^{\delta_{ij}} v_j$, $F_i v_i = v_{i+1}$, $F_i v_j = 0$ for $i \neq j$, and $E_i v_{i+1} = v_i$, $E_i v_j = 0$ for $j \neq i+1$.

We recall the commuting actions of $\Uq$ and $\H_r$ on $\bT := V^{\tsr r}$ as described in \cite{Jimbo,GL, Ram,FKK, Brundan} and give several characterizations of the lower and upper canonical basis of $\bT$; we closely follow \cite{Brundan} and are consistent with its conventions.

\subsection{Commuting actions on $\bT = V^{\tsr r}$}
The action of $\Uq$ on $\bT$ is determined by the coproduct $\Delta$  \eqref{e U_q coproduct}.
The commuting action of $\H_r$ on $\bT$ comes from a $\Uq$-isomorphism  $\R_{V,V} : V \tsr V \to V \tsr V$ determined by the universal $R$-matrix; this isomorphism can also be defined using the quasi-$\R$-matrix \cite[32.1.5]{LBook} (see also \cite[\textsection 3]{Brundan}).  The  $\H_r$ action is given explicitly on generators as follows: for a word $\mathbf{k} = k_1\dots k_r \in [n]^r$, let $\bv_{\mathbf{k}} = v_{k_1} \tsr v_{k_2} \tsr \dots \tsr v_{k_r}$ be the corresponding tensor monomial. Recall from  \textsection\ref{ss type A combinatorics preliminaries} the right action of  $\S_r$ on words of length  $r$. Then
\be
\label{e T inverse_i act on V}
\bv_\mathbf{k} T_i^{-1} =
\begin{cases}
\bv_{\mathbf{k} \, s_i} & \text{ if } k_i < k_{i+1}, \\
\ui \bv_\mathbf{k} & \text{ if } k_i = k_{i + 1}, \\
(\ui - \u) \bv_\mathbf{k} + \bv_{\mathbf{k} \, s_i} & \text{ if } k_i > k_{i+1}.
\end{cases}
\ee

\begin{remark}
This convention for the action of $\H_r$ on $\bT$ is consistent with that in \cite{Brundan, Ram, GCT4, BMSGCT4} and \cite[Proposition 2.1$'$]{FKK}, but not with that in \cite{GL} and \cite[Proposition 2.1]{FKK}. Note that  $\bv_\mathbf{k}, T_i^{-1}$ are denoted $M_\alpha, H_i$ respectively in \cite{Brundan}.
\end{remark}

We can now state the beautiful quantum version of Schur-Weyl duality, originally due to Jimbo \cite{Jimbo}.
\begin{theorem}\label{c Schur-Weyl duality basic}
As a $(\Uq, \field\H_r)$-bimodule, $\bT$ decomposes into irreducibles as
\[
\bT \cong \bigoplus_{\lambda \vdash_n r}  V_\lambda \tsr M_\lambda.
\]
\end{theorem}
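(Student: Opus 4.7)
The strategy is to exploit that the $\H_r$-action on $\bT$ arises from $\Uq$-module intertwiners, combine this with the semisimplicity of $\Oint$, and then identify the multiplicity spaces by a double-centralizer dimension argument. Let $\rho : \field\H_r \to \End_\field(\bT)$ denote the map defining the $\H_r$-action.

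First I would observe that the $\H_r$- and $\Uq$-actions commute automatically: the generator $T_i^{-1}$ acts via $\R_{V,V}$ applied at tensor positions $i,i+1$, and $\R_{V,V} : V \tsr V \to V \tsr V$ is a $\Uq$-module homomorphism by construction, so the image of $\rho$ lies in $\End_{\Uq}(\bT)$. Next, since $\bT$ is finite-dimensional with weights in $\ZZ_{\geq 0}^n$, it belongs to $\Oint$, and the semisimplicity of $\Oint$ yields a $\Uq$-decomposition
\[
\bT \;\cong\; \bigoplus_{\lambda} V_\lambda \tsr H_\lambda, \qquad H_\lambda := \hom_{\Uq}(V_\lambda, \bT),
\]
where each $H_\lambda$ inherits an $\field\H_r$-module structure from the commuting actions. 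A weight-support argument forces $H_\lambda = 0$ unless $\lambda$ has at most $n$ parts, i.e.\ unless $\lambda \vdash_n r$; moreover $H_\lambda$ is canonically identified, via $\varphi \mapsto \varphi(\eta_\lambda)$, with the highest-weight-$\lambda$ subspace $\bT^\lambda \cap \bigcap_i \ker E_i$ of $\bT$.

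It then remains to identify $H_\lambda$ with $M_\lambda$ as $\field\H_r$-modules for each $\lambda \vdash_n r$. The plan is a double-centralizer argument: show that $\rho$ surjects onto $\End_{\Uq}(\bT)$, so that each $H_\lambda$ is an irreducible $\field\H_r$-module. Combined with the identity $n^r = \sum_{\lambda \vdash_n r} (\dim V_\lambda)(\dim M_\lambda)$, a well-known consequence of RSK applied to words in $[n]^r$, this forces $\dim H_\lambda = \dim M_\lambda$, and hence $H_\lambda \cong M_\lambda$ once one verifies that distinct $\lambda$ produce non-isomorphic $\field\H_r$-modules (e.g.\ by matching central characters or by noting the $H_\lambda$ have different dimensions when they are nonzero, combined with knowledge of the simple $\field\H_r$-modules).

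The main obstacle is the surjectivity of $\rho$: the algebra $\field\H_r$ is defined abstractly, while $\End_{\Uq}(\bT)$ is a priori larger. The cleanest route I would pursue is by specialization at $\u = 1$: work over the $\mathbf{A}$-lattice $\mathbf{A}\{\bv_\mathbf{k} : \mathbf{k} \in [n]^r\}$ inside $\bT$ together with a suitable integral form of $\Uq$, reduce modulo $(\u - 1)$ to recover the classical Schur--Weyl duality for $(U(\gl_n), \QQ \S_r)$ acting on $V_\QQ^{\tsr r}$, and then use upper-semicontinuity of dimensions of $\Uq$-invariants in this flat family together with the dimension count above to transfer the surjectivity of the centralizer map from $\u = 1$ to generic $\u$. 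Once $\rho$ is surjective, the bimodule decomposition asserted in the theorem follows from the double-centralizer principle applied to $(\Uq, \field\H_r)$ on the semisimple bimodule $\bT$.
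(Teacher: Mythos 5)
The paper itself does not prove this statement; it cites Jimbo's original paper and moves on. So I am assessing your proof on its own merits. Your strategy (double centralizer plus specialization at $\u=1$) is the standard one, and most of it holds up: the commuting of the actions, the decomposition $\bT \cong \bigoplus_\lambda V_\lambda \tsr H_\lambda$ with $H_\lambda = \hom_{\Uq}(V_\lambda,\bT)$, and the semicontinuity argument for surjectivity of $\rho$ (rank of $\rho$ can only drop at $\u=1$, while $\dim\End_{\Uq}(\bT)$ can only jump up at $\u=1$, and the two bounds pinch together by classical Schur--Weyl) are all sound, modulo some care with integral forms that you rightly flag.

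The genuine gap is in the final identification $H_\lambda \cong M_\lambda$. Two problems. First, your claim that irreducibility of $H_\lambda$ together with the single RSK identity $n^r = \sum_{\lambda\vdash_n r}(\dim V_\lambda)(\dim M_\lambda)$ ``forces'' $\dim H_\lambda = \dim M_\lambda$ does not follow as stated: a single linear relation among the unknowns $\dim H_\lambda$ is not enough. (One \emph{can} recover the dimensions by comparing $\dim\bT^\zeta = \sum_\lambda K_{\lambda\zeta}\dim H_\lambda$ for \emph{all} weights $\zeta$ with the same identity for $\dim M_\lambda$ and inverting the unitriangular Kostka matrix; but that is a different argument than the one you give.) Second, and more seriously, even knowing $\dim H_\lambda = \dim M_\lambda$ and that $H_\lambda$ is an irreducible $\field\H_r$-module does not pin down its isomorphism class: for a non-self-conjugate $\lambda$, the modules $M_\lambda$ and $M_{\lambda'}$ have equal dimension but are non-isomorphic, so ``noting the $H_\lambda$ have different dimensions'' cannot work. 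The suggestion ``matching central characters'' points in a viable direction but is not developed and would require, e.g., computing Jucys--Murphy eigenvalues on a highest-weight vector. The cleanest fix, which is also consistent with the rest of the paper, is to identify the $\field\H_r$-module structure on a concrete weight space: Proposition \ref{p weight space equals induced} identifies $\bT^\zeta$ with the permutation module $\epsilon_+\tsr_{\H_{J_\zeta}}\H_r$, and comparing the decompositions $\bT^\zeta \cong \bigoplus_\mu K_{\mu\zeta} H_\mu$ and $\epsilon_+\tsr_{\H_{J_\zeta}}\H_r \cong \bigoplus_\mu K_{\mu\zeta} M_\mu$ by downward induction in dominance order (using that $K_{\lambda\lambda}=1$ and $K_{\mu\lambda}=0$ unless $\mu\gd\lambda$) forces $H_\lambda\cong M_\lambda$. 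Alternatively one can carry the $\mathbf{A}$-lattice $H_\lambda^{\mathbf{A}}$ through the flat family and identify its $\u=1$ fibre with the classical Specht module, but that requires more bookkeeping than you supply.
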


As an $\H_r$-module, $\bT$ decomposes into a direct sum of weight spaces: $\bT \cong \bigoplus_{\zeta \in X} \bT^\zeta$.  The weight space $\bT^\zeta$ is the $\field$-vector space spanned by $\bv_{\mathbf{k}}$ such that $\mathbf{k}$ has content $\zeta$.
Let $\epsilon_+ := M_{(r)}^\mathbf{A}$ be the trivial $\H_r$-module, i.e. the one-dimensional module identified with the map $\H_r \to \mathbf{A}$, $T_i \mapsto \u$. It is not difficult to prove using \eqref{e T inverse_i act on V} (see \cite[\textsection 4]{Brundan})

\begin{proposition}
\label{p weight space equals induced}
The map $\bT_\mathbf{A}^\zeta \to \epsilon_+ \tsr_{\H_{J_\zeta}} \H_r$ given by $\bv_\mathbf{k} \mapsto \epsilon_+ \tsr_{\H_{J_\zeta}} \br{T}_{d(\mathbf{k})}$ is an isomorphism of right $\H_r$-modules.
\end{proposition}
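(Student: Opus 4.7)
The plan is to verify that the proposed map is both an $\mathbf{A}$-module isomorphism and right $\H_r$-equivariant.

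For the first claim, I would use the standard bijection $\mathbf{k} \mapsto d(\mathbf{k})$ between words of content $\zeta$ and $\leftexp{J_\zeta}{\S_r}$: the relation $\sort(\mathbf{k}) w = \mathbf{k}$ determines $w$ uniquely in $\leftexp{J_\zeta}{\S_r}$, and conversely. The source $\bT_\mathbf{A}^\zeta$ is free on $\{\bv_\mathbf{k}\}$. For the target, the parabolic coset decomposition $\S_r = \bigsqcup_{x \in \leftexp{J_\zeta}{\S_r}} (\S_r)_{J_\zeta} \cdot x$ (with $y \cdot x$ a reduced factorization for $y \in (\S_r)_{J_\zeta}$) implies that $\{\br{T}_x : x \in \leftexp{J_\zeta}{\S_r}\}$ is a basis of $\H_r$ as a free left $\H_{J_\zeta}$-module, so $\{\epsilon_+ \tsr_{\H_{J_\zeta}} \br{T}_x\}$ is an $\mathbf{A}$-basis of $\epsilon_+ \tsr_{\H_{J_\zeta}} \H_r$. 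The proposed map is then a bijection of $\mathbf{A}$-bases.

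Right $\H_r$-equivariance reduces to checking compatibility with each generator $T_i^{-1} = \br{T}_{s_i}$, matching the three cases of \eqref{e T inverse_i act on V} against the expansion of $\br{T}_{d(\mathbf{k})} \br{T}_{s_i}$ inside the induced module. The structural fact I would use is that $d(\mathbf{k})$ sends position $j$ of $\mathbf{k}$ into the block $B_{k_j}$ of $\sort(\mathbf{k})$, and the minimality $d(\mathbf{k}) \in \leftexp{J_\zeta}{\S_r}$ forces $d(\mathbf{k})$ to be order-preserving on the positions of each fixed letter of $\mathbf{k}$. If $k_i < k_{i+1}$, then $d(\mathbf{k}) \cdot s_i$ is reduced and equals $d(\mathbf{k} s_i) \in \leftexp{J_\zeta}{\S_r}$, so $\br{T}_{d(\mathbf{k})}\br{T}_{s_i} = \br{T}_{d(\mathbf{k} s_i)}$. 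If $k_i > k_{i+1}$, then $s_i$ is a right descent of $d(\mathbf{k})$ and $d(\mathbf{k} s_i) = d(\mathbf{k}) s_i$ is shorter by one; the quadratic relation $\br{T}_{s_i}^2 = (u^{-1} - u)\br{T}_{s_i} + 1$ yields $\br{T}_{d(\mathbf{k})}\br{T}_{s_i} = (u^{-1} - u)\br{T}_{d(\mathbf{k})} + \br{T}_{d(\mathbf{k} s_i)}$. If $k_i = k_{i+1}$, then positions $i$ and $i+1$ are consecutive positions of $\mathbf{k}$ sharing a letter, and the order-preserving property forces $d(\mathbf{k})(i)$ and $d(\mathbf{k})(i+1)$ to be consecutive integers $j, j+1$ in the corresponding block, so $s_j \in J_\zeta$; this gives the identity $d(\mathbf{k}) \cdot s_i = s_j \cdot d(\mathbf{k})$ with both factorizations reduced, hence $\br{T}_{d(\mathbf{k})}\br{T}_{s_i} = \br{T}_{s_j}\br{T}_{d(\mathbf{k})}$, and pushing $\br{T}_{s_j}$ across $\tsr_{\H_{J_\zeta}}$ produces the scalar $u^{-1}$ since $\br{T}_{s_j}$ acts on $\epsilon_+$ as $u^{-1}$. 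In each case the result matches the corresponding clause of \eqref{e T inverse_i act on V}.

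The main obstacle is the case $k_i = k_{i+1}$: extracting the identity $d(\mathbf{k}) s_i = s_j d(\mathbf{k})$ from the definition of $d(\mathbf{k})$ as a minimal right coset representative is the one combinatorial step that is not automatic; the other two cases are immediate consequences of the generic behavior of reduced products and descents in $\S_r$. Once the order-preservation on letter classes is established, the whole verification is straightforward Hecke-algebra bookkeeping.
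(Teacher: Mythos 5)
Your proof is correct and is exactly the direct, case-by-case verification that the paper has in mind when it says the claim ``is not difficult to prove using \eqref{e T inverse_i act on V} (see \cite[\textsection 4]{Brundan})''; the paper itself omits the details and defers to Brundan. Your identification of the one nontrivial combinatorial input — the order-preservation of $d(\mathbf{k})$ on each letter class, which forces $d(\mathbf{k})(i), d(\mathbf{k})(i+1)$ to be consecutive when $k_i = k_{i+1}$ and yields $d(\mathbf{k})s_i = s_j d(\mathbf{k})$ with $s_j \in J_\zeta$ — is accurate, and the Hecke bookkeeping in all three cases matches \eqref{e T inverse_i act on V}.
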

Here $\bT_\mathbf{A}$ is the integral form of $\bT$, defined below.
\subsection{Lower canonical basis of $\bT$}
We now apply the general theory of \textsection\ref{s Crystal bases of the quantized enveloping algebra} to construct global crystal bases of $\bT$.  Recall from  \textsection\ref{ss Tensor products of based modules} that there is a $\br{\cdot}$-involution on  $\bT$ defined using the quasi-$\R$-matrix.
The $\br{\cdot}$-involution on $\H_r$ intertwines that of $\bT$, i.e.
\be \label{e br involution on H intertwines}
\br{v h} = \br{v} \ \br{h}, \text{ for any } v \in \bT, \ h \in \H_r.
\ee
This follows easily from the identity $\tilde{\Theta}^{-1} = \br{\tilde{\Theta}}$ from \cite{LBook}; see \cite{Brundan}.

Let $V_\mathbf{A} = \mathbf{A}\{v_i: i \in [n]\}$, which is the same as the integral forms $V^{\mathbf{A} \text{ low}}_{\epsilon_1} = V^{\mathbf{A} \text{ up}}_{\epsilon_1}$ from  \textsection\ref{ss global canonical bases}.
By Theorem \ref{t tsr product lower canbas} and associativity of the $\diamond$ product, $(\bT, B')$ is a lower based module with balanced triple $(\QQ \tsr_\ZZ \bT_\mathbf{A}, \L_0, \br{\L_0})$, where
\be
\begin{array}{ccl}
\L_0 &:=& \L_0(\epsilon_1) \tsr_{\field_0} \dots \tsr_{\field_0} \L_0(\epsilon_1), \\
\B' &:=& \B'(\epsilon_1) \times \dots \times \B'(\epsilon_1) \subseteq \L_0 / \u\L_0, \\
\bT_{\ZZ[\u]} &:=& \ZZ[\u] \{\bv_\mathbf{k} : \mathbf{k} \in [n]^r\}, \\
\bT_\mathbf{A} &:=& V_\mathbf{A} \tsr_\mathbf{A} \dots \tsr_\mathbf{A} V_\mathbf{A} = \mathbf{A} \tsr_\ZZ \bT_{\ZZ[\u]}, \\
B' &:=& B'(\epsilon_1) \diamond \dots \diamond B'(\epsilon_1).
\end{array}
\ee
We call $B'$ the \emph{lower canonical basis} of $\bT$ and, for each  $\mathbf{k} \in [n]^r$, we write $\cp_\mathbf{k}$ for the element $v_{k_1} \diamond \dots \diamond v_{k_r} \in B'$ and $b'_\mathbf{k} \in \B'$ for its image in $\L_0 / \u \L_0$.
Figure \ref{f c_111 example lower} gives the lower canonical basis in terms of the monomial basis for $r = 3, n =2$.

\begin{figure}[H]
\begin{tikzpicture}[xscale = 5,yscale = 3]
\tikzstyle{vertex}=[inner sep=0pt, outer sep=3pt, fill = white]
\tikzstyle{edge} = [draw, thick, ->,black]
\tikzstyle{LabelStyleH} = [text=black, anchor=south]
\tikzstyle{LabelStyleV} = [text=black, anchor=east]
    \node[vertex] (111) at (2.8,0) {$\cp_{111} = \bv_{111} $};

    \node[vertex] (112) at (2,-1) {$\cp_{112} = \bv_{112}$};
    \node[vertex] (121) at (2, 0) {$\cp_{121} = \bv_{121} + \u \bv_{112} $};
    \node[vertex] (211) at (2, 1) {$\cp_{211}= \atop \bv_{211} + \u \bv_{121} + \u^2 \bv_{112}$};

    \node[vertex] (122) at (1, -1) {$\cp_{122}= \bv_{122} $};
    \node[vertex] (212) at (1 , 0) {$\cp_{212}  = \bv_{212} + \u \bv_{122} $};
    \node[vertex] (221) at (1, 1) {$\cp_{221}= \atop \bv_{221} + \u \bv_{212} + \u^2 \bv_{122}$};

    \node[vertex] (222) at (0, 0) {$\cp_{222}= \bv_{222} $};

\draw[edge] (111) to node[LabelStyleH]{$ $} (211);
\draw[edge] (211) to node[LabelStyleH]{$[2]$} (221);
\draw[edge] (221) to node[LabelStyleH]{$[3]$} (222);
\draw[edge] (121) to node[LabelStyleH]{$ $} (122);
\draw[edge] (121) to node[LabelStyleH]{$ $} (221);
\draw[edge] (112) to node[LabelStyleH]{$ $} (212);
\draw[edge] (212) to node[LabelStyleH]{$[2]$} (222);
\draw[edge] (122) to node[LabelStyleH]{$ $} (222);

\foreach \y in {-.2}{
  \node[vertex] (t111) at (2.8,0+1.45*\y) {\small $\left(\tableau{ 1 & 1 & 1}, \tableau{1 & 2 & 3}\right)$};

    \node[vertex] (t112) at (2,-1 +1.65*\y) {\small $\left(\tableau{ 1 & 1 \cr 2}, \tableau{1 & 3 \cr 2}\right)$};
    \node[vertex] (t121) at (2, 0+1.65*\y) {\small $\left(\tableau{ 1 & 1 \cr 2}, \tableau{1 & 2 \cr 3}\right)$};
    \node[vertex] (t211) at (2, 1+1.45*\y) {\small $\left(\tableau{ 1 & 1 & 2}, \tableau{1 & 2 & 3}\right)$};

    \node[vertex] (t122) at (1, -1 + 1.65*\y) {\small $\left(\tableau{ 1 & 2\cr 2}, \tableau{1 & 2 \cr 3}\right)$};
    \node[vertex] (t212) at (1 , 0 +1.65*\y) {\small $\left(\tableau{ 1 & 2\cr 2}, \tableau{1 & 3\cr 2}\right)$};
    \node[vertex] (t221) at (1, 1+1.45*\y) {\small $\left(\tableau{ 1 & 2 & 2}, \tableau{1 & 2 & 3}\right)$};

    \node[vertex] (t222) at (0, 0+1.45*\y) {\small $\left(\tableau{2 & 2 & 2}, \tableau{1 & 2 & 3}\right)$};
}
\end{tikzpicture}
\caption{An illustration of Corollary \ref{c Schur-Weyl duality lower} for  $r=3, n=2$.  The pairs of tableaux are of the form $(P(\mathbf{k}^\dagger), Q(\mathbf{k}^\dagger))$.  The arrows and their coefficients give the action of $F_1$ on the lower canonical basis.}
\label{f c_111 example lower}
\end{figure}
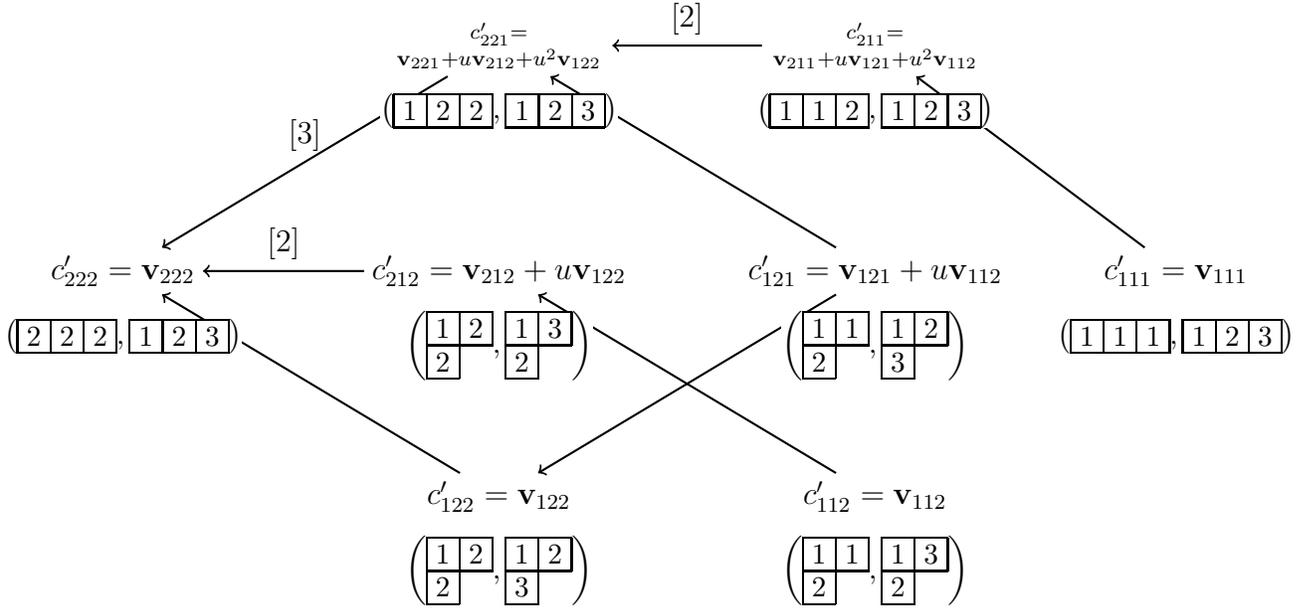

\begin{figure}[H]
\begin{tikzpicture}[xscale = 5,yscale = 3]
\tikzstyle{vertex}=[inner sep=0pt, outer sep=3pt, fill = white]
\tikzstyle{edge} = [draw, thick, ->,black]
\tikzstyle{LabelStyleH} = [text=black, anchor=south]
\tikzstyle{LabelStyleV} = [text=black, anchor=east]

    \node[vertex] (111) at (3,0) {$c_{111} = \bv_{111} $};

    \node[vertex] (112) at (2,-1) {$c_{112} = \bv_{112}$};
    \node[vertex] (121) at (2, 0) {$c_{121} = \bv_{121} - \ui \bv_{112} $};
    \node[vertex] (211) at (2, 1) {$c_{211}= \bv_{211}- \ui \bv_{121}$};

    \node[vertex] (122) at (1, -1) {$c_{122}= \bv_{122} $};
    \node[vertex] (212) at (1 , 0) {$c_{212}  = \bv_{212}- \ui \bv_{122} $};
    \node[vertex] (221) at (1, 1) {$c_{221}= \bv_{221}- \ui \bv_{212}$};

    \node[vertex] (222) at (0, 0) {$c_{222}= \bv_{222} $};

\draw[edge] (111) to node[LabelStyleH]{$[3]$} (112);
\draw[edge] (111) to node[LabelStyleH]{$[2]$} (121);
\draw[edge] (111) to node[LabelStyleH]{$ $} (211);
\draw[edge] (112) to node[LabelStyleH]{$[2]$} (122);
\draw[edge] (112) to node[LabelStyleH]{$ $} (212);
\draw[edge] (121) to node[LabelStyleH]{$ $} (221);
\draw[edge] (211) to node[LabelStyleH]{$ $} (212);
\draw[edge] (122) to node[LabelStyleH]{$ $} (222);

\foreach \y in {-.2}{
  \node[vertex] (t111) at (3,0+\y) {\small $\left(\tableau{ 1 & 1 & 1}, \tableau{1 & 2 & 3}\right)$};

    \node[vertex] (t112) at (2,-1+\y) {\small $\left(\tableau{ 1 & 1 & 2}, \tableau{1 & 2 & 3}\right)$};
    \node[vertex] (t121) at (2, 0+1.65*\y) {\small $\left(\tableau{ 1 & 1 \cr 2}, \tableau{1 & 2 \cr 3}\right)$};
    \node[vertex] (t211) at (2, 1+1.65*\y) {\small $\left(\tableau{ 1 & 1 \cr 2}, \tableau{1 & 3\cr 2}\right)$};

    \node[vertex] (t122) at (1, -1+\y) {\small $\left(\tableau{ 1 & 2 & 2}, \tableau{1 & 2 & 3}\right)$};
    \node[vertex] (t212) at (1 , 0 +1.65*\y) {\small $\left(\tableau{ 1 & 2\cr 2}, \tableau{1 & 3\cr 2}\right)$};
    \node[vertex] (t221) at (1, 1 +1.65*\y) {\small $\left(\tableau{ 1 & 2\cr 2}, \tableau{1 & 2 \cr 3}\right)$};

    \node[vertex] (t222) at (0, 0+\y) {\small $\left(\tableau{2 & 2 & 2}, \tableau{1 & 2 & 3}\right)$};
}
\end{tikzpicture}
\caption{An illustration of Corollary \ref{c Schur-Weyl duality upper} for  $r=3, n=2$.  The pairs of tableaux are of the form $(P(\mathbf{k}), Q(\mathbf{k}))$.  The arrows and their coefficients give the action of $F_1$ on the upper canonical basis.}
\label{f c_111 example upper}
\end{figure}
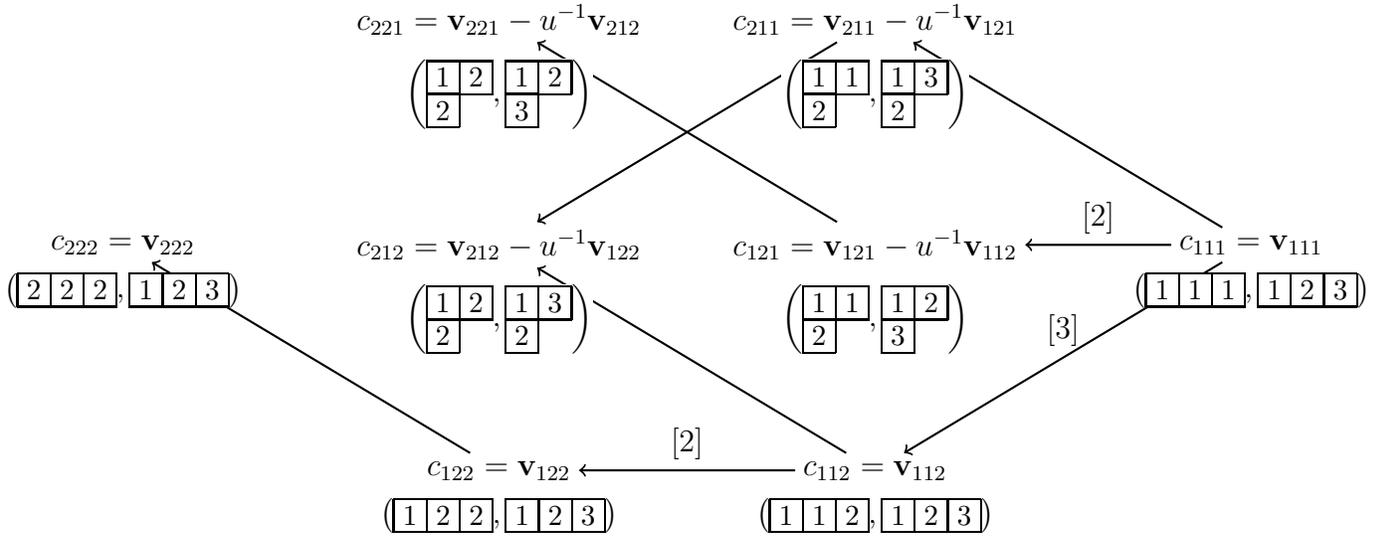

We assemble some equivalent descriptions of the lower canonical basis of $\bT$, which are also shown in \cite{Brundan} and appear in a slightly different form in \cite{GL, FKK}.
\begin{theorem}
\label{t Schur-Weyl duality lower}
The lower canonical basis element $\cp_\mathbf{k}, \, \mathbf{k} \in [n]^r,$ has the following equivalent descriptions
\begin{list}{\emph{(\roman{ctr})}} {\usecounter{ctr} \setlength{\itemsep}{1pt} \setlength{\topsep}{2pt}}
\item the unique $\br{\cdot}$-invariant element of $\bT_{\ZZ[\u]}$ congruent to $\bv_\mathbf{k} \mod \u \bT_{\ZZ[\u]}$;
\item $v_{k_1} \diamond \dots \diamond v_{k_r}$;
\item $G'(b'_\mathbf{k})$, where $G'$ is the inverse of the canonical isomorphism
\[(\QQ \tsr_\ZZ \bT_\mathbf{A}) \cap \L_0\cap\br{\L_0} \xrightarrow{\cong} \L_0 / \u \L_0; \]
\item The image of $\C_{D(\mathbf{k})}$ under the isomorphism in Proposition \ref{p weight space equals induced} ($D(\mathbf{k})$ is a maximal coset representative, defined in  \textsection\ref{ss type A combinatorics preliminaries}).
\end{list}
\end{theorem}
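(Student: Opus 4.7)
The plan is to prove all four descriptions coincide by showing each of (ii), (iii), (iv) implies (i), since the object characterized by (i) is unique: if $x, y \in \bT_{\ZZ[\u]}$ are both $\br{\cdot}$-invariant and congruent to $\bv_\mathbf{k}$ modulo $\u\bT_{\ZZ[\u]}$, then $x - y$ is a $\br{\cdot}$-invariant element of $\u\bT_{\ZZ[\u]}$, which must vanish by the triangular argument used to construct $\C_w$. The first two implications are essentially tautological. For (ii) $\Rightarrow$ (i), iterate Theorem \ref{t tsr product lower canbas} using the associativity of $\diamond$ recorded after that theorem; the iterated product $v_{k_1} \diamond \cdots \diamond v_{k_r}$ is, by construction, $\br{\cdot}$-invariant in $\bT_{\ZZ[\u]}$ and congruent to $\bv_\mathbf{k}$ modulo $\u\bT_{\ZZ[\u]}$. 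For (ii) $\Rightarrow$ (iii), the pair $(\bT, B')$ has already been declared (just before the theorem) a lower based module with balanced triple $(\QQ\tsr_\ZZ\bT_\mathbf{A}, \L_0, \br{\L_0})$, so $\cp_\mathbf{k} = G'(b'_\mathbf{k})$ is immediate from Remark \ref{r balanced triple} and Theorem \ref{t theorem 6.2.2 HK}.

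The substantive work is (iv) $\Rightarrow$ (i). Set $\zeta = \mathrm{content}(\mathbf{k})$ and $J = J_\zeta$. Formula \eqref{e prime C on prime canbas}, together with $R((w_0)_J) = J$, shows that $\mathbf{A}\{\C_{(w_0)_J}\}$ is a cellular submodule of $(\H_J, \Gamma'_{W_J})$ isomorphic to the trivial representation $\epsilon_+$. Proposition \ref{p cell isomorphism induced} applied to $\Lambda' = \{\C_{(w_0)_J}\}$ produces a basis $\{\C_{(w_0)_J \cdot x} : x \in \leftexp{J}{\S_r}\}$ of $\epsilon_+ \tsr_{\H_J} \H_r$, and in particular characterizes $\C_{D(\mathbf{k})} = \C_{(w_0)_J \cdot d(\mathbf{k})}$ as the unique $\br{\cdot}$-invariant element of $\ZZ[\ui]\{\epsilon_+ \tsr T_y : y \in \leftexp{J}{\S_r}\}$ congruent to $\epsilon_+ \tsr T_{d(\mathbf{k})}$ modulo $\ui$ times this lattice. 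Transporting to $\bT^\zeta$ via Proposition \ref{p weight space equals induced}, let $\cp_\mathbf{k}^{(\mathrm{iv})}$ denote the image of $\C_{D(\mathbf{k})}$; bar-invariance is automatic from $\mathbf{A}$-linearity of the iso and \eqref{e br involution on H intertwines}. To verify the lattice/congruence part of (i), apply $\br{\cdot}$ to the characterization above: since $\C_{D(\mathbf{k})}$ is bar-fixed, it is equivalently the unique $\br{\cdot}$-invariant element of $\ZZ[\u]\{\epsilon_+ \tsr \br{T}_y\}$ congruent to $\epsilon_+ \tsr \br{T}_{d(\mathbf{k})}$ modulo $\u$ times this lattice. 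Under the iso of Proposition \ref{p weight space equals induced}, $\epsilon_+ \tsr \br{T}_y$ corresponds to $\bv_{\sort(\mathbf{k}) \cdot y}$, so this lattice coincides with $\bT^\zeta_{\ZZ[\u]}$ and the congruence becomes $\cp_\mathbf{k}^{(\mathrm{iv})} \equiv \bv_\mathbf{k} \bmod \u\bT^\zeta_{\ZZ[\u]}$, which is (i).

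The main obstacle is the reconciliation of the $\ZZ[\ui]$-lattice/mod-$\ui$ convention intrinsic to the lower canonical basis $\{\C_w\}$ of $\H_r$ with the $\ZZ[\u]$-lattice/mod-$\u$ convention in characterization (i); this is precisely what the bar-twist $\br{T}_{d(\mathbf{k})}$ in the iso of Proposition \ref{p weight space equals induced} is engineered to accomplish, since applying $\br{\cdot}$ to a bar-invariant element interchanges the two conventions. A secondary point is to check that $y \mapsto \sort(\mathbf{k}) \cdot y$ carries $\leftexp{J}{\S_r}$ bijectively onto the weight-$\zeta$ words, so that the two relevant $\mathbf{A}$-bases line up across the iso; this is immediate from the definitions of $d$ and $\sort$.
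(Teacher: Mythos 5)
Your proof is correct and follows essentially the same route as the paper: (i) $\Leftrightarrow$ (ii) by iterating Theorem \ref{t tsr product lower canbas} via associativity of $\diamond$, (iii) $\Rightarrow$ (i) from Remark \ref{r balanced triple} and Theorem \ref{t theorem 6.2.2 HK}, and (iv) $\Rightarrow$ (i) by combining Propositions \ref{p cell isomorphism induced} and \ref{p weight space equals induced} with \eqref{e br involution on H intertwines}. The one stylistic difference is in how you obtain the $\ZZ[\u]$-lattice characterization of $\C_{D(\mathbf{k})}$ with respect to the basis $\{\epsilon_+ \tsr \br{T}_y\}$: the paper invokes ``an easy modification of Proposition \ref{p cell isomorphism induced}'' directly, while you derive that modification by applying $\br{\cdot}$ to the unmodified statement and using $\br{\C_{D(\mathbf{k})}} = \C_{D(\mathbf{k})}$; this is a clean way to make the paper's remark precise. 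One small point worth being explicit about: your claim that bar-invariance of the image $\cp_{\mathbf{k}}^{(\mathrm{iv})}$ is ``automatic'' from $\H_r$-linearity of the iso and \eqref{e br involution on H intertwines} also implicitly uses that $\br{\bv_{\sort(\mathbf{k})}} = \bv_{\sort(\mathbf{k})}$ (equivalently, that the iso matches up the bar-fixed cyclic generators $\bv_{\sort(\mathbf{k})} \leftrightarrow \epsilon_+ \tsr 1$); this holds because the quasi-$\R$-matrix acts as the identity on the weakly increasing tensor monomial, but it is not purely formal.
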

\begin{proof}
Description (i) is the definition of (ii) (Theorem \ref{t tsr product lower canbas}) and the element in (iii) is easily seen to satisfy the conditions in (i) (see Remark \ref{r balanced triple} and Theorem \ref{t theorem 6.2.2 HK}). The element in (iv) satisfies the conditions in (i) by Proposition \ref{p cell isomorphism induced} and the combination of Proposition \ref{p weight space equals induced} and \eqref{e br involution on H intertwines}. Note that we are actually applying an easy modification of Proposition \ref{p cell isomorphism induced} with  $\u$ in place of  $\ui$ and  $\C_v \tsr_{\H(W_J)} \br{T}_x$ in place of $\C_v \tsr_{\H(W_J)} T_x $.
\end{proof}

Proposition \ref{p cell isomorphism induced} and the discussion in  \textsection\ref{ss cell label conventions C_Q C'_Q}, results of \cite[Chapter 27]{LBook} (see the discussion at the end of  \textsection\ref{ss global canonical bases}), and the well-known combinatorics of the crystal basis $\B'$ (see e.g. \cite[Chapter 7]{HK}) allow us to determine the $\H_r$- and $\Uq$-cells of $(\bT, B')$.
\begin{corollary}
\label{c Schur-Weyl duality lower}
\
\begin{list}{\emph{(\roman{ctr})}} {\usecounter{ctr} \setlength{\itemsep}{1pt} \setlength{\topsep}{2pt}}
\item The $ \H_r$-module with basis $(\bT, B')$ decomposes into $ \H_r$-cells as $B' = \bigsqcup_{T \in \text{SSYT}_{[n]}^r} \Gamma'_T$, where
$\Gamma'_T = \{\cp_\mathbf{k} : P(\mathbf{k}^\dagger) = T \}.$
\item The $ \H_r$-cell $ \Gamma'_T$ of  $\bT$ is isomorphic to $\Gamma'_{\sh(T)}$.
\item The  $\Uq$-module with basis $(\bT, B')$ decomposes into $\Uq$-cells as $B' = \bigsqcup_{T \in \text{SYT}^r_{\leq n}} \Lambda'_T, $ where
$\Lambda'_T  = \{\cp_\mathbf{k} : Q(\mathbf{k}^\dagger) = T \}.$
\item The $\Uq$-cell $ \Lambda'_T$ is isomorphic to $B'(\sh(T))$.
\end{list}
\end{corollary}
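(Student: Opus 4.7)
The plan is to treat parts (i)--(ii) by transferring the $\H_r$-cell question on $(\bT, B')$ to an induced canonical basis, and to treat (iii)--(iv) by transferring the $\Uq$-cell question to a tensor-product crystal.

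For (i) and (ii), I would first fix a content $\zeta$ and restrict to the weight space $\bT^\zeta$. By Theorem \ref{t Schur-Weyl duality lower}(iv) combined with Proposition \ref{p weight space equals induced}, the right $\H_r$-module with basis $(\bT^\zeta, \{\cp_\mathbf{k}\})$ is identified with the induced module $\epsilon_+ \tsr_{\H(W_{J_\zeta})} \H_r$ equipped with the basis $\{\C_{D(\mathbf{k})}\}$. Since $\epsilon_+$ is realized as the cellular submodule of $\Gamma'_{W_{J_\zeta}}$ spanned by $\C_{(w_0)_{J_\zeta}}$, Proposition \ref{p cell isomorphism induced} embeds this induced basis cellularly into $\Gamma'_{\S_r}$ via $\cp_\mathbf{k} \mapsto \C_{D(\mathbf{k})}$. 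The $\H_r$-cells of $(\bT, B')$ are therefore the preimages of the right cells $\Gamma'_P$ of $\Gamma'_{\S_r}$; using the description $\Gamma'_P = \{\C_w : \transpose{P(w)} = P\}$ from \textsection\ref{ss cell label conventions C_Q C'_Q}, part (i) reduces to the combinatorial identity relating $\transpose{P(D(\mathbf{k}))}$ to $P(\mathbf{k}^\dagger)$, which is a standard RSK calculation on the two-line array of $\mathbf{k}$. For (ii), the cellular isomorphism in Proposition \ref{p cell isomorphism induced} sends $\cp_\mathbf{k}$ to $\C_{d(\mathbf{k})}$, and the fact from \textsection\ref{ss cell label conventions C_Q C'_Q} that each $\Gamma'_P$ with $\sh(P) = \lambda$ is isomorphic to a fixed cell $\Gamma'_\lambda$ then finishes the argument.

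For (iii) and (iv), I would use that $(\bT, B')$ is built by iterating Theorem \ref{t tsr product lower canbas}, so its crystal lattice and $\QQ$-basis at $\u=0$ are the tensor products of those of $V$, and hence the underlying crystal is $\B'(\epsilon_1)^{\tsr r}$. The results of \cite[Ch.~27]{LBook} recalled at the end of \textsection\ref{ss global canonical bases} then imply that the $\Uq$-cellular subquotients of $(\bT, B')$ are inverse images of the connected components of the associated crystal and, for each partition $\lambda$, the $\lambda$-isotypic subquotient is a direct sum of copies of $(V_\lambda, B'(\lambda))$. By standard type-$A$ combinatorics (see \cite[Ch.~7]{HK}), the connected components of $\B'(\epsilon_1)^{\tsr r}$ are parametrized by SYT through the RSK recording tableau $Q(\mathbf{k}^\dagger)$, and each is crystal-isomorphic to $\B'(\sh(T))$; lifting to the global basis $B'$ yields (iii) and (iv).

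The main obstacle is the convention bookkeeping. The coproduct \eqref{e U_q coproduct} and the right $\H_r$-action \eqref{e T inverse_i act on V} reverse the apparent reading order on $\bv_\mathbf{k}$ relative to the usual combinatorial conventions for RSK and for tensor-product crystals, which is the origin of the $\dagger$. Once I carefully check that the Kashiwara operators act on $\bv_\mathbf{k}$ via the tensor-product crystal rule applied to $\mathbf{k}^\dagger$, and simultaneously that $D(\mathbf{k})$ interacts with the right cells of $\Gamma'_{\S_r}$ via $\transpose{P(\mathbf{k}^\dagger)}$, the rest of the proof is an assembly of the cited results with no new ideas required.
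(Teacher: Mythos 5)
Your proposal matches the paper's own (very terse) justification: parts (i)--(ii) by transporting the cell structure through Proposition \ref{p weight space equals induced}, Theorem \ref{t Schur-Weyl duality lower}(iv), and the induction result Proposition \ref{p cell isomorphism induced} into $\Gamma'_{\S_r}$ and then appealing to the RSK description of its right cells from \textsection\ref{ss cell label conventions C_Q C'_Q}; parts (iii)--(iv) via the tensor-product based-module structure, the end of \textsection\ref{ss global canonical bases}, and the standard crystal combinatorics from \cite[Ch.~7]{HK}. One small slip: in your treatment of (ii) you wrote that the cellular isomorphism sends $\cp_\mathbf{k}$ to $\C_{d(\mathbf{k})}$; since $\epsilon_+$ sits inside $\Gamma'_{W_{J_\zeta}}$ as $\mathbf{A}\C_{(w_0)_{J_\zeta}}$, the image is $\C_{(w_0)_{J_\zeta}\cdot d(\mathbf{k})}=\C_{D(\mathbf{k})}$, as you wrote correctly the first time; this is just a transcription error and does not affect the argument.
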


\subsection{Upper canonical basis of $\bT$}
By Theorem \ref{t tsr product upper canbas} and associativity of the $\heart$ product, $(\bT, B)$ is an upper based module with balanced triple $(\QQ \tsr_\ZZ \bT_\mathbf{A}, \br{\L_\infty}, \L_\infty)$, where
\be
\begin{array}{ccl}
\L_\infty &:=& \L_\infty(\epsilon_1) \tsr_{\field_\infty} \dots \tsr_{\field_\infty} \L_\infty(\epsilon_1), \\
\B &:=& \B(\epsilon_1) \times \dots \times \B(\epsilon_1) \subseteq \L_\infty / \ui \L_\infty, \\
\bT_{\ZZ[\ui]} &:=& \ZZ[\ui] \{\bv_\mathbf{k} : \mathbf{k} \in [n]^r\}, \\
B &:=& B(\epsilon_1) \heart \dots \heart B(\epsilon_1).
\end{array}
\ee
We call $B$ the \emph{upper canonical basis} of $\bT$ and, for each  $\mathbf{k} \in [n]^r$, we write $c_\mathbf{k}$ for the element $v_{k_1} \heart \dots \heart v_{k_r} \in B$ and $b_\mathbf{k} \in \B$ for its image in $\L_\infty / \ui \L_\infty$.
Figure \ref{f c_111 example upper} gives the upper canonical basis in terms of the monomial basis for $r = 3, n =2$.

We assemble some equivalent descriptions of the upper canonical basis of $\bT$.  The proof is similar to the corresponding Theorem \ref{t Schur-Weyl duality lower} for the lower canonical basis.
\begin{theorem}
\label{t Schur-Weyl duality upper}
The upper canonical basis element $c_\mathbf{k}, \, \mathbf{k} \in [n]^r,$ has the following equivalent descriptions
\begin{list}{\emph{(\roman{ctr})}} {\usecounter{ctr} \setlength{\itemsep}{1pt} \setlength{\topsep}{2pt}}
\item the unique $\br{\cdot}$-invariant element of $\bT_{\ZZ[\ui]}$, congruent to $\bv_\mathbf{k} \mod \ui\bT_{\ZZ[\ui]}$;
\item $v_{k_1} \heart \dots \heart v_{k_r}$;
\item $G(b_\mathbf{k})$, where $G$ is the inverse of the canonical isomorphism
\[(\QQ \tsr_\ZZ \bT_\mathbf{A}) \cap \br{\L_\infty} \cap \L_\infty \xrightarrow{\cong} \L_\infty / \ui \L_\infty; \]
\item The image of $C_{d(\mathbf{k})}$ under the isomorphism in Proposition \ref{p weight space equals induced}.
\end{list}
\end{theorem}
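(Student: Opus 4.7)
The plan is to mirror the proof of Theorem \ref{t Schur-Weyl duality lower}, substituting throughout the roles of $\u$ and $\ui$, the products $\diamond$ and $\heart$, the lattices $\L_0$ and $\L_\infty$, lower and upper based modules, and replacing the maximal coset representative $D(\mathbf{k})$ with the minimal coset representative $d(\mathbf{k})$.

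For the equivalence (i) $\Leftrightarrow$ (ii), I would invoke Theorem \ref{t tsr product upper canbas} together with the associativity of $\heart$ from \cite[27.3.6]{LBook} to identify $v_{k_1}\heart\cdots\heart v_{k_r}$ as the unique $\br{\cdot}$-invariant element of $\bT_{\ZZ[\ui]}$ congruent to $\bv_\mathbf{k}$ modulo $\ui\bT_{\ZZ[\ui]}$; the two descriptions then coincide by construction.

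For (i) $\Leftrightarrow$ (iii), I would note that $(\bT, B)$ is an upper based module with balanced triple $(\QQ\tsr_\ZZ\bT_\mathbf{A},\br{\L_\infty},\L_\infty)$ (via iteration of Theorem \ref{t tsr product upper canbas}, Remark \ref{r balanced triple}, and Theorem \ref{t theorem 6.2.2 HK}). The element $G(b_\mathbf{k}) \in \bT_\mathbf{A} \cap \br{\L_\infty} \cap \L_\infty$ is automatically $\br{\cdot}$-invariant, since bar interchanges $\L_\infty$ and $\br{\L_\infty}$, and reduces to $b_\mathbf{k} = \bv_\mathbf{k}$ modulo $\ui\L_\infty$. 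The inclusion $\bT_{\ZZ[\ui]} \subseteq \L_\infty$ then translates this into the congruence required by (i).

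For (i) $\Leftrightarrow$ (iv), I would view $\epsilon_+$ as a cellular quotient of $\Gamma_{\S_{J_\zeta}}$ (where $\zeta$ is the content of $\mathbf{k}$) with basis element $C_e = 1$. Applying $\br{\cdot}$ to the upper canonical basis version of Proposition \ref{p cell isomorphism induced} yields an easy modification in which $\u$ is replaced by $\ui$ and $C_v \tsr T_x$ by $C_v \tsr \br{T}_x$; this characterizes $C_{d(\mathbf{k})}$ as the unique $\br{\cdot}$-invariant element of $\ZZ[\ui]\{\epsilon_+ \tsr \br{T}_x : x \in \leftexp{J_\zeta}{\S_r}\}$ congruent to $\epsilon_+ \tsr \br{T}_{d(\mathbf{k})}$ modulo $\ui$ times this lattice. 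Transporting through the isomorphism of Proposition \ref{p weight space equals induced} (which sends $\bv_\mathbf{k} \mapsto \epsilon_+ \tsr \br{T}_{d(\mathbf{k})}$), and invoking \eqref{e br involution on H intertwines} so that the $\br{\cdot}$-involutions correspond, the image of $C_{d(\mathbf{k})}$ in $\bT^\zeta_{\ZZ[\ui]}$ is the unique $\br{\cdot}$-invariant element congruent to $\bv_\mathbf{k}$ modulo $\ui \bT_{\ZZ[\ui]}$; this is (i).

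The main obstacle will be the bookkeeping in the last step: one must verify the $\br{T}_x$-variant of Proposition \ref{p cell isomorphism induced}, and check that under the identification $\bT_\mathbf{A}^\zeta \cong \epsilon_+ \tsr_{\H_{J_\zeta}} \H_r$ both the standard lattice and the $\br{\cdot}$-involution line up correctly. The subtle point is that $\bv_\mathbf{k}$ corresponds to $\epsilon_+ \tsr \br{T}_{d(\mathbf{k})}$, not $\epsilon_+ \tsr T_{d(\mathbf{k})}$, which is precisely why the $\br{T}_x$-variant of Proposition \ref{p cell isomorphism induced} (rather than its original form) is needed.
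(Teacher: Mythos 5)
Your proposal is correct and mirrors the paper's own argument, which simply invokes the proof of Theorem~\ref{t Schur-Weyl duality lower} with the roles of $\u$ and $\ui$, $\diamond$ and $\heart$, $\L_0$ and $\L_\infty$, and $D(\mathbf{k})$ and $d(\mathbf{k})$ exchanged; in particular you correctly identify the key subtlety, namely that the isomorphism of Proposition~\ref{p weight space equals induced} sends $\bv_\mathbf{k}$ to $\epsilon_+ \tsr \br{T}_{d(\mathbf{k})}$, forcing the bar-conjugated variant of Proposition~\ref{p cell isomorphism induced}. The only soft spot is the claim in the (iii) step that $G(b_\mathbf{k})$ is ``automatically $\br{\cdot}$-invariant since bar interchanges $\L_\infty$ and $\br{\L_\infty}$''---that observation alone only shows $\br{G(b_\mathbf{k})}$ lies in the same intersection, not that it equals $G(b_\mathbf{k})$; the cleaner route (and the one implicit in the paper via Remark~\ref{r balanced triple} and Theorem~\ref{t tsr product upper canbas}) is to note that $v_{k_1}\heart\cdots\heart v_{k_r}$ already lies in $\bT_\mathbf{A}\cap\br{\L_\infty}\cap\L_\infty$ with image $b_\mathbf{k}$, so by injectivity of the balanced-triple isomorphism it must coincide with $G(b_\mathbf{k})$.
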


We also have the upper canonical basis version of Corollary \ref{c Schur-Weyl duality lower}.
\begin{corollary}
\label{c Schur-Weyl duality upper}
\
\begin{list}{\emph{(\roman{ctr})}} {\usecounter{ctr} \setlength{\itemsep}{1pt} \setlength{\topsep}{2pt}}
\item The $ \H_r$-module with basis $(\bT, B)$ decomposes into $ \H_r$-cells as $B = \bigsqcup_{T \in \text{SSYT}_{[n]}^r} \Gamma_T$, where
$\Gamma_T = \{\cp_\mathbf{k} : P(\mathbf{k}) = T \}.$
\item The $ \H_r$-cell $ \Gamma_T$ of  $\bT$ is isomorphic to $ \Gamma_{\sh(T)}$.
\item The  $\Uq$-module with basis $(\bT, B)$ decomposes into $\Uq$-cells as $B = \bigsqcup_{T \in \text{SYT}^r_{\leq n}} \Lambda_T, $ where
$\Lambda_T  = \{\cp_\mathbf{k} : Q(\mathbf{k}) = T \}.$
\item The $\Uq$-cell $ \Lambda_T$ is isomorphic to $ B(\sh(T))$.
\end{list}
\end{corollary}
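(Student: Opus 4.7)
The plan is to mirror the proof of Corollary \ref{c Schur-Weyl duality lower}, using the upper variant of each ingredient.

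For parts (i) and (ii): The $\H_r$-action on $\bT$ preserves weight spaces (visible from \eqref{e T inverse_i act on V}), so every right $\H_r$-cell of $(\bT,B)$ is contained in a single weight space $\bT^\zeta$. By Theorem \ref{t Schur-Weyl duality upper}(iv) and Proposition \ref{p weight space equals induced}, the map sending $c_\mathbf{k}$ to the image of $C_{d(\mathbf{k})}$ is a right $\H_r$-module isomorphism $\bT^\zeta_\mathbf{A} \cong \epsilon_+ \tsr_{\H_{J_\zeta}} \H_r$. Applying the upper version of Proposition \ref{p cell isomorphism induced} with $\Lambda = \{C_\text{id}\}$ spanning $\epsilon_+$ as a cellular quotient of $(\H_{J_\zeta}, \Gamma_{\S_{J_\zeta}})$, I would identify $\bT^\zeta_\mathbf{A}$ as a right $\H_r$-module with basis with the cellular subquotient $\mathbf{A}\{C_x : x \in \leftexp{J_\zeta}{\S_r}\}$ of $(\H_r, \Gamma_{\S_r})$. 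By \textsection\ref{ss cell label conventions C_Q C'_Q}, the right cells of $(\H_r, \Gamma_{\S_r})$ are indexed by SYT via the insertion tableau $P(w)$, and all right cells of a given shape $\lambda$ are isomorphic to $\Gamma_\lambda$. The classical compatibility of RSK with standardization---for $\mathbf{k}$ of content $\zeta$, the SSYT $P(\mathbf{k})$ is determined by $\zeta$ together with the SYT $P(d(\mathbf{k}))$, and conversely---then translates this into the decomposition $B = \bigsqcup_{T \in \text{SSYT}_{[n]}^r} \Gamma_T$ of (i), while Proposition \ref{p cell isomorphism induced} supplies the cell isomorphisms of (ii).

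For parts (iii) and (iv): The plan is to use the upper analog of the based module theory of \cite[Chapter 27]{LBook}, developed in \cite[\textsection 5.2]{Kas2}. Since $(\bT, B)$ is an upper based module, each $(\bT[\ld\lambda], B[\ld\lambda])$ is an upper based submodule, and the quotient $(\bT[\ld\lambda]/\bT[\ldneq\lambda],\, B[\ld\lambda]\setminus B[\ldneq\lambda])$ is isomorphic as an upper based module to a direct sum of copies of $(V_\lambda, B(\lambda))$. Hence the $\Uq$-cells of $(\bT, B)$ are a disjoint union of pieces each isomorphic to some $B(\lambda)$. To match these pieces with $\Lambda_T = \{c_\mathbf{k} : Q(\mathbf{k}) = T\}$ for $T \in \text{SYT}^r_{\leq n}$, I would invoke the standard combinatorics of the tensor product crystal $\B = \B(\epsilon_1)^{\times r}$ (see, e.g., \cite[Chapter 7]{HK}): under the Kashiwara operators $\crystalu{E}_i, \crystalu{F}_i$, its connected components are indexed by the recording tableau $Q(\mathbf{k})$, and the component labeled by $T$ is isomorphic to $\B(\sh(T))$. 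Since connected components of the crystal graph correspond precisely to $\Uq$-cells of an upper based module, the claim follows.

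The principal obstacle lies in (i), specifically the RSK compatibility: that $P(\mathbf{k})$ for a word $\mathbf{k}$ of content $\zeta$ is determined by $\zeta$ and $P(d(\mathbf{k}))$, and that this bijection matches Knuth equivalence of words with that of their minimal coset representatives. This is classical tableau combinatorics, but it must be invoked explicitly to convert the SYT-indexed cell structure inherited from $\H_r$ into the SSYT-indexed decomposition asserted in the statement. Once this is in hand, everything else is transport of structure via the isomorphisms already established.
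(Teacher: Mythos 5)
Your proposal is correct and follows essentially the same route the paper intends: the paper derives this corollary as the ``upper'' mirror of Corollary~\ref{c Schur-Weyl duality lower}, whose proof is sketched as precisely the combination you use---Proposition~\ref{p cell isomorphism induced} together with Theorem~\ref{t Schur-Weyl duality upper}(iv) and Proposition~\ref{p weight space equals induced} for the $\H_r$-cells, the cell labeling of \textsection\ref{ss cell label conventions C_Q C'_Q} plus RSK standardization-compatibility for (i)--(ii), and the based-module results of \cite[Chapter 27]{LBook}/\cite[\textsection 5.2]{Kas2} plus standard tensor-product crystal combinatorics for (iii)--(iv). The additional care you take in identifying $\epsilon_+$ as a cellular \emph{quotient} of $(\H_{J_\zeta},\Gamma_{\S_{J_\zeta}})$ (so that the induced cellular subquotient is spanned by minimal coset representatives $\{C_x : x\in\leftexp{J_\zeta}{\S_r}\}$) is exactly the right adaptation from the lower case.
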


\subsection{A symmetric bilinear form on $\bT$}
There is a bilinear form  $(\cdot, \cdot)$ on $\bT$ under which the upper and lower canonical basis are dual and satisfies several other nice properties.
Let  $\dagger$ (resp. $\dagger^\text{op}$) be the automorphism (resp. antiautomorphism) of $\H_r$ determined by $T_i^{\dagger} = T_{n-i}$ (resp. $T_i^{\dagger^\text{op}} = T_{n-i}$).
\begin{propdef}\cite{Brundan}
\label{p dual bases}
There is a unique symmetric bilinear form $(\cdot, \cdot)$ on $\bT$ satisfying
\begin{list}{\emph{(\roman{ctr})}} {\usecounter{ctr} \setlength{\itemsep}{1pt} \setlength{\topsep}{2pt}}
\item  $(x \bv,\bv')= (\bv,\varphi(x) \bv')$ for any  $x \in \Uq$,  $\bv, \bv' \in \bT$,
\item  $(\mathbf{v} h,\bv')= (\bv,\bv' h^{\dagger^\text{op}})$ for any  $h \in \H_r$,  $\bv, \bv' \in \bT$,
\item  $(\bv_\mathbf{k}, \br{\bv}_{\mathbf{l}^\dagger}) = \delta_{\mathbf{k}, \mathbf{l}}$,
\item  $(c_\mathbf{k}, \cp_{\mathbf{l}^\dagger}) = \delta_{\mathbf{k}, \mathbf{l}}$.
\end{list}
\end{propdef}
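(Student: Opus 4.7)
My plan is to establish uniqueness via a triangularity argument, then construct the form explicitly by extending a natural form on $V$ to $\bT$, following Brundan. The bar involution on $\bT$ is $\field$-antilinear and, via the quasi-$R$-matrix $\Theta$, unitriangular on the standard basis $\{\bv_\mathbf{k}\}$. Hence $\{\br{\bv}_{\mathbf{l}^\dagger}\}_\mathbf{l}$ is an $\field$-basis of $\bT$, and condition (iii) specifies the pairing between two $\field$-bases, uniquely determining any bilinear form satisfying it; uniqueness of the symmetric form follows a fortiori.

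For existence, I start with the symmetric form $(v_i, v_j) := \delta_{ij}$ on $V$. Property (i) on $V$ follows from a direct check on generators using $\varphi(E_i)=F_i$, $\varphi(F_i)=E_i$, $\varphi(K_i)=K_i$ together with the explicit action on the weight basis. I then extend inductively to $\bT = V^{\tsr r}$: given a symmetric, $\varphi$-contravariant form on $V^{\tsr(r-1)}$, define the form on $V^{\tsr r}$ by imposing (iii), and use Lusztig's defining identity $\Delta(\br{x}) = \Theta \cdot \br{\Delta(x)} \cdot \Theta^{-1}$ to propagate symmetry and $\varphi$-contravariance across the tensor product; this identity absorbs the non-cocommutativity of $\Delta$. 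Property (ii) is then a direct check on $T_i^{-1}$ using the three cases of \eqref{e T inverse_i act on V}: the reversal $\mathbf{l}\mapsto\mathbf{l}^\dagger$ in the second argument converts the right action of $T_i^{-1}$ into that of $T_{r-i}^{-1}$, exactly matching $\dagger^\text{op}$.

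Property (iv) then follows by a bar-invariance argument. Both $c_\mathbf{k}$ and $\cp_{\mathbf{l}^\dagger}$ are bar-invariant, so $(c_\mathbf{k}, \cp_{\mathbf{l}^\dagger})$ is bar-invariant in $\field$. Expanding $c_\mathbf{k}$ in the $\{\bv_\mathbf{m}\}$-basis with coefficients in $\ZZ[\ui]$ (diagonal term $1$) and writing $\cp_{\mathbf{l}^\dagger}$ as a $\ZZ[\u]$-combination of $\{\br{\bv}_\mathbf{m}\}$ via its bar-invariance (diagonal term $1$ in position $\mathbf{l}^\dagger$), then applying (iii) term-by-term, shows $(c_\mathbf{k}, \cp_{\mathbf{l}^\dagger}) \equiv \delta_{\mathbf{k},\mathbf{l}} \pmod{\ui\ZZ[\ui]}$. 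Since no nonzero bar-invariant Laurent polynomial lies in $\ui\ZZ[\ui]$, equality holds on the nose.

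The main obstacle is verifying that the inductively defined form on $\bT$ is genuinely symmetric and $\varphi$-contravariant; both rest on delicate compatibility properties of $\Theta$ under $\varphi \tsr \varphi$ and the bar involution established in \cite{LBook}. Brundan's paper \cite{Brundan} adapts these identities to our coproduct convention, and I would follow his proof closely.
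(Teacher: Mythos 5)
The paper states this Proposition-Definition with only a citation to \cite{Brundan} and gives no proof of its own, and your proposal correctly reconstructs Brundan's line of argument: uniqueness via the unitriangularity of the bar involution (so $\{\br{\bv}_{\mathbf{l}^\dagger}\}$ is a basis and (iii) pins the form down), existence by starting from the standard form on $V$ and invoking the compatibility of the quasi-$R$-matrix $\Theta$ with $\varphi\tsr\varphi$ and with $\br{\cdot}$, and (iv) by the standard bar-invariance-plus-unitriangularity argument. One small slip: in the proof of (iv), by bar-invariance $\cp_{\mathbf{l}^\dagger}$ is a $\ZZ[\ui]$-combination (not $\ZZ[\u]$) of the $\br{\bv}_\mathbf{m}$ with off-diagonal coefficients in $\ui\ZZ[\ui]$, which is what you actually need; and the phrase ``define the form on $V^{\tsr r}$ by imposing (iii)'' makes the inductive step sound circular---what one really does is check that the (uniquely determined) form of (iii) is symmetric and $\varphi$- and $\dagger^{\text{op}}$-contravariant using identities for $\Theta$, which you acknowledge as the delicate part.
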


\section{Projected canonical bases}
\label{s projected canonical bases}

Here we give several equivalent definitions of the projected counterparts of the lower and upper canonical basis of $\bT$.  This will be used in the next section to help us understand the transition matrices discussed in the introduction.
Note that by quantum Schur-Weyl duality (Theorem \ref{c Schur-Weyl duality basic}), $\varsigma_\lambda^\bT = s_\lambda^\bT$, $\iota_\lambda^\bT = i_\lambda^\bT$, and $\pi_\lambda^\bT = p_\lambda^\bT$.

\subsection{Projected upper canonical basis}
\label{ss projected upper canonical basis}

For some of our descriptions of the projected upper canonical basis, we need an integral form that is different from $\bT_\mathbf{A}$.
First note that by Corollary \ref{c Schur-Weyl duality upper} and \cite[\textsection 5.2]{Kas2},
\be \label{e bT lambda fact}
\bT[ \ld \lambda] = \field \{c_\mathbf{k}: \sh(\mathbf{k}) \ld \lambda\} \text{ and } \bT[ \ldneq \lambda] = \field \{c_\mathbf{k}: \sh(\mathbf{k}) \ldneq \lambda\}.
\ee
Further, applying $\varsigma_\lambda^\bT$ to the upper based module  $(\bT[ \ld \lambda],\{c_\mathbf{k}: \sh(\mathbf{k}) \ld \lambda\})$ yields the upper based module $(\bT[\lambda],\{\varsigma_\lambda^\bT(c_\mathbf{k}): \sh(\mathbf{k}) = \lambda\})$ with balanced triple
\be \label{e lambda triple}
(\QQ \tsr_\ZZ \bT_\mathbf{A}[\ld \lambda]/ \bT_\mathbf{A}[\ldneq \lambda], \br{\L_\infty[\ld \lambda]}/\br{\L_\infty[\ldneq \lambda]}, \L_\infty[\ld \lambda]/ \L_\infty[\ldneq \lambda]),
\ee
where $\bT_\mathbf{A}[\ld \lambda]$ and $\L_\infty[\ld \lambda]$ (resp. $\bT_\mathbf{A}[\ldneq \lambda]$ and $\L_\infty[\ldneq \lambda]$) are the $\mathbf{A}$- and $\field_\infty$- span of  $\{c_\mathbf{k} : \sh(\mathbf{k}) \ld \lambda\}$ (resp. $\{c_\mathbf{k} : \sh(\mathbf{k}) \ldneq \lambda\}$).
Finally, define
\be \label{e T lambda definition upper}
\begin{array}{ccl}
(\bT_\mathbf{A})_\lambda & := & \pi_\lambda^\bT(\bT_\mathbf{A}[\ld \lambda]), \\
\L_{\infty \lambda} & := & \pi_\lambda^\bT(\L_\infty[\ld \lambda]), \\
\tilde{\bT}_\mathbf{A} & := & \bigoplus_\lambda (\bT_\mathbf{A})_\lambda.
\end{array}
\ee

Our next theorem is similar to results in \cite{GL} and \cite[\textsection 7]{Brundan}; those of \cite{GL} are proved using geometric methods, and those of \cite[\textsection 7]{Brundan} using results of \cite{Kas2, LBook} as is done here.
\begin{theorem}
\label{t lifted upper canonical basis}
Maintain the notation above and let $\mathbf{l} \in [n]^r$ and $\lambda = \sh(\mathbf{l})$. Set $\mathbf{j} = \text{RSK}^{-1}(Z_\lambda,Q(\mathbf{l}))$, where $Z_\lambda$ is the superstandard tableau of shape $\lambda$ (see  \textsection\ref{ss type A combinatorics preliminaries}).
Let $V_{Q(\mathbf{l})} = \Uq c_\mathbf{j}$ and $V^{\QQ \text{ up}}_{Q(\mathbf{l})}$ be the  $\QQA$-form of $V_{Q(\mathbf{l})}$ as in  \textsection\ref{ss global canonical bases}.
Then the triples in (b) and (c) are balanced and the projected upper canonical basis element $\liftc_{\mathbf{l}}$ has the following descriptions
\begin{list}{\emph{(\alph{ctr})}} {\usecounter{ctr} \setlength{\itemsep}{1pt} \setlength{\topsep}{2pt}}
\item the unique  $\br{\cdot}$-invariant element of $\tilde{\bT}_\mathbf{A}$ congruent to $\bv_\mathbf{l} \mod \ui \L_\infty$,
\item $\tilde{G}(b_{\mathbf{l}})$, where $\tilde{G}$ is the inverse of the canonical isomorphism
\[(\QQ \tsr_\ZZ\tilde{\bT}_\mathbf{A}) \cap \br{\L_{\infty}}\cap\L_{\infty}  \xrightarrow{\cong} \L_{\infty} / \ui \L_{\infty}, \]
\item $\tilde{G}_\lambda(\pi_\lambda(b_{\mathbf{l}}))$, where $\tilde{G}_\lambda$ is the inverse of the canonical isomorphism
\[(\QQ \tsr_\ZZ (\bT_\mathbf{A})_\lambda) \cap \br{\L_{\infty\lambda}}\cap \L_{\infty\lambda}  \xrightarrow{\cong} \L_{\infty\lambda} / \ui \L_{\infty\lambda}, \]
\item the global crystal basis element $G_\lambda(b_{P(\mathbf{l})})$ of $V_{Q(\mathbf{l})}$,
\item $\pi_\lambda^\bT (c_{\mathbf{l}})$,
\item $p_\lambda^\bT (c_\mathbf{l})$.
\end{list}
Then $\tilde{B} := \{\liftc_\mathbf{k}: \mathbf{k}\in [n]^r\}$ is the \emph{projected upper canonical basis of $\bT$} and $(\bT, \tilde{B})$ is an upper based module. Its $\Uq$- and  $\H_r$-cells are given by Corollary \ref{c Schur-Weyl duality upper} with $\liftc$  in place of $c$.
\end{theorem}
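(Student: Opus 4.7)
The plan is to use $\pi_\lambda^\bT(c_\mathbf{l})$ as the pivot element and verify that it satisfies each of the six characterizations. The equivalence (e)$=$(f) is immediate from quantum Schur-Weyl duality (Corollary \ref{c Schur-Weyl duality basic}), which identifies the $\Uq$-isotypic projector $\pi_\lambda^\bT$ with the $\H_r$-isotypic projector $p_\lambda^\bT$. For the balanced triples in (b) and (c): since $\bT[\ld\lambda]$ and $\bT[\ldneq\lambda]$ are spanned by subsets of $B$ by \eqref{e bT lambda fact}, they are upper based submodules of $(\bT, B)$; applying the \cite{Kas2}, \S5.2 analog of Lusztig's results on quotients of based modules (cited at the end of \textsection\ref{ss global canonical bases}), the quotient $\bT[\ld\lambda]/\bT[\ldneq\lambda]$ is an upper based module with balanced triple \eqref{e lambda triple}, isomorphic to a direct sum of copies of $V_\lambda$ with their upper global crystal bases. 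Transporting this triple through the $\Uq$-isomorphism $\iota_\lambda^\bT$ onto its image $\bT[\lambda]\subseteq \bT$ gives the triple in (c), and direct-summing over $\lambda$ yields the triple in (b).

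Next I would verify that $x := \pi_\lambda^\bT(c_\mathbf{l})$ satisfies (a), (c), and (e). By construction $x \in (\bT_\mathbf{A})_\lambda \subseteq \tilde{\bT}_\mathbf{A}$. It is $\br{\cdot}$-invariant because each $\Uq$-isotypic component $\bT[\mu]$ is closed under $\br{\cdot}$ (the involution sends a $\Uq$-submodule of type $V_\mu$ to another of the same type, since $\br{E_i} = E_i, \br{F_i} = F_i$). To see $x \equiv \bv_\mathbf{l} \bmod \ui\L_\infty$: Kashiwara's theorem that any upper crystal basis decomposes into isotypic pieces gives $\L_\infty = \bigoplus_\mu \L_\infty[\mu]$, so $\pi_\lambda$ preserves $\L_\infty$, and $c_\mathbf{l} - x$ is the sum of the $\mu$-components of $c_\mathbf{l}$ for $\mu \ldneq \lambda$; but $c_\mathbf{l} \equiv b_\mathbf{l} \bmod \ui\L_\infty$ and $b_\mathbf{l}$ lies entirely in the $\lambda$-component (since $\sh(\mathbf{l}) = \lambda$ and the crystal decomposition matches the $\Uq$-cell labels of Corollary \ref{c Schur-Weyl duality upper}), so these cross-components already lie in $\ui\L_\infty$. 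Thus $x$ satisfies (a), with uniqueness furnished by the balanced triple in (b); that $x = \tilde{G}_\lambda(\pi_\lambda(b_\mathbf{l}))$ then gives (c), and $x = \pi_\lambda^\bT(c_\mathbf{l})$ is (e).

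For (d), since $P(\mathbf{j}) = Z_\lambda$ means $b_\mathbf{j}$ is the highest weight vertex of its $\Uq$-crystal component, $c_\mathbf{j}$ is a $\br{\cdot}$-invariant highest weight vector of weight $\lambda$, so $V_{Q(\mathbf{l})} = \Uq c_\mathbf{j} \cong V_\lambda$ sits inside $\bT[\lambda]$. Under the decomposition of the upper based module $\bT[\lambda]$ into based-module copies of $V_\lambda$ furnished by the first step (one summand for each $Q \in \text{SYT}^r_{\leq n}$ of shape $\lambda$, as labeled by Corollary \ref{c Schur-Weyl duality upper}), $V_{Q(\mathbf{l})}$ is the summand containing $\liftc_\mathbf{j}$, and its upper global crystal basis element indexed by the SSYT $P(\mathbf{l})$ coincides with $x = \liftc_\mathbf{l}$, proving (d). The cell structure of $(\bT, \tilde B)$ then follows from Corollary \ref{c Schur-Weyl duality upper} since $\pi_\lambda^\bT$ is both $\Uq$- and $\H_r$-equivariant and so preserves the cell-labeling combinatorics. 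The main obstacle I anticipate is the compatibility in the last step --- matching the abstract based-submodule decomposition of $\bT[\lambda]$ obtained from the quotient construction with the concrete $\Uq$-submodules $V_{Q(\mathbf{l})}$ sitting inside $\bT$ and indexed by recording tableaux via Schur-Weyl duality; once this identification is made, all six descriptions reduce to the characterization of $\pi_\lambda^\bT(c_\mathbf{l})$ via the balanced triple.
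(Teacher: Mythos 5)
Your proposal follows essentially the same route as the paper: pivot on $\pi_\lambda^\bT(c_\mathbf{l})$, use the passage to the cellular subquotient $\bT[\ld\lambda]/\bT[\ldneq\lambda]$ and the quotient-based-module results of \cite[\S5.2]{Kas2} for (c)$=$(d)$=$(e), and get (a) from (b) via $\br{\cdot}$-invariance. Two remarks. First, your argument for $\br{\cdot}$-invariance of $\pi_\lambda^\bT(c_\mathbf{l})$ (via $\br{\cdot}$-stability of the $\Uq$-isotypic components of $\bT$) is a valid variant of the paper's, which instead argues that the central idempotent $p_\lambda \in \field\H_r$ is $\br{\cdot}$-invariant because the $\br{\cdot}$-involution permutes minimal central idempotents. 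Second, the step you state as ``direct-summing over $\lambda$ yields the triple in (b)'' is exactly where the paper spends the most care: one must prove the \emph{equality} of upper crystal bases
\[
(\L_\infty, \B) = \bigoplus_{\mu \vdash_n r} \bigl(\L_{\infty\mu},\ \{\pi_\mu(b_\mathbf{k}) : \sh(\mathbf{k}) = \mu\}\bigr),
\]
not merely an isomorphism, so that $\L_\infty = \bigoplus_\mu \L_{\infty\mu}$ with $\L_{\infty\mu} = \pi_\mu^\bT(\L_\infty[\ld\mu])$. You later invoke Kashiwara's isotypic decomposition of $\L_\infty$, but that yields $\L_\infty = \bigoplus_\mu(\L_\infty \cap \bT[\mu])$, and the identification $\L_\infty \cap \bT[\mu] = \L_{\infty\mu}$ still needs to be argued; the paper does this via uniqueness of upper crystal bases together with the observation that both lattices restrict to $\field_\infty\{c_\mathbf{k} : P(\mathbf{k}) = Z_\mu\}$ on the highest weight subspace $\{x \in \bT^\mu : E_i x = 0\}$. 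This matching is the one substantive point you should add to close the argument.
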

\begin{proof}
The triple in (c) is just the injective image of the triple in \eqref{e lambda triple} under $\iota_\lambda^\bT$, hence the triple in (c) is balanced and the elements in (c) and (e) are the same.  As noted earlier, $\pi_\lambda^\bT = p_\lambda^\bT$, so the elements in (e) and (f) are the same.

By \cite[\textsection 5.2]{Kas2} (see the discussion at the end of \textsection\ref{ss global canonical bases}) and Corollary \ref{c Schur-Weyl duality upper}, there is an isomorphism of upper based modules
\[
(\bT[\lambda],\{\varsigma_\lambda^\bT(c_\mathbf{k}): \sh(\mathbf{k}) = \lambda\})\cong \bigoplus_{Q \in \text{SYT}(\lambda)}(V_\lambda^Q, B^Q(\lambda)), \quad \varsigma_\lambda^\bT(c_\mathbf{k}) \mapsto G_\lambda^{Q(\mathbf{k})}(b^{Q(\mathbf{k})}_{P(\mathbf{k})}), 
\]
where  $V_\lambda^Q, B^Q(\lambda),$ etc. denote copies of  $V_\lambda,B(\lambda),$ etc. indexed by $ Q$.  It follows that the elements in (c) and (d) are the same.

To see that the triple in (b) is balanced and that the elements in (b) and (c) are the same, we must show that the triple in (b) is the direct sum
$ \bigoplus_{\mu \vdash_n r}(\QQ \tsr_\ZZ (\bT_\mathbf{A})_\mu , \br{\L_{\infty\mu}}, \L_{\infty\mu})$ of triples of the form in (c).
This amounts to showing the equality of upper crystal bases (we need that this is an equality, not just an isomorphism)
\be \label{e L infinity equality}
(\L_\infty, \B) = \bigoplus_{\mu \vdash_n r} (\L_{\infty\mu}, \{\pi_\mu(b_\mathbf{k}):\sh(\mathbf{k})=\mu\}).
\ee
This follows from the uniqueness of upper crystal bases and the fact that the restriction of both sides of \eqref{e L infinity equality} to $\{x \in \bT^\mu : E_i x = 0 \text{ for all } i \in [n-1]\}$ is
$(\field_\infty \{c_{\mathbf{k}}: P(\mathbf{k}) = Z_\mu\}, \{b_{\mathbf{k}}: P(\mathbf{k}) = Z_\mu\})$.

Finally, we can show that the element in (a) is the same as the other descriptions. The minimal central idempotent  $p_\lambda$ is $\br{\cdot}$-invariant. This follows from the  $\br{\cdot}$-invariance of the upper canonical basis of $\H_r$ and the fact that an algebra involution must yield an involution of minimal central idempotents. Thus $p_\lambda^\bT (c_\mathbf{l})$ is $\br{\cdot}$-invariant and, by description (b), it satisfies the other requirements of (a). The uniqueness in (a) was not clear a priori but is now clear because the triple in (b) is balanced.

The statements about $(\bT, \tilde{B})$ are clear from (e), (f) and Corollary \ref{c Schur-Weyl duality upper}.
\end{proof}

\begin{figure}
\begin{tikzpicture}[xscale = 5,yscale = 3]
\tikzstyle{vertex}=[inner sep=0pt, outer sep=3pt, fill = white]
\tikzstyle{edge} = [draw, thick, ->,black]
\tikzstyle{LabelStyleH} = [text=black, anchor=south]
\tikzstyle{LabelStyleV} = [text=black, anchor=east]

    \node[vertex] (111) at (3,0) {$\liftc_{111} = c_{111} $};

    \node[vertex] (112) at (2,-1) {$\liftc_{112} = \atop c_{112}+ \frac{[2]}{[3]}c_{121} + \frac{1}{[3]}c_{211} $};
    \node[vertex] (121) at (2, 0) {$\liftc_{121} = c_{121}$};
    \node[vertex] (211) at (2, 1) {$\liftc_{211}= c_{211}$};

    \node[vertex] (122) at (1, -1) {$\liftc_{122}= \atop c_{122} + \frac{[2]}{[3]}c_{212} + \frac{1}{[3]}c_{221}$};
    \node[vertex] (212) at (1 , 0) {$\liftc_{212}  = c_{212}$};
    \node[vertex] (221) at (1, 1) {$\liftc_{221}= c_{221}$};

    \node[vertex] (222) at (0, 0) {$\liftc_{222}= c_{222} $};

\draw[edge] (111) to node[LabelStyleH]{$[3]$} (112);
\draw[edge] (112) to node[LabelStyleH]{$[2]$} (122);
\draw[edge] (121) to node[LabelStyleH]{$ $} (221);
\draw[edge] (211) to node[LabelStyleH]{$ $} (212);
\draw[edge] (122) to node[LabelStyleH]{$ $} (222);

\foreach \y in {-.2}{
  \node[vertex] (t111) at (3,0+\y) {\small $\left(\tableau{ 1 & 1 & 1}, \tableau{1 & 2 & 3}\right)$};

    \node[vertex] (t112) at (2,-1+1.3*\y) {\small $\left(\tableau{ 1 & 1 & 2}, \tableau{1 & 2 & 3}\right)$};
    \node[vertex] (t121) at (2, 0+1.65*\y) {\small $\left(\tableau{ 1 & 1 \cr 2}, \tableau{1 & 2 \cr 3}\right)$};
    \node[vertex] (t211) at (2, 1+1.65*\y) {\small $\left(\tableau{ 1 & 1 \cr 2}, \tableau{1 & 3\cr 2}\right)$};

    \node[vertex] (t122) at (1, -1+1.3*\y) {\small $\left(\tableau{ 1 & 2 & 2}, \tableau{1 & 2 & 3}\right)$};
    \node[vertex] (t212) at (1 , 0 +1.65*\y) {\small $\left(\tableau{ 1 & 2\cr 2}, \tableau{1 & 3\cr 2}\right)$};
    \node[vertex] (t221) at (1, 1 +1.65*\y) {\small $\left(\tableau{ 1 & 2\cr 2}, \tableau{1 & 2 \cr 3}\right)$};

    \node[vertex] (t222) at (0, 0+\y) {\small $\left(\tableau{2 & 2 & 2}, \tableau{1 & 2 & 3}\right)$};
}
\end{tikzpicture}
\caption{The projected upper canonical basis elements of Theorem \ref{t lifted upper canonical basis} for  $r=3, n=2$.  The pairs of tableaux are of the form $(P(\mathbf{k}), Q(\mathbf{k}))$.  The arrows and their coefficients give the action of $F_1$ on the projected upper canonical basis.}
\label{f liftc_111 example upper}
\end{figure}
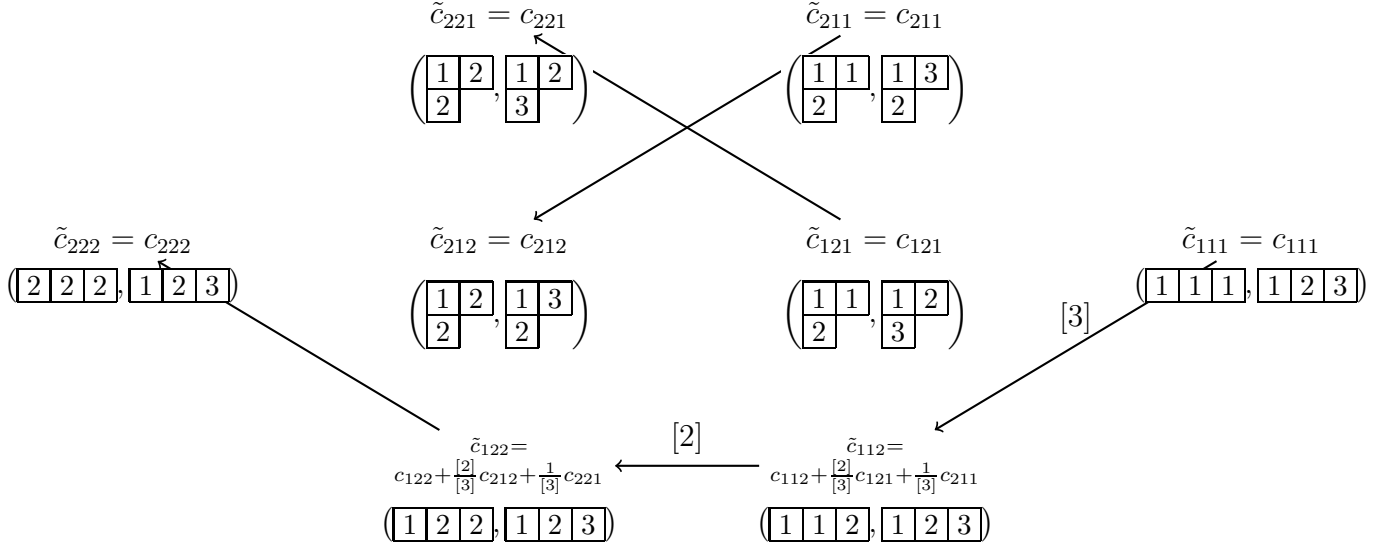

\begin{figure}
\begin{tikzpicture}[xscale = 5,yscale = 3]
\tikzstyle{vertex}=[inner sep=0pt, outer sep=3pt, fill = white]
\tikzstyle{edge} = [draw, thick, ->,black]
\tikzstyle{LabelStyleH} = [text=black, anchor=south]
\tikzstyle{LabelStyleV} = [text=black, anchor=east]
    \node[vertex] (111) at (2.8,0) {$\liftcp_{111} = \cp_{111} $};

    \node[vertex] (112) at (2,-1) {$\liftcp_{112} = \cp_{112} - \frac{1}{[3]}\cp_{211}$};
    \node[vertex] (121) at (2, 0) {$\liftcp_{121} = \cp_{121} - \frac{[2]}{[3]} \cp_{211} $};
    \node[vertex] (211) at (2, 1) {$\liftcp_{211}= \cp_{211}$};

    \node[vertex] (122) at (1, -1) {$\liftcp_{122}= \cp_{122} - \frac{1}{[3]}\cp_{221} $};
    \node[vertex] (212) at (1 , 0) {$\liftcp_{212}  = \cp_{212} -\frac{[2]}{[3]} \cp_{221} $};
    \node[vertex] (221) at (1, 1) {$\liftcp_{221}= \cp_{221}$};

    \node[vertex] (222) at (0, 0) {$\liftcp_{222}= \cp_{222} $};

\draw[edge] (111) to node[LabelStyleH]{$ $} (211);
\draw[edge] (211) to node[LabelStyleH]{$[2]$} (221);
\draw[edge] (221) to node[LabelStyleH]{$[3]$} (222);
\draw[edge] (121) to node[LabelStyleH]{$ $} (122);
\draw[edge] (112) to node[LabelStyleH]{$ $} (212);

\foreach \y in {-.2}{
  \node[vertex] (t111) at (2.8,0+1.45*\y) {\small $\left(\tableau{ 1 & 1 & 1}, \tableau{1 & 2 & 3}\right)$};

    \node[vertex] (t112) at (2,-1 +1.65*\y) {\small $\left(\tableau{ 1 & 1 \cr 2}, \tableau{1 & 3 \cr 2}\right)$};
    \node[vertex] (t121) at (2, 0+1.65*\y) {\small $\left(\tableau{ 1 & 1 \cr 2}, \tableau{1 & 2 \cr 3}\right)$};
    \node[vertex] (t211) at (2, 1+1.45*\y) {\small $\left(\tableau{ 1 & 1 & 2}, \tableau{1 & 2 & 3}\right)$};

    \node[vertex] (t122) at (1, -1 + 1.65*\y) {\small $\left(\tableau{ 1 & 2\cr 2}, \tableau{1 & 2 \cr 3}\right)$};
    \node[vertex] (t212) at (1 , 0 +1.65*\y) {\small $\left(\tableau{ 1 & 2\cr 2}, \tableau{1 & 3\cr 2}\right)$};
    \node[vertex] (t221) at (1, 1+1.45*\y) {\small $\left(\tableau{ 1 & 2 & 2}, \tableau{1 & 2 & 3}\right)$};

    \node[vertex] (t222) at (0, 0+1.45*\y) {\small $\left(\tableau{2 & 2 & 2}, \tableau{1 & 2 & 3}\right)$};
}
\end{tikzpicture}
\caption{The projected lower canonical basis elements of Theorem \ref{t lifted lower canonical basis} for  $r=3, n=2$.  The pairs of tableaux are of the form $(P(\mathbf{k}^\dagger), Q(\mathbf{k}^\dagger))$.  The arrows and their coefficients give the action of $F_1$ on the projected lower canonical basis.}
\label{f liftc 111 example lower}
\end{figure}
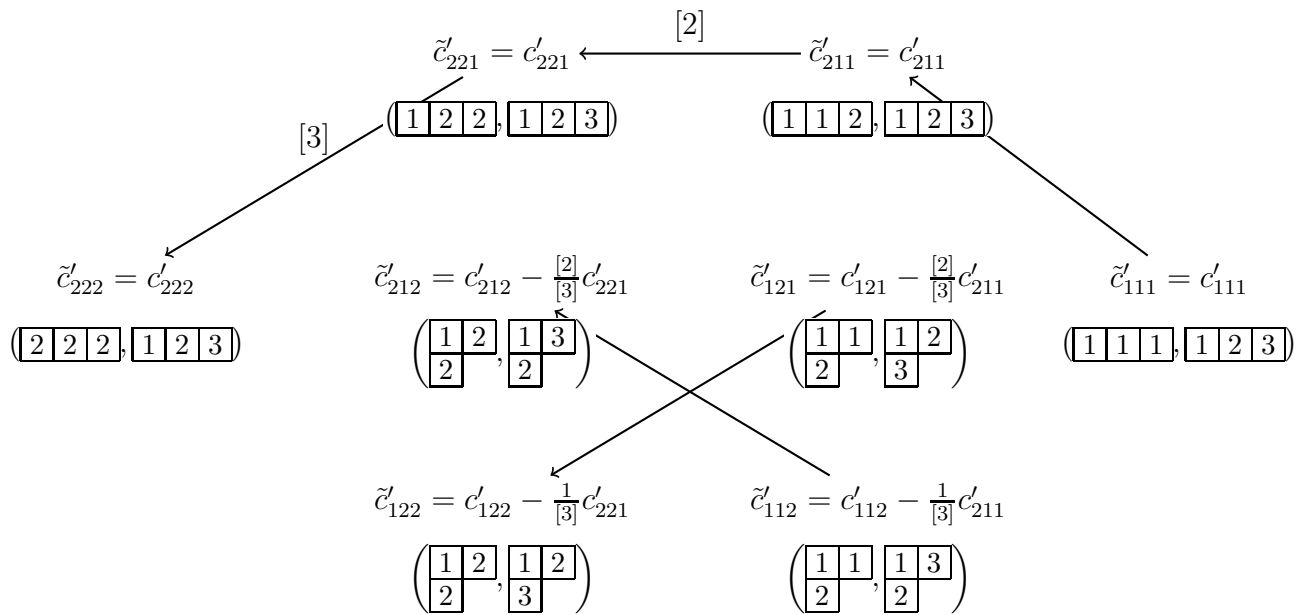

\subsection{Projected lower canonical basis}
Our equivalent descriptions of  the projected lower canonical basis are similar to those for the upper, with some minor changes.
By Corollary \ref{c Schur-Weyl duality lower} and \cite[Proposition 27.1.8]{LBook},
\be \label{e bT lambda fact lower}
\bT[ \gd \lambda] = \field \{\cp_\mathbf{k}: \sh(\mathbf{k}^\dagger) \gd \lambda\} \text{ and } \bT[ \gdneq \lambda] = \field \{\cp_\mathbf{k}: \sh(\mathbf{k}^\dagger) \gdneq \lambda\}.
\ee
Let $\bT_\mathbf{A}[\gd \lambda]$ and $\L_0[\gd \lambda]$ be the $\mathbf{A}$- and $\field_0$- span of  $\{\cp_\mathbf{k} : \sh(\mathbf{k}^\dagger) \gd \lambda\}$ and define
\be \label{e T lambda definition lower}
\begin{array}{ccl}
(\bT_\mathbf{A})'_\lambda & := & \pi_\lambda^\bT(\bT_\mathbf{A}[\gd \lambda]), \\
\L_{0 \lambda} & := & \pi_\lambda^\bT(\L_0[\gd \lambda]), \\
\tilde{\bT}'_\mathbf{A} & := & \bigoplus_\lambda (\bT_\mathbf{A})'_\lambda.
\end{array}
\ee

\begin{theorem}
\label{t lifted lower canonical basis}
Maintain the notation above and let $\mathbf{l} \in [n]^r$ and $\lambda = \sh(\mathbf{l}^\dagger)$. Set $\mathbf{j} = \text{RSK}^{-1}(Z_\lambda,Q(\mathbf{l}^\dagger))$.
Let $V_{Q(\mathbf{l}^\dagger)} = \Uq \pi_\lambda^\bT(\cp_\mathbf{j})$ and $V^{\QQ \text{ low}}_{Q(\mathbf{l}^\dagger)}$ be the  $\QQA$-form of $V_{Q(\mathbf{l}^\dagger)}$ as in  \textsection\ref{ss global canonical bases}.
Then the triples in (b) and (c) are balanced and the projected lower canonical basis element $\liftcp_{\mathbf{l}}$ has the following descriptions
\begin{list}{\emph{(\alph{ctr})}} {\usecounter{ctr} \setlength{\itemsep}{1pt} \setlength{\topsep}{2pt}}
\item the unique  $\br{\cdot}$-invariant element of $\tilde{\bT}'_\mathbf{A}$ congruent to $\bv_\mathbf{l} \mod \u \L_0$,
\item $\tilde{G}'(b_{\mathbf{l}})$, where $\tilde{G}'$ is the inverse of the canonical isomorphism
\[(\QQ \tsr_\ZZ \tilde{\bT}'_\mathbf{A}) \cap \L_{0} \cap \br{\L_{0}}  \xrightarrow{\cong} \L_{0} / \u \L_{0}, \]
\item $\tilde{G}'_\lambda(\pi_\lambda(b_{\mathbf{l}}))$, where $\tilde{G}'_\lambda$ is the inverse of the canonical isomorphism
\[(\QQ \tsr_\ZZ (\bT_\mathbf{A})'_\lambda) \cap \L_{0\lambda}\cap \br{\L_{0\lambda}}  \xrightarrow{\cong} \L_{0\lambda} / \u \L_{0\lambda}, \]
\item the global crystal basis element $G'_\lambda(b_{P(\mathbf{l}^\dagger)})$ of $V_{Q(\mathbf{l}^\dagger)}$,
\item $\pi_\lambda^\bT (\cp_{\mathbf{l}})$,
\item $p_\lambda^\bT (\cp_\mathbf{l})$.
\end{list}
Then $\tilde{B'} := \{\liftcp_\mathbf{k}: \mathbf{k}\in [n]^r\}$ is the \emph{projected lower canonical basis of $\bT$} and $(\bT, \tilde{B'})$ is a lower based module. Its $\Uq$- and  $\H_r$-cells are given by Corollary \ref{c Schur-Weyl duality lower} with $\liftcp$  in place of $\cp$.
\end{theorem}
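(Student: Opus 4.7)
The plan is to mirror the proof of Theorem \ref{t lifted upper canonical basis}, systematically replacing upper-case objects ($c_\mathbf{k}$, $\L_\infty$, $\heart$, $\ld$) by their lower-case counterparts ($\cp_\mathbf{k}$, $\L_0$, $\diamond$, $\gd$), and using $\sh(\mathbf{k}^\dagger)$ in place of $\sh(\mathbf{k})$. This sign reversal is forced by Theorem \ref{t right cell partial order}, and has already been absorbed into \eqref{e bT lambda fact lower} and the definitions \eqref{e T lambda definition lower}.

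First I would show that (c), (d), (e), and (f) describe the same element. The equality $\pi_\lambda^\bT = p_\lambda^\bT$ is immediate from quantum Schur-Weyl duality (Corollary \ref{c Schur-Weyl duality basic}), giving (e)=(f). Applying $\varsigma_\lambda^\bT$ to the lower based module $(\bT[\gd\lambda],\{\cp_\mathbf{k}:\sh(\mathbf{k}^\dagger)\gd\lambda\})$ produces, by the results of \cite[Chapter 27]{LBook} quoted at the end of \textsection\ref{ss global canonical bases}, a lower based module with balanced triple
\[
(\bT_\mathbf{A}[\gd\lambda]/\bT_\mathbf{A}[\gdneq\lambda],\ \L_0[\gd\lambda]/\L_0[\gdneq\lambda],\ \br{\L_0[\gd\lambda]}/\br{\L_0[\gdneq\lambda]}).
\]
Pushing back into $\bT[\lambda]$ via $\iota_\lambda^\bT$ gives the triple in (c) and shows it is balanced. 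Combined with Corollary \ref{c Schur-Weyl duality lower} this yields an isomorphism of lower based modules
\[
(\bT[\lambda],\{\varsigma_\lambda^\bT(\cp_\mathbf{k}):\sh(\mathbf{k}^\dagger)=\lambda\}) \cong \bigoplus_{Q\in\text{SYT}(\lambda)}(V_\lambda^Q, B'^{Q}(\lambda)),
\]
sending $\varsigma_\lambda^\bT(\cp_\mathbf{k})$ to the lower global crystal basis element indexed by $(P(\mathbf{k}^\dagger),Q(\mathbf{k}^\dagger))$; this identifies (c) with (d).

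Next, to verify that the triple in (b) is balanced and produces the same element, I would establish the equality of lower crystal bases (not merely isomorphism)
\[
(\L_0,\B') = \bigoplus_{\mu\vdash_n r}(\L_{0\mu},\ \{\pi_\mu^\bT(b'_\mathbf{k}):\sh(\mathbf{k}^\dagger)=\mu\}).
\]
By Kashiwara's uniqueness of lower crystal bases this reduces to checking the restriction to the highest-weight subspace $\{x\in\bT^\lambda:E_i x=0\ \forall i\}$, where both sides collapse to $(\field_0\{\cp_\mathbf{k}:P(\mathbf{k}^\dagger)=Z_\lambda\},\{b'_\mathbf{k}:P(\mathbf{k}^\dagger)=Z_\lambda\})$ by Corollary \ref{c Schur-Weyl duality lower}. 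This is the direct lower-case analogue of the identity \eqref{e L infinity equality} used in the upper case.

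Finally, for description (a) I would note that $p_\lambda$ is $\br{\cdot}$-invariant: the bar involution of $\H_r$ fixes its canonical bases, so it must permute the minimal central idempotents, and each isotypic component is preserved. Hence $p_\lambda^\bT(\cp_\mathbf{l})$ is $\br{\cdot}$-invariant, lies in $\tilde{\bT}'_\mathbf{A}$ by (e), and is congruent to $\bv_\mathbf{l}$ modulo $\u\L_0$ because $\cp_\mathbf{l}$ is. Uniqueness of the element satisfying (a) then follows from the balancedness of the triple in (b), which was established in the previous step. The only subtlety that requires real care throughout is keeping the dominance direction straight---filtrations are by $\sh(\mathbf{k}^\dagger)$ with $\gd$ rather than by $\sh(\mathbf{k})$ with $\ld$---but once this is observed, the argument is a faithful transcription of the proof of Theorem \ref{t lifted upper canonical basis}.
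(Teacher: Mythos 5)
Your transcription is careful about the dominance-order direction, but it silently imports a step from the upper case that \emph{fails} in the lower case. You claim that both sides of the proposed lattice equality restrict on $\bT^\lambda_{\text{hw}} := \{x\in\bT^\lambda:E_i x=0\ \forall i\}$ to $\bigl(\field_0\{\cp_\mathbf{k}:P(\mathbf{k}^\dagger)=Z_\lambda\},\{b'_\mathbf{k}:P(\mathbf{k}^\dagger)=Z_\lambda\}\bigr)$. But the $\cp_\mathbf{k}$ with $P(\mathbf{k}^\dagger)=Z_\lambda$ are not in $\bT^\lambda_{\text{hw}}$. This is exactly the asymmetry flagged in the paper's proof: in the upper case one has $\pi_\lambda^\bT(c_\mathbf{j})=c_\mathbf{j}$ when $P(\mathbf{j})=Z_\lambda$, so those elements are honest highest weight vectors and the naive restriction argument works; but in the lower case $\pi_\lambda^\bT(\cp_\mathbf{j})\neq\cp_\mathbf{j}$ in general. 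Concretely, for $r=3$, $n=2$ one has $P(112^\dagger)=Z_{(2,1)}$ yet $\liftcp_{112}=\cp_{112}-\tfrac{1}{[3]}\cp_{211}\neq\cp_{112}$ (see Figure \ref{f liftc 111 example lower}), so $E_1\cp_{112}\neq 0$ and $\cp_{112}\notin\bT^{(2,1)}_{\text{hw}}$. Hence $\L_0\cap\bT^\lambda_{\text{hw}}$ is \emph{not} $\field_0\{\cp_\mathbf{k}:P(\mathbf{k}^\dagger)=Z_\lambda\}$, and the ``both sides collapse'' claim is false as stated.

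The fix used in the paper is to compare the two lattices not by computing $\L_0\cap\bT^\mu_{\text{hw}}$ directly, but by passing through the isotypic projection: since $\varsigma_\mu^\bT$ restricts to an isomorphism $\bT^\mu_{\text{hw}}\xrightarrow{\cong}\bT[\mu]^\mu$, one checks the equality of \emph{images}
\[
\varsigma_\mu^\bT\bigl(\L_0\cap\bT^\mu_{\text{hw}}\bigr) \;=\; \field_0\{\varsigma_\mu^\bT(\cp_\mathbf{k}):\sh(\mathbf{k}^\dagger)=\mu\} \;=\; \varsigma_\mu^\bT\bigl(\L_{0\mu}\cap\bT^\mu_{\text{hw}}\bigr),
\]
which then forces $\L_0\cap\bT^\mu_{\text{hw}}=\L_{0\mu}\cap\bT^\mu_{\text{hw}}$, and uniqueness of lower crystal lattices yields $\L_0=\bigoplus_\mu\L_{0\mu}$. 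Everything else in your transcription is sound; it is only this one step that requires the extra maneuver rather than a literal substitution.
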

\begin{proof}
The proof is similar to that of Theorem  \ref{t lifted upper canonical basis}, using results of \cite[Chapter 27]{LBook} in place of \cite[\textsection 5.2]{Kas2}.  Slightly more care is needed to prove that
\be \label{e L 0 equality}
\L_0 = \bigoplus_{\mu \vdash_n r} \L_{0\mu}
\ee
since $\pi_\lambda^\bT(c_\mathbf{j}) = c_\mathbf{j}$, whereas $\pi_\lambda^\bT(\cp_\mathbf{j}) \neq \cp_\mathbf{j}$ in general.
However,  uniqueness of lower crystal lattices is still enough:
uniqueness means that both sides of \eqref{e L 0 equality} are determined by their intersection with $\bT^\mu_\text{hw} := \{x \in \bT^\mu : E_i x = 0 \text{ for all } i \in [n-1]\}$ for all $\mu \vdash_n r$.  Since $\varsigma_\mu^\bT$ restricts to an isomorphism $\bT^\mu_\text{hw} \xrightarrow{\cong} \bT[\mu]^\mu$ and
\be
\varsigma_\mu^\bT(\L_0 \cap \bT^\mu_\text{hw}) = \field_0 \{\varsigma_\mu^\bT(\cp_\mathbf{k}): P(\mathbf{k}^\dagger) = Z_\mu \} = \varsigma_\mu^\bT(\L_{0\mu} \cap \bT^\mu_\text{hw}),
\ee
we have  $\L_0 \cap \bT^\mu_\text{hw}= \L_{0\mu} \cap \bT^\mu_\text{hw}$. The equality \eqref{e L 0 equality} follows.
\end{proof}

The most interesting part of  Theorems  \ref{t lifted upper canonical basis} and \ref{t lifted lower canonical basis} for us  is that, though the integral form  needed for the  upper  (resp. lower) canonical basis  and projected     upper  (resp. lower) canonical basis differ,  the  upper  (resp. lower) crystal lattices  are the same.   This has the following consequence.

\begin{corollary}
\label{c transition matrix projected to canbas}
The transition matrix from the projected upper (resp. lower) canonical basis to the upper (resp. lower) canonical basis is unitriangular and is the identity at  $\u = 0$ and  $\u = \infty$. Precisely,
\[
\begin{array}{ccl}
\liftc_\mathbf{k} &=& c_\mathbf{k} + \sum_{\sh(\mathbf{k}') \ldneq \sh(\mathbf{k})} t_{\mathbf{k}'\mathbf{k}} c_{\mathbf{k}'}, \\
\liftcp_\mathbf{k} &=& \cp_{\mathbf{k}} + \sum_{\sh(\mathbf{k}'^\dagger) \gdneq \sh(\mathbf{k}^\dagger)} t'_{\mathbf{k}'\mathbf{k}} \cp_{\mathbf{k}'},
\end{array}
\]
where the coefficients  $t_{\mathbf{k}'\mathbf{k}}, t'_{\mathbf{k}'\mathbf{k}}$ are $\br{\cdot}$-invariant and belong to $\u \field_0 \cap \ui \field_\infty$.
\end{corollary}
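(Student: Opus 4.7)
The plan is to derive both statements directly from two facts proved in Theorems \ref{t lifted upper canonical basis} and \ref{t lifted lower canonical basis}: that $\liftc_\mathbf{k} = p_\lambda^\bT(c_\mathbf{k})$ and $\liftcp_\mathbf{k} = p_\lambda^\bT(\cp_\mathbf{k})$, and (more importantly) that the upper (resp.\ lower) crystal lattice of $\bT$ coincides whether computed from the canonical basis or from its projected counterpart. The corollary is essentially a triangularized repackaging of this coincidence together with the paper's $\br{\cdot}$-symmetry exchanging $\field_0$ and $\field_\infty$.

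First I would establish the unitriangular shape in the upper case. Setting $\lambda = \sh(\mathbf{k})$, equation \eqref{e bT lambda fact} shows $c_\mathbf{k} \in \bT[\ld\lambda]$ and that the component of $c_\mathbf{k}$ in $\bT[\ldneq\lambda]$ is a $\field$-linear combination of the basis elements $c_{\mathbf{k}'}$ with $\sh(\mathbf{k}') \ldneq \lambda$. Since $\liftc_\mathbf{k} = p_\lambda^\bT(c_\mathbf{k})$ kills precisely that component, the difference $c_\mathbf{k} - \liftc_\mathbf{k}$ lies in the $\field$-span of $\{c_{\mathbf{k}'} : \sh(\mathbf{k}') \ldneq \lambda\}$, giving the claimed unitriangular form. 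The lower case is identical after replacing \eqref{e bT lambda fact} by \eqref{e bT lambda fact lower} and $\ldneq$ by $\gdneq$.

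Next I would bound the coefficients. By Theorem \ref{t Schur-Weyl duality upper}(iii) together with Theorem \ref{t lifted upper canonical basis}(b), both $c_\mathbf{k}$ and $\liftc_\mathbf{k}$ lie in $\L_\infty$ and reduce to the same element $b_\mathbf{k}$ in $\L_\infty/\ui\L_\infty$, so $\liftc_\mathbf{k} - c_\mathbf{k} \in \ui\L_\infty = \ui\field_\infty\{c_{\mathbf{k}'}\}$; reading off coefficients gives $t_{\mathbf{k}'\mathbf{k}} \in \ui\field_\infty$. Independently, $c_\mathbf{k}$ and $\liftc_\mathbf{k}$ are both $\br{\cdot}$-invariant and $\br{c_{\mathbf{k}'}} = c_{\mathbf{k}'}$, so each $t_{\mathbf{k}'\mathbf{k}}$ is $\br{\cdot}$-invariant. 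Since the bar involution sends $\ui\field_\infty$ to $\u\field_0$, any bar-invariant element of $\ui\field_\infty$ also lies in $\u\field_0$, yielding $t_{\mathbf{k}'\mathbf{k}} \in \u\field_0 \cap \ui\field_\infty$. The lower case proceeds symmetrically using Theorems \ref{t Schur-Weyl duality lower}(iii) and \ref{t lifted lower canonical basis}(b): the chain $\cp_\mathbf{k} \equiv b'_\mathbf{k} \equiv \liftcp_\mathbf{k} \pmod{\u\L_0}$ forces $t'_{\mathbf{k}'\mathbf{k}} \in \u\field_0$, and the opposite inclusion $t'_{\mathbf{k}'\mathbf{k}} \in \ui\field_\infty$ then follows from bar-invariance.

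The main obstacle is organizational rather than mathematical: all the real content has been absorbed into the earlier theorems asserting that the integral forms $\bT_\mathbf{A}$ and $\tilde{\bT}_\mathbf{A}$ (resp.\ $\tilde{\bT}'_\mathbf{A}$) induce the same upper (resp.\ lower) crystal lattice. Once that is granted, one merely has to verify that bar-invariance automatically upgrades the one-sided congruence coming from the crystal lattice to the two-sided membership in $\u\field_0 \cap \ui\field_\infty$, and this is a short formal computation.
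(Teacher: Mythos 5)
Your proposal is correct and follows essentially the same route as the paper's proof: unitriangularity via \eqref{e bT lambda fact} and \eqref{e bT lambda fact lower}, the one-sided bound $t_{\mathbf{k}'\mathbf{k}}\in\ui\field_\infty$ (resp.\ $t'_{\mathbf{k}'\mathbf{k}}\in\u\field_0$) from the shared crystal lattice, and $\br{\cdot}$-invariance of both bases to upgrade to membership in $\u\field_0\cap\ui\field_\infty$. The only cosmetic difference is that you invoke descriptions (b)/(iii) where the paper cites description (a); these give the same congruence mod $\ui\L_\infty$.
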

\begin{proof}
The constraints on dominance order follow from \eqref{e bT lambda fact} and \eqref{e bT lambda fact lower} using the expressions (e) of Theorems \ref{t lifted upper canonical basis} and \ref{t lifted lower canonical basis} for the projected canonical basis elements.  By the expression (a) of Theorem \ref{t lifted upper canonical basis},  $c_\mathbf{k} \equiv \liftc_\mathbf{k} \mod \ui \L_\infty$. Thus $t_{\mathbf{k}'\mathbf{k}} \in \ui \field_\infty$.  Since the upper canonical basis and projected upper canonical basis are $\br{\cdot}$-invariant,
so are the entries of the transition matrix between them.
This further implies $t_{\mathbf{k}'\mathbf{k}} \in \u \field_0 \cap \ui \field_\infty$.
The proof for the lower canonical basis is similar.
\end{proof}

\subsection{Projected canonical bases are dual under the bilinear form}
\begin{proposition}
\label{p dual lifted bases}
The projected upper and lower canonical basis of $\bT$ are dual under $(\cdot, \cdot)$: there holds $(\liftc_\mathbf{k}, \liftcp_{\mathbf{l}^\dagger}) = \delta_{\mathbf{k}, \mathbf{l}}$ for all $\mathbf{k}, \mathbf{l} \in [n]^r$.
\end{proposition}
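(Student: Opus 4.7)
The plan is to reduce $(\liftc_\mathbf{k}, \liftcp_{\mathbf{l}^\dagger}) = \delta_{\mathbf{k},\mathbf{l}}$ to the non-projected identity of Proposition-Definition \ref{p dual bases}(iv) using two ingredients: orthogonality of distinct isotypic components of $\bT$ under the form, and the triangular expansions of Corollary \ref{c transition matrix projected to canbas}.

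First I would establish that $(\bT[\lambda], \bT[\mu]) = 0$ whenever $\lambda \neq \mu$. Using property (ii), for $\bv \in \bT[\lambda]$ and $\bv' \in \bT[\mu]$,
\[
(\bv, \bv') = (\bv p_\lambda, \bv') = (\bv, \bv' p_\lambda^{\dagger^\text{op}}).
\]
A short check shows $p_\lambda^{\dagger^\text{op}} = p_\lambda$: the antiautomorphism $\dagger^\text{op}$ factors as the inner automorphism $x \mapsto T_{w_0} x T_{w_0}^{-1}$ (which induces $T_i \mapsto T_{r-i}$) composed with the antiautomorphism $T_w \mapsto T_{w^{-1}}$, and both act trivially on the center---the first because it is inner, the second because $\S_r$ is ambivalent so every conjugacy-class sum is preserved. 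Hence $(\bv, \bv' p_\lambda) = (\bv, \bv' p_\mu p_\lambda) = 0$.

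If $\sh(\mathbf{k}) \neq \sh(\mathbf{l})$, description (e) of Theorems \ref{t lifted upper canonical basis} and \ref{t lifted lower canonical basis} places $\liftc_\mathbf{k} \in \bT[\sh(\mathbf{k})]$ and $\liftcp_{\mathbf{l}^\dagger} \in \bT[\sh(\mathbf{l})]$, so the pairing vanishes and matches $\delta_{\mathbf{k},\mathbf{l}} = 0$. If $\sh(\mathbf{k}) = \sh(\mathbf{l}) = \lambda$, then Corollary \ref{c transition matrix projected to canbas} combined with \eqref{e bT lambda fact} and \eqref{e bT lambda fact lower} yields
\[
\liftc_\mathbf{k} - c_\mathbf{k} \in \bT[\ldneq \lambda], \qquad \liftcp_{\mathbf{l}^\dagger} - \cp_{\mathbf{l}^\dagger} \in \bT[\gdneq \lambda],
\]
while $c_\mathbf{k} \in \bT[\ld \lambda]$ and $\cp_{\mathbf{l}^\dagger} \in \bT[\gd \lambda]$. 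Since no partition $\mu$ is simultaneously $\ld \lambda$ and $\gdneq \lambda$ (nor symmetrically), the isotypic orthogonality from the previous step kills every cross-term when one expands $(\liftc_\mathbf{k}, \liftcp_{\mathbf{l}^\dagger})$ bilinearly, leaving $(c_\mathbf{k}, \cp_{\mathbf{l}^\dagger}) = \delta_{\mathbf{k},\mathbf{l}}$ by Proposition-Definition \ref{p dual bases}(iv).

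The only non-routine point is the identity $p_\lambda^{\dagger^\text{op}} = p_\lambda$; the rest is bookkeeping with the dominance-order stratifications already at hand. If one prefers to sidestep analyzing $\dagger^\text{op}$ on the center of $\H_r$, the same isotypic orthogonality can be extracted from the $\Uq$-side via property (i), by choosing a $\varphi$-invariant central element of $\Uq$ (for instance a suitable polynomial in the quantum Casimir) whose eigenvalues separate the partitions $\lambda$, and running the analogous separation-of-scalars argument.
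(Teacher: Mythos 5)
Your proof is correct and takes essentially the same route as the paper: isotypic orthogonality from the $\H_r$-adjointness in Proposition-Definition \ref{p dual bases}(ii) handles $\sh(\mathbf{k}) \neq \sh(\mathbf{l})$, while unitriangularity from Corollary \ref{c transition matrix projected to canbas} together with duality of the unprojected bases handles $\sh(\mathbf{k}) = \sh(\mathbf{l})$. One small caution on the identity $p_\lambda^{\dagger^\text{op}} = p_\lambda$: the assertion that $T_w \mapsto T_{w^{-1}}$ fixes the center of $\H_r$ should rest on the self-duality of $M_\lambda$ over $\field$ (for instance by specialization from the ambivalent group at $\u = 1$, or from the existence of a symmetric invariant form), since naive conjugacy-class sums $\sum_{w \in C} T_w$ are \emph{not} central in the Hecke algebra and the group-algebra argument does not transfer verbatim.
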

\begin{proof}
Let $\mathbf{k},\mathbf{l} \in [n]^r$ and $\lambda = \sh(\mathbf{k}), \mu = \sh(\mathbf{l})$.  If $\lambda = \mu$, then by the unitriangularities established in Corollary \ref{c transition matrix projected to canbas} together with the fact that the upper canonical basis is dual to the lower canonical basis, we have $(\liftc_\mathbf{k}, \liftcp_{\mathbf{l}^\dagger}) = \delta_{\mathbf{k}, \mathbf{l}}$.  In the case  $\lambda \neq \mu$, we use Proposition-Definition \ref{p dual bases} (ii) to conclude
\be (\liftc_\mathbf{k}, \liftcp_{\mathbf{l}^\dagger}) = ( \liftc_\mathbf{k} p_\lambda, \liftcp_{\mathbf{l}^\dagger} p_\mu)
= ( \liftc_\mathbf{k}, \liftcp_{\mathbf{l}^\dagger} p_\mu p_\lambda^{\dagger^\text{op}})
= ( \liftc_\mathbf{k}, \liftcp_{\mathbf{l}^\dagger} p_\mu p_\lambda )
= 0.
\ee
\end{proof}

\section{Consequences for the canonical bases of $M_\lambda$}
\label{s Consequences for the canonical bases of M_lambda}
We use the results of the previous section to understand projected canonical bases of $\H_r$ and the relation between the upper and lower canonical basis of $ M_\lambda$.  We will come across several transition matrices whose entries lie in $\field_0$ and are the identity at  $\u = 0$.  Define, for an element $f \in \u\field_0$, the \emph{leading coefficient} of $f$, denoted $\mu(f)$, to be the coefficient of $\u$ in the power series expansion of $f$.  It turns out that the leading coefficients of  many of these transition matrix entries coincide with the $\S_r$-graph edge weights $\mu(v,w)$.

\subsection{Projected canonical bases of $\H_r$}
\label{ss projected canonical bases of the Hecke algebra}
Here we define projected canonical bases of $\H_r$, which are essentially a special case of the projected canonical bases in the previous section.  For each $w \in \S_r$, the projected upper (resp. lower) canonical basis element $\liftC_w \in \H_r$ (resp.  $\liftCp_w$) is defined to be $C_w p_\lambda$ (resp.  $\C_w p_\lambda$), where $\lambda = \sh(w)$ (resp.  $\lambda = \sh(w^\dagger)$).

\begin{corollary}
\label{c lift at u = 0 in H}
The transition matrix $\liftT = (\liftT_{w'w})_{w',w \in \S_r}$ (resp. $\liftT' = (\liftT'_{w'w})_{w',w \in \S_r}$) expressing the projected upper (resp. lower) canonical basis of $\H_r$ in terms of the upper (resp. lower) canonical basis of $\H_r$
\begin{list}{\emph{(\roman{ctr})}} {\usecounter{ctr} \setlength{\itemsep}{1pt} \setlength{\topsep}{2pt}}
\item is unitriangular: $\liftC_w = C_w + \sum_{\sh(w') \ldneq \sh(w)} \liftT_{w'w} C_{w'}$

(resp. $\liftCp_w = \C_w + \sum_{\sh(w^{\prime\dagger}) \gdneq \sh(w^\dagger)} \liftT'_{w'w} \C_{w'}$),
\item has entries that are $\br{\cdot}$-invariant and belong to $\field_0 \cap \field_\infty$,
\item is the identity at  $\u = 0$ and  $\u = \infty$,
\item satisfies: $\mu(\liftT_{w'w}) = \mu(w',w)$ (resp. $\mu(\liftT'_{w'w}) = -\mu(w',w)$) for $w',w$ such that $P(w') \neq P(w)$ and $R(w') \setminus R({w}) \neq \emptyset$.
\end{list}
\end{corollary}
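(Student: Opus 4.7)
The plan is to identify $\H_r$ with a weight space of $\bT = V^{\tsr r}$ for $n = r$. Specifically, for $\zeta = (1^r)$ we have $J_\zeta = \emptyset$, and Proposition \ref{p weight space equals induced} gives an isomorphism of right $\H_r$-modules $\bT_\mathbf{A}^{(1^r)} \xrightarrow{\cong} \H_r$ sending $\bv_\mathbf{k} \mapsto \br{T}_\mathbf{k}$ (viewing the permutation word $\mathbf{k}$ as a permutation of $[r]$). By descriptions (iv) of Theorems \ref{t Schur-Weyl duality upper} and \ref{t Schur-Weyl duality lower}, this isomorphism carries $c_\mathbf{k} \mapsto C_\mathbf{k}$ and $\cp_\mathbf{k} \mapsto \C_\mathbf{k}$; since $p_\lambda^\bT$ is right multiplication by the central idempotent $p_\lambda$, the projected bases also correspond, so $\liftc_\mathbf{k} \mapsto \liftC_\mathbf{k}$ and $\liftcp_\mathbf{k} \mapsto \liftCp_\mathbf{k}$. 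Parts (i), (ii), and (iii) follow immediately from Corollary \ref{c transition matrix projected to canbas}.

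For part (iv), fix $w, w'$ with $\sh(w) = \lambda$, $P(w') \neq P(w)$, and $s \in R(w') \setminus R(w)$. The case $\sh(w') \not\ldneq \lambda$ is trivial: $\liftT_{w'w} = 0$ by (i), while $\mu(w', w) > 0$ would force $w' \kloneq{\Gamma_{\S_r}} w$ via \eqref{e C on canbas} and hence $\sh(w') \ldneq \lambda$ by Theorem \ref{t right cell partial order}, a contradiction. So assume $\sh(w') \ldneq \lambda$. The key step is to compute $\liftC_w C_s = C_w C_s p_\lambda$ in two ways. Using \eqref{e C on canbas} and the fact that $C_{w''} p_\lambda = 0$ whenever $\sh(w'') \ldneq \lambda$ (because $C_{w''}$ lies in the cellular submodule $\mathbf{A}\{C_v : \sh(v) \ld \sh(w'')\}$, whose composition factors are all $M_\nu$ with $\nu \ldneq \lambda$, so the $M_\lambda$-isotypic projection vanishes), the action simplifies to
\[ \liftC_w C_s = \sum_{\substack{w'': s \in R(w''),\\ \sh(w'') = \lambda}} \mu(w'', w)\, \liftC_{w''}. \]
Expanding each $\liftC_{w''}$ via (i), the coefficient of $C_{w'}$ on this side is a sum $\sum_{w''} \mu(w'', w)\, \liftT_{w' w''}$ with each $\liftT_{w' w''} \in \u \field_0$, hence vanishes at $\u = 0$.

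On the other hand, expanding $\liftC_w$ first via (i) and then applying $C_s$ using \eqref{e C on canbas}, the coefficient of $C_{w'}$ becomes $\mu(w', w) - [2]\liftT_{w'w} + \sum_{w^{(3)}} \liftT_{w^{(3)} w}\, \mu(w', w^{(3)})$. The trailing sum vanishes at $\u = 0$, and the product $[2]\liftT_{w'w} = (\u + \ui)\liftT_{w'w}$ lies in $\field_0$ with value $\mu(\liftT_{w'w})$ at $\u = 0$ (using $\liftT_{w'w} \in \u \field_0$). Equating the two expressions at $\u = 0$ yields $\mu(\liftT_{w'w}) = \mu(w', w)$. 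The lower case is analogous via \eqref{e prime C on prime canbas}; the opposite sign of $[2]$ in that formula produces $\mu(\liftT'_{w'w}) = -\mu(w', w)$.

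The main obstacle is justifying the simplified action formula above, which requires understanding how the central idempotent $p_\lambda$ interacts with the cellular filtration $F_\mu := \mathbf{A}\{C_v : \sh(v) \ld \mu\}$; this in turn rests on Theorem \ref{t right cell partial order} together with a Wedderburn-style count of composition factors in $\field F_\mu$. Once this simplification is in place, the leading-term comparison at $\u = 0$ is routine.
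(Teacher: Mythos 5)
Your proof is correct and follows essentially the same route as the paper's: identify $\H_r$ with the weight space $\bT^{(1^r)}$ for $n=r$ via Proposition \ref{p weight space equals induced}, obtain (i)–(iii) as a special case of Corollary \ref{c transition matrix projected to canbas}, and prove (iv) by expanding $\liftC_w C_s$ two ways and comparing the coefficient of $C_{w'}$ modulo $\u\field_0$. The one cosmetic difference is the justification of the simplified action formula: you show directly that $C_{w''}p_\lambda=0$ when $\sh(w'')\ldneq\lambda$ by combining Theorem \ref{t right cell partial order} with the cellular filtration, whereas the paper cites the fact (from Theorem \ref{t lifted upper canonical basis}) that $\mathbf{A}\{\liftC_{w'}: P(w')=P(w)\}$ is isomorphic as a module with basis to the cellular subquotient $\mathbf{A}\Gamma_{P(w)}$; these amount to the same thing.
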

\begin{proof}
Choose $n = r$ and set $\varepsilon = \epsilon_1 + \dots + \epsilon_r$. Then by Proposition \ref{p weight space equals induced}, Theorem \ref{t Schur-Weyl duality lower} (iv), and Theorem \ref{t Schur-Weyl duality upper} (iv), $\H_r \cong \bT_\mathbf{A}^{\varepsilon}$ as right $\H_r$-modules and under this isomorphism canonical bases are sent to canonical bases and projected canonical bases are sent to projected canonical bases.  Thus (i)-(iii) are a special case of Corollary \ref{c transition matrix projected to canbas}.

To prove (iv), we compute $\liftC_w C_s$, for $w \in \S_r$ such that $s \notin R(w)$, in terms of the upper canonical basis in two different ways:
\be
\liftC_w C_s = (\sum_{w'} \liftT_{w'w} C_{w'}) C_s = -[2]\sum_{\{w' :s \in R({w'})\}} \liftT_{w'w}C_{w'} + \sum_{\stackrel{w',w''}{s \not\in R({w'}), s \in R({w''})}} \liftT_{w'w}\mu(w'', w')C_{w''}.
\ee
On the other hand, since  $\mathbf{A} \{\liftC_{w'}: P(w') = P(w)\}$ and the cellular subquotient $\mathbf{A}\Gamma_{P(w)}$ are isomorphic as modules with basis,
\be
\liftC_w C_s = \sum_{\big\{w' : {s \in R({w'}), \atop P(w') = P(w)}\big\}} \mu(w',w)\liftC_{w'} = \sum_{\big\{w' : {s \in R({w'}), \atop P(w') = P(w)}\big\}} \mu(w',w) \sum_{w''} \liftT_{w''w'} C_{w''}.
\ee
Then for any $w''$ such that $s \in R({w''})$ and $P(w'') \neq P(w)$, equating coefficients of $C_{w''}$ yields
\[
0 = \sum_{\big\{w' : {s \in R({w'}), \atop P(w') = P(w)}\big\}} \liftT_{w''w'} \mu(w',w) + [2]\liftT_{w''w} - \sum_{\{w' : s \not\in R({w'}) \}} \mu(w'',w')\liftT_{w'w} \equiv \mu(\liftT_{w''w}) - \mu(w'',w),
\]
where the equivalence is mod $\u\field_0$ and uses (iii). This proves (iv) for the upper canonical basis. The proof for the lower canonical basis is similar.
\end{proof}

\begin{figure}
{\tiny\[
\begin{array}{ccccccccccccccc}
    &\liftC_{1234} & \liftC_{1324} & \liftC_{2 1 3 4 } & \liftC_{1 2 4 3 } & \liftC_{1 4 2 3 } & \liftC_{1 3 4 2 } & \liftC_{2 3 1 4 } & \liftC_{3 1 2 4 } & \liftC_{2 1 4 3 } & \liftC_{2 4 1 3 } & \liftC_{4 1 2 3 } & \liftC_{2 3 4 1 } & \liftC_{3 1 4 2 } & \liftC_{3 4 1 2 } \\
C_{1234} &     1 & 0 & 0 & 0 & 0 & 0 & 0 & 0 & 0 & 0 & 0 & 0 & 0 & 0 \\
C_{1324} &    \frac{[2]^2}{[4]}& 1 & 0 & 0 & 0 & 0 & 0 & 0 & 0 & 0 & 0 & 0 & 0 & 0 \\
C_{2134} &    \frac{[3]}{[4]}& 0 & 1 & 0 & 0 & 0 & 0 & 0 & 0 & 0 & 0 & 0 & 0 & 0 \\
C_{1243} &    \frac{[3]}{[4]}& 0 & 0 & 1 & 0 & 0 & 0 & 0 & 0 & 0 & 0 & 0 & 0 & 0 \\
C_{1423} &    \frac{[2]}{[4]}& 0 & 0 & 0 & 1 & 0 & 0 & 0 & 0 & 0 & 0 & 0 & 0 & 0 \\
C_{1342} &    \frac{[2]}{[4]}& 0 & 0 & 0 & 0 & 1 & 0 & 0 & 0 & 0 & 0 & 0 & 0 & 0 \\
C_{2314} &    \frac{[2]}{[4]}& 0 & 0 & 0 & 0 & 0 & 1 & 0 & 0 & 0 & 0 & 0 & 0 & 0 \\
C_{3124} &    \frac{[2]}{[4]}& 0 & 0 & 0 & 0 & 0 & 0 & 1 & 0 & 0 & 0 & 0 & 0 & 0 \\
C_{2143} &    \frac{[2]^3}{[3][4]}& 0 &\frac{1}{[2]}&\frac{1}{[2]}& 0 & 0 & 0 & 0 & 1 & 0 &\frac{1}{[2]}&\frac{1}{[2]}& 0 & 0 \\
C_{2413} &    \frac{[2]^2}{[3][4]}& 0 & 0 & 0 &\frac{1}{[2]}& 0 &\frac{1}{[2]}& 0 & 0 & 1 & 0 & 0 & 0 & 0 \\
C_{4123} &    \frac{1}{[4]}& 0 & 0 & 0 & 0 & 0 & 0 & 0 & 0 & 0 & 1 & 0 & 0 & 0 \\
C_{2341} &    \frac{1}{[4]}& 0 & 0 & 0 & 0 & 0 & 0 & 0 & 0 & 0 & 0 & 1 & 0 & 0 \\
C_{3142} &    \frac{[2]^2}{[3][4]}& 0 & 0 & 0 & 0 &\frac{1}{[2]}& 0 &\frac{1}{[2]}& 0 & 0 & 0 & 0 & 1 & 0 \\
C_{3412} &    \frac{[2]}{[3][4]}&\frac{1}{[2]}& 0 & 0 & 0 & 0 & 0 & 0 & 0 & 0 & 0 & 0 & 0 & 1 \\
C_{1432} &    \frac{[2]^2}{[3][4]}&\frac{-1}{[2][4]}& 0 &-\frac{[2]}{[4]}&\frac{[3]}{[4]}&\frac{[3]}{[4]}& 0 & 0 & 0 & 0 & 0 & 0 & 0 &\frac{1}{[2]}\\
C_{3214} &    \frac{[2]^2}{[3][4]}&\frac{-1}{[2][4]}&-\frac{[2]}{[4]}& 0 & 0 & 0 &\frac{[3]}{[4]}&\frac{[3]}{[4]}& 0 & 0 & 0 & 0 & 0 &\frac{1}{[2]}\\
C_{2431} &    \frac{[2]}{[3][4]}& 0 & 0 &-\frac{1}{[4]}&\frac{[3]}{[2][4]}& 0 &\frac{-1}{[2][4]}& 0 & 0 &\frac{1}{[2]}& 0 &\frac{[3]}{[4]}& 0 & 0 \\
C_{4132} &    \frac{[2]}{[3][4]}& 0 & 0 &-\frac{1}{[4]}& 0 &\frac{[3]}{[2][4]}& 0 &\frac{-1}{[2][4]}& 0 & 0 &\frac{[3]}{[4]}& 0 &\frac{1}{[2]}& 0 \\
C_{4213} &    \frac{[2]}{[3][4]}& 0 &-\frac{1}{[4]}& 0 &\frac{-1}{[2][4]}& 0 &\frac{[3]}{[2][4]}& 0 & 0 &\frac{1}{[2]}&\frac{[3]}{[4]}& 0 & 0 & 0 \\
C_{3241} &    \frac{[2]}{[3][4]}& 0 &-\frac{1}{[4]}& 0 & 0 &\frac{-1}{[2][4]}& 0 &\frac{[3]}{[2][4]}& 0 & 0 & 0 &\frac{[3]}{[4]}&\frac{1}{[2]}& 0 \\
C_{4312} &    \frac{1}{[3][4]}&\frac{[3]}{[2][4]}& 0 & 0 &-\frac{1}{[4]}& 0 & 0 &-\frac{1}{[4]}& 0 & 0 &\frac{[2]}{[4]}& 0 & 0 &\frac{1}{[2]}\\
C_{3421} &    \frac{1}{[3][4]}&\frac{[3]}{[2][4]}& 0 & 0 & 0 &-\frac{1}{[4]}&-\frac{1}{[4]}& 0 & 0 & 0 & 0 &  \frac{[2]}{[4]}& 0 &\frac{1}{[2]}\\
C_{4231} &    \frac{1}{[3][4]}& 0 &\frac{-1}{[2][4]}&\frac{-1}{[2][4]}& 0 & 0 & 0 & 0 &\frac{1}{[2]}& 0 &\frac{[3]}{[2][4]}&\frac{[3]}{[2][4]}& 0 & 0 \\
C_{4321} &    \frac{1}{[2][3][4]}&\frac{1}{[4]}& 0 & 0 &\frac{-1}{[2][4]}&\frac{-1}{[2][4]}&\frac{-1}{[2][4]}&\frac{-1}{[2][4]}&\frac{1}{[3]}&\frac{-1}{[2][3]}&\frac{1}{[4]}&\frac{1}{[4]}&\frac{-1}{[2][3]}&  \frac{1}{[3]}\\
\end{array}\]}
\caption{Some of the projected upper canonical basis elements $\liftC_w$ in terms of the upper canonical basis $\{C_v\}_{v \in S_4}$.}
\label{f liftC example}
\end{figure}

\begin{remark}
It is sensible to ask whether Corollary \ref{c lift at u = 0 in H} and other results in this section hold for other finite Coxeter groups  $W$ in place of $\S_r$ (perhaps with a slight modification if right cells do not correspond to $\CC(\u) \tsr_\mathbf{A} \H(W)$-irreducibles).  We have not investigated this, but note that our proof will not extend easily as it depends on Schur-Weyl duality.
\end{remark}

\subsection{Seminormal bases}
\label{ss seminormal bases}
We wish to use the results about projected canonical bases to understand the transition matrix between the lower canonical basis of $M_\lambda$ and the upper canonical basis of $M_\lambda$. To do this, we relate both to seminormal bases in the sense of \cite{RamSeminormal}.  The transition matrices between the canonical bases of $M_\lambda$ and their corresponding seminormal bases also appear to be quite interesting---see the positivity conjectures in the next subsection.
\begin{definition}\label{d seminormal}
Given a chain of split semisimple $\field$-algebras $\field\cong H_1 \subseteq H_2 \subseteq \dots \subseteq H_r$ and an $H_r$-irreducible $N_\lambda$, a \emph{seminormal basis} of $N_\lambda$ is a $\field$-basis $B$ of $N_\lambda$ compatible with the restrictions in the following sense: there is a partition $B = B_{\mu^1} \sqcup \dots \sqcup B_{\mu^k}$ such that if $N_{\mu^i} = \field B_{\mu^i}$ then $N_\lambda = N_{\mu^1} \oplus \dots \oplus N_{\mu^k}$ as $H_{r-1}$-modules. Further, there is a partition of each $B_{\mu^i}$ that gives rise to a decomposition of $N_{\mu^i}$ into $H_{r-2}$-irreducibles, and so on, all the way down to $H_1$.
\end{definition}
Note that if the restriction of an $H_i$-irreducible to $H_{i-1}$ is multiplicity-free, then a seminormal basis is unique up to a diagonal transformation.

To construct seminormal bases corresponding to the upper and lower canonical basis of $M_\lambda$, first define, for any $J \subseteq S$, $(\liftC_Q)^J$ to be the projection of $C_Q$ onto the irreducible $\field \H_J$-module corresponding to the right cell of $\Res_{\field \H_J} \field\Gamma_{\lambda}$ containing $C_Q$. Define $(\liftCp_Q)^J$ similarly.  If $J = \{s_1, \dots, s_{r-2} \}$, then by \cite[\textsection 4]{B0},  $(\liftC_Q)^J$ (resp.  $(\liftCp_Q)^J$) is equal to $C_Q p_\mu$ (resp. $\C_Q p_\mu $), where $\mu = \sh(Q|_{[r-1]})$. Here, for a tableau  $Q$ and set $Z \subseteq \ZZ$, $Q|_Z$ denotes the subtableau of $Q$ obtained by removing the entries not in $Z$.

Define a total order $\ld$ on SYT$(\lambda)$ by declaring $Q' \ld Q$ if the numbers $k+1, \dots, r$ are in the same positions in $Q'$ and $Q$ and $\sh(Q'|_{[k-1]}) \gd \sh(Q|_{[k-1]})$; this $k$ is unique and we refer to it as $k(Q',Q)$.  This total order is the reverse of the last letter order defined in \cite{GM}.
\begin{lemma} \label{l lift transition matrix}
For $J = \{ s_1, \dots, s_{r-2} \}$, the transition matrix expressing the projected basis $\{(\liftC_Q)^J : Q \in SYT(\lambda) \}$ in terms of the upper canonical basis of $M_\lambda$ is lower-unitriangular, is the identity at  $\u = 0$ and  $\u = \infty$, and has $\br{\cdot}$-invariant entries  (i.e. $(\liftC_Q)^J = C_Q + \sum_{Q' \gdneq Q} m_{Q' Q} C_{Q'}$, $m_{Q' Q} \in \u\field_0,\ \br{m_{Q' Q}} = m_{Q' Q}$). The transition matrix expressing the projected basis $\{(\liftCp_Q)^J : Q \in SYT(\lambda) \}$ in terms of the lower canonical basis of $M_\lambda$ satisfies the same properties except is upper-unitriangular instead of lower-unitriangular (i.e. $(\liftCp_Q)^J = \C_Q + \sum_{Q' \ldneq Q} m'_{Q' Q} \C_{Q'}$, $m'_{Q' Q} \in \u\field_0,\ \br{m'_{Q' Q}} = m'_{Q' Q}$).
\end{lemma}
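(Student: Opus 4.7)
The plan is to reduce the claim to the projection analysis of Section~\ref{s projected canonical bases}, now applied to the subalgebra $\H_{r-1}$ acting on $M_\lambda$. The starting identification $(\liftC_Q)^J = C_Q p_\mu^{\H_{r-1}}$ and $(\liftCp_Q)^J = \C_Q p_\mu^{\H_{r-1}}$ for $\mu = \sh(Q|_{[r-1]})$ is recorded just before the lemma.

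First, I would use Proposition~\ref{p restrict Wgraph} to describe the $\H_{r-1}$-cellular structure of $\Res_{\H_{r-1}} M_\lambda$. The right $\H_{r-1}$-cells are $\Gamma_{\lambda,\mu} := \{C_Q : \sh(Q|_{[r-1]}) = \mu\}$ in the upper basis and $\Gamma'_{\lambda,\mu} := \{\C_Q : \sh(Q|_{[r-1]}) = \mu\}$ in the lower basis, indexed by partitions $\mu \subset \lambda$ of size $r-1$, and each cell is isomorphic as an $\H_{r-1}$-module with basis to its $\H_{r-1}$-counterpart $(M_\mu^{\H_{r-1}}, \Gamma_\mu^{\H_{r-1}})$ or $(M_\mu^{\H_{r-1}}, \Gamma'_\mu^{\H_{r-1}})$. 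By Theorem~\ref{t right cell partial order} the cell preorder on these pieces is constrained by dominance order on $\mu$, in the directions stated there for the upper and lower bases.

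Next, I would extend Corollary~\ref{c transition matrix projected to canbas} (and hence Corollary~\ref{c lift at u = 0 in H}) to this module-theoretic setting by applying the quantum Schur--Weyl machinery of Sections~\ref{s Quantum Schur-Weyl duality and canonical bases}--\ref{s projected canonical bases} to $\H_{r-1}$ acting on $V^{\tsr(r-1)}$, and transferring via the cellular embedding $M_\lambda \hookrightarrow \H_r \cong \bT^\varepsilon$ of Proposition~\ref{p weight space equals induced} (with $n = r$ and $\varepsilon = \epsilon_1 + \cdots + \epsilon_r$), under which the $\H_{r-1}$-action on $M_\lambda$ corresponds to the restricted action on the first $r-1$ tensor factors. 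Combined with the $\br{\cdot}$-invariance of the central idempotent $p_\mu^{\H_{r-1}}$, this yields
\begin{align*}
(\liftC_Q)^J &= C_Q + \sum_{\sh(Q'|_{[r-1]}) \ldneq \mu} m_{Q'Q}\, C_{Q'}, \\
(\liftCp_Q)^J &= \C_Q + \sum_{\sh(Q'|_{[r-1]}) \gdneq \mu} m'_{Q'Q}\, \C_{Q'},
\end{align*}
with coefficients $m_{Q'Q}, m'_{Q'Q} \in \u\field_0 \cap \ui\field_\infty$ that are $\br{\cdot}$-invariant.

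The final step is to translate the dominance constraint on $\sh(Q'|_{[r-1]})$ into the total order $\ld$ on SYT$(\lambda)$. When $Q' \ne Q$ and $\sh(Q'|_{[r-1]}) \ne \sh(Q|_{[r-1]})$, the box containing $r$ differs in $Q'$ and $Q$, forcing $k(Q',Q) = r$ in the definition of $\ld$: the condition on positions of $\{k+1,\dots,r\}$ is then vacuous and the comparison reduces to the shapes at $k-1 = r-1$. Under this identification, $\sh(Q'|_{[r-1]}) \ldneq \mu$ is precisely $Q' \gdneq Q$ (yielding lower-unitriangularity of the upper case), and $\sh(Q'|_{[r-1]}) \gdneq \mu$ is precisely $Q' \ldneq Q$ (yielding upper-unitriangularity of the lower case). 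The main obstacle is the second step---carefully adapting the projected canonical basis framework of Section~\ref{s projected canonical bases}, which is developed for $\H_r$ on $V^{\tsr r}$, to the restricted action of $\H_{r-1}$ on $M_\lambda$. Propositions~\ref{p cell isomorphism induced} and~\ref{p restrict Wgraph} provide the cellular bridge, but one must verify that the balancedness of the relevant upper and lower crystal lattices at $\u = 0$ and $\u = \infty$ passes correctly through this cellular embedding.
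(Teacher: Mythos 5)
Your plan matches the paper's proof in its main ingredients --- identify the projected elements as $C_Q p_\mu^{\H_{r-1}}$, invoke Proposition~\ref{p restrict Wgraph} and Theorem~\ref{t right cell partial order} for the cellular structure and triangularity, and pull back the properties of $\liftT, \liftT'$ --- and your translation from the dominance constraint on $\sh(Q'|_{[r-1]})$ to the total order $\ld$ (via $k(Q',Q)=r$) is a correct and useful elaboration that the paper leaves implicit.

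The place where you stop short is exactly the step you flag as the ``main obstacle,'' and the paper resolves it more directly than your sketch suggests. You do not actually need to re-run the Schur--Weyl machinery on $V^{\tsr(r-1)}$ and transfer it through the embedding $M_\lambda \hookrightarrow \bT^\varepsilon$; the relevant statement about $\H_{r-1}$ is already Corollary~\ref{c lift at u = 0 in H} with $r-1$ in place of $r$, and all that is needed is to realize $\Res_{\H_J} M_\lambda$ as a cellular subquotient of $\H_{r-1}$ so that corollary applies verbatim. The paper does this by first identifying $M_\lambda^\mathbf{A}$ with the specific right cell $\mathbf{A}\Gamma_{Z_\lambda^*}$, and then making the key combinatorial observation that every uninsertion from $Z_\lambda^*$ kicks out the entry $\lambda_1$; by Proposition~\ref{p restrict Wgraph} this places all of $\Res_{\H_J}\mathbf{A}\Gamma_{Z_\lambda^*}$ inside a \emph{single} piece $E_x \cong \H_{r-1}$ of the cellular decomposition of $\Res_{\H_J}\H_r$ (with $x = s_{\lambda_1}\cdots s_{r-1}$). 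Without that single-$E_x$ observation, you only know each individual cell $\Gamma_{\lambda,\mu}$ of $\Res_{\H_{r-1}} M_\lambda$ is isomorphic to a cell of $\H_{r-1}$, but you cannot yet compare the projections across different $\mu$'s to a single transition matrix $\liftT$ for $\H_{r-1}$. Once you add this observation, the ``obstacle'' disappears and the rest of your argument goes through; your subsequent dominance-to-$\ld$ translation and the $\br{\cdot}$-invariance of $p_\mu^{\H_{r-1}}$ then deliver all the asserted properties.
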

\begin{proof}
This follows from Corollary \ref{c lift at u = 0 in H} and Proposition \ref{p restrict Wgraph}: identify $M_\lambda^\mathbf{A}$ and its upper canonical basis with $\mathbf{A} \Gamma_{Z_\lambda^*}$, where $Z_\lambda^*$ is the standard tableau with $1, 2, \dots, \lambda_1$ in the first row, $\lambda_1 + 1, \dots, \lambda_1 + \lambda_2$ in the second row, etc. Then $\Res_{\H_J} \mathbf{A}\Gamma_{Z_\lambda^*}$ has right cells labeled by the result of uninserting an outer corner of $Z_\lambda^*$ (see \cite[\textsection 4]{B0}). The key point is that all uninsertions of $Z_\lambda^*$ result in the entry $\lambda_1$ being kicked out, and therefore by Proposition \ref{p restrict Wgraph}, the $\H_J$-module with basis $\Res_{\H_J} \mathbf{A}\Gamma_{Z_\lambda^*} \subseteq \mathbf{A} \{ \C_{xv} : v \in (\S_r)_J \}$ is isomorphic to a cellular subquotient of $\Gamma_{\S_{r-1}}$ (here  $x = s_{\lambda_1} s_{\lambda_1 +1} \cdots s_{r-1}$). We can then apply Corollary \ref{c lift at u = 0 in H} with $r$ of the proposition set to $r-1$. Lower-unitriangularity follows from Corollary  \ref{t right cell partial order}.  The proof for the lower canonical basis is similar.
\end{proof}

Set $J_i = \{s_1, \dots, s_{i-1}\}$.  We now define the upper seminormal basis to be $\{\gt_Q : Q \in \text{SYT}(\lambda)\}$, where $\gt_Q$ is the result of applying  the construction $C_Q \tto (\liftC_Q)^J$ first with $J = J_{r-1}$, then with $J =J_{r-2}$, and so on, finishing with $J = J_1 = \emptyset$.  The lower seminormal basis $\{\gtp_Q : Q \in \text{SYT}(\lambda)\}$ is defined similarly.  These bases are seminormal with respect to the chain $\H_{J_1} \subseteq \cdots \subseteq \H_{J_{r-1}} \subseteq \H_r$.
\begin{proposition} \label{p gt to canonical transition}
The transition matrix $T(\lambda) = (T_{Q' Q})_{Q', Q \in \text{SYT}(\lambda)}$ (resp. $T'(\lambda) = (T'_{Q' Q})_{Q', Q \in \text{SYT}(\lambda)}$) expressing the upper (resp. lower) seminormal basis of $ M_\lambda$ in terms of the upper (resp. lower) canonical basis of $M_\lambda$ and  $T(\lambda)^{-1}$ (resp.  $T'(\lambda)^{-1}$)
\begin{list}{\emph{(\roman{ctr})}} {\usecounter{ctr} \setlength{\itemsep}{1pt} \setlength{\topsep}{2pt}}
\item are lower-unitriangular: $\gt_Q = C_Q + \sum_{Q' \gdneq Q} T_{Q' Q} C_{Q'}$ and similarly for $T(\lambda)^{-1}$

\noindent(resp. upper-unitriangular: $\gtp_Q = \C_Q + \sum_{Q' \ldneq Q} T'_{Q' Q} \C_{Q'}$ and similarly for $T'(\lambda)^{-1}$),
\item have entries that are $\br{\cdot}$-invariant and belong to $\field_0 \cap \field_\infty$,
\item are the identity at  $\u = 0$ and  $\u = \infty$,
\item satisfy: $\mu(T_{Q'Q}) = \mu(Q',Q)$ and $\mu(T^{-1}_{Q'Q}) = -\mu(Q',Q)$ for $Q',Q$ such that $Q' \gdneq Q$ and $(R(C_{Q'}) \setminus R(C_{Q})) \cap  J_{k(Q',Q)-1} \neq \emptyset$

\noindent(resp. $\mu(T'_{Q'Q}) = -\mu(Q',Q)$ and $\mu(T'^{-1}_{Q'Q}) = \mu(Q',Q)$ for $Q',Q$ such that $Q' \ldneq Q$ and $(R(\C_{Q'}) \setminus R(\C_{Q})) \cap  J_{k(Q',Q)-1} \neq \emptyset$).
\end{list}
\end{proposition}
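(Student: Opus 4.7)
The plan is to prove the proposition by iteratively applying Lemma \ref{l lift transition matrix} at each level of the tower $\H_{J_1} \subseteq \H_{J_2} \subseteq \cdots \subseteq \H_{J_{r-1}} \subseteq \H_r$. Define $\phi_k : M_\lambda \to M_\lambda$ to be the $\field$-linear map specified on basis elements by $\phi_k(C_Q) := (\liftC_Q)^{J_k}$; then by the construction of the seminormal basis, $\gt_Q = \phi_1(\phi_2(\cdots \phi_{r-1}(C_Q) \cdots))$, and so $T(\lambda)$ factors as $T(\lambda) = M_1 M_2 \cdots M_{r-1}$, where $M_k$ denotes the matrix of $\phi_k$ in the basis $\{C_Q : Q \in \text{SYT}(\lambda)\}$. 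At each level $k$, Proposition \ref{p restrict Wgraph} applied iteratively along the tower identifies the relevant cellular subquotient of $M_\lambda$ with an $\H_{J_k}$-Specht module carrying its upper canonical basis, so the argument of Lemma \ref{l lift transition matrix} applies to each $M_k$ to yield that its entries lie in $\field_0 \cap \field_\infty$, are $\br{\cdot}$-invariant, and specialize to the identity at $\u = 0, \infty$, with off-diagonal support constrained by strict dominance of the level-$k$ shapes $\sh(Q|_{[k]})$. The parallel construction with $\C_Q$ and $(\liftCp_Q)^{J_k}$ yields the analogous factorization for $T'(\lambda)$.

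Parts (i)--(iii) then follow by closure under matrix multiplication: bar-invariance, membership in $\field_0 \cap \field_\infty$, and identity specialization at $\u = 0$ and $\u = \infty$ are all preserved, so $T(\lambda)$ inherits them. For lower-unitriangularity in $\ld$: a nonzero off-diagonal contribution to $(T(\lambda))_{Q'Q}$ comes from some composable chain $Q' = Q_0 \to Q_1 \to \cdots \to Q_{r-1} = Q$ through the factors, and combining the shape-dominance constraints (using that $\ld$ is essentially the lexicographic order on the sequence $(\sh(\cdot|_{[1]}), \ldots, \sh(\cdot|_{[r-1]}))$) forces $Q' \gdneq Q$ in $\ld$. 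The same analysis applied to $T(\lambda)^{-1} = M_{r-1}^{-1} \cdots M_1^{-1}$ (each $M_k^{-1}$ sharing the same ring-theoretic and triangularity properties as $M_k$) gives (i)--(iii) for $T(\lambda)^{-1}$; the proofs for $T'(\lambda)$ and its inverse are parallel.

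For (iv), fix $Q' \gdneq Q$ in $\ld$ and set $k = k(Q',Q)$. The defining property of $k$ yields $\sh(Q'|_{[j]}) = \sh(Q|_{[j]})$ for all $j \geq k$ and $\sh(Q'|_{[k-1]}) \gdneq \sh(Q|_{[k-1]})$. The first condition forces the diagonal entries of $M_k, M_{k+1}, \ldots, M_{r-1}$ at the relevant tableaux to be exactly $1$, since no shape change at these levels between $Q$ and $Q'$ is possible. The leading $\u^1$-coefficient of $(T(\lambda))_{Q'Q}$ therefore receives contributions only from chains with exactly one off-diagonal transition, and the shape constraints confine this transition to the factor $M_{k-1}$. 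Thus $\mu(T_{Q'Q}) = \mu((M_{k-1})_{Q'Q})$, and applying Corollary \ref{c lift at u = 0 in H}(iv) at the level-$(k-1)$ Specht-module subquotient, under the right-descent hypothesis $(R(C_{Q'}) \setminus R(C_Q)) \cap J_{k-1} \neq \emptyset$, yields $\mu((M_{k-1})_{Q'Q}) = \mu(Q',Q)$. The leading-coefficient statements for $T(\lambda)^{-1}, T'(\lambda), T'(\lambda)^{-1}$ follow by analogous arguments, with the sign of $\mu$ flipping for the lower basis exactly as in Corollary \ref{c lift at u = 0 in H}(iv).

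The main obstacle is (iv). Two delicate points arise: (a) we must show that only the single factor $M_{k-1}$ contributes to the $\u^1$-coefficient of $(T(\lambda))_{Q'Q}$, which requires careful control of how products of off-diagonal entries in $\u\field_0$ could in principle combine to produce $\u^1$-terms across distinct factors; and (b) we must reconcile the local Kazhdan--Lusztig $\mu$-value at level $k-1$ with the global $\mu(Q', Q)$ value in the Kazhdan--Lusztig graph on $\text{SYT}(\lambda)$. Point (b) relies on the fact that $\mu(Q',Q)$ is determined by the structure of the right-descent-set difference $R(C_{Q'}) \setminus R(C_Q)$, and the hypothesis that this intersects $J_{k-1}$ is precisely what guarantees the edge is already visible in the level-$(k-1)$ subquotient.
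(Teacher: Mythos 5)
The proposal's central factorization is wrong, and the error is not cosmetic: the linear maps $\phi_k$ defined by $\phi_k(C_Q) := (\liftC_Q)^{J_k}$ on the upper canonical basis do \emph{not} compose to give $\gt_Q$. The problem is that $\phi_k$, applied by linearity to $\sum_{Q''} a_{Q''} C_{Q''}$, projects each $C_{Q''}$ onto its \emph{own} $\H_{J_k}$-cell, whereas the seminormal construction must project the whole element onto the single nested cell determined by $Q$ (i.e.\ the projection at step $k$ must be performed \emph{inside} the $\H_{J_{k+1}}$-irreducible produced at the previous step). You can see the discrepancy already in $M_{(3,1)}$: with $Q_4 = \myvcenter{\scriptsize$\tableau{1&2&3\\4}$}$, $Q_3 = \myvcenter{\scriptsize$\tableau{1&2&4\\3}$}$, $Q_2 = \myvcenter{\scriptsize$\tableau{1&3&4\\2}$}$ one computes $\phi_3(C_{Q_4}) = C_{Q_4} + \frac{[2]}{[3]}C_{Q_3} + \frac{1}{[3]}C_{Q_2}$ (the $(3)$-isotypic projection for $\H_{J_3}$), and $\phi_2(C_{Q_3}) = C_{Q_3} + \frac{1}{[2]}C_{Q_2}$, $\phi_2(C_{Q_4})=C_{Q_4}$, $\phi_2(C_{Q_2})=C_{Q_2}$; so your $\phi_1(\phi_2(\phi_3(C_{Q_4}))) = C_{Q_4} + \frac{[2]}{[3]}C_{Q_3} + \frac{2}{[3]}C_{Q_2}$, whereas the actual seminormal element is $\gt_{Q_4} = C_{Q_4} + \frac{[2]}{[3]}C_{Q_3} + \frac{1}{[3]}C_{Q_2}$ (the step with $J_2$ must be the identity here because $\gt_{Q_4}^{(1)}$ already spans a one-dimensional $\H_{J_3}$-irreducible). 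Consequently $T(\lambda) \neq M_1 M_2 \cdots M_{r-1}$ in your setup.

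The paper's argument avoids this by taking $\tilde{M}^{J_k}$ to be the matrix of the step-$k$ projection expressed in the \emph{intermediate} basis obtained after the steps $J_{r-1},\dots,J_{k+1}$, so that $T(\lambda) = \tilde{M}^{J_{r-1}}\tilde{M}^{J_{r-2}}\cdots \tilde{M}^{J_1}$, and it is precisely this intermediate-basis description that makes each $\tilde{M}^{J_k}$ genuinely block diagonal (blocks indexed by the positions of $k+2,\dots,r$) with blocks of the form in Lemma~\ref{l lift transition matrix}. That block structure is exactly what you need (and implicitly use) in part (iv) to conclude that only the single factor at level $k(Q',Q)-1$ has a nonzero $(Q',Q)$ entry; without it, the claim does not follow, and moreover you only argue for the factors at levels $\geq k$, leaving the levels $< k-1$ unaddressed (these vanish because $Q'$ and $Q$ fall in different blocks there). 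So the high-level strategy is the right one --- factor $T(\lambda)$ into the level-by-level projection matrices and apply Lemma~\ref{l lift transition matrix} together with the additivity of leading coefficients --- but the factors themselves must be set up relative to the nested cellular subquotients, not as linear extensions of the direct projections of the standard basis.

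Two smaller issues: your sentence about forcing the diagonal entries to be $1$ should instead be the assertion that the $(Q',Q)$ \emph{off}-diagonal entries vanish at those levels; and the direction of the dominance inequality is reversed ($Q' \gdneq Q$ forces $\sh(Q'|_{[k-1]}) \ldneq \sh(Q|_{[k-1]})$, not $\gdneq$). The ``delicate point (a)'' you flag is in fact benign once (i)--(iii) are in hand: off-diagonal entries lie in $\u\field_0 \cap \ui\field_\infty$, so a product of two or more off-diagonal entries contributes only in $\u^2\field_0$ and hence not to the $\u^1$-coefficient; this is exactly the paper's observation~\eqref{en matrix product mu}.
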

\begin{proof}
The transition matrix $T(\lambda)$ is the product $\tilde{M}^{J_{r-1}} \tilde{M}^{J_{r-2}} \cdots \tilde{M}^{J_1}$, where $\tilde{M}^{J_i}$ is a block diagonal matrix, with each block of the form described in Lemma \ref{l lift transition matrix} with $J$ of the lemma equal to $J_i$. Properties (i)-(iii) of $T(\lambda)$ then follow because they are preserved under matrix multiplication and diagonally joining blocks.

To prove (iv), we apply the following easy claim
\refstepcounter{equation}
\begin{enumerate}[label={(\theequation)}]
\label{en matrix product mu}
\item if $M^1, \dots, M^l$ are matrices satisfying (iii) and $M = \prod_{k=1}^l M^k$, then $\mu(M_{ij}) = \sum_{k} \mu(M^k_{ij})$ for $i \neq j$.
\end{enumerate}
to obtain $\mu(T_{Q'Q}) = \sum_{k=1}^{r-1} \mu(\tilde{M}^{J_k}_{Q'Q})$.  If $Q' \gdneq Q$, then there is exactly one $k$ for which $\tilde{M}^{J_{k-1}}_{Q'Q}$ is non-zero; this $k$ is exactly  $k(Q',Q)$. Further, by Corollary \ref{c lift at u = 0 in H} (iv) and the proof of Lemma \ref{l lift transition matrix}, $\mu(\tilde{M}^{J_{k(Q',Q)-1}}_{Q'Q}) = \mu(Q',Q)$ if $(R(C_{Q'}) \setminus R(C_{Q})) \cap  J_{k(Q',Q)-1} \neq \emptyset$. This proves (iv) for $T(\lambda)$. The statements for $T'(\lambda)$ are proved similarly and the statements for $T(\lambda)^{-1}$ and $T'(\lambda)^{-1}$ follow easily.
\end{proof}

\begin{example}
Continuing Example \ref{ex lambda31}, we give transition matrices between the various bases defined above. The convention is that the columns of the matrix express the basis element at the top of the column in terms of the row labels. The matrices $D(\lambda)$ and $S(\lambda)$ are defined in Theorem \ref{t transition C' to C} and its proof (below).
\setlength{\cellsize}{8pt}
\[{\tiny
\begin{array}{cccc}
 \vspace{1pt}
 & \gt_{Q_4} & \gt_{Q_3} & \gt_{Q_2} \\
  \vspace{1pt}
C_{Q_4} & 1 & 0 & 0 \\
 \vspace{1pt}
C_{Q_3} & \frac{[2]}{[3]} & 1 & 0 \\
 \vspace{1pt}
C_{Q_2} & \frac{1}{[3]} & \frac{1}{[2]} & 1 \\
&\multicolumn{3}{c}{T((3,1))}
\end{array}} \quad
{\tiny
\begin{array}{cccc}
 \vspace{1pt}
 & \gtp_{Q_4} & \gtp_{Q_3} & \gtp_{Q_2} \\
  \vspace{1pt}
\gt_{Q_4} & [3] & 0 & 0 \\
 \vspace{1pt}
\gt_{Q_3} & 0 & \frac{[2][4]}{[3]} & 0 \\
 \vspace{1pt}
\gt_{Q_2} & 0 & 0 & \frac{[4]}{[2]}\\
& \multicolumn{3}{c}{D((3,1))}
\end{array}}\quad
{\tiny
\begin{array}{cccc}
 \vspace{1pt}
 & \C_{Q_4} & \C_{Q_3} & \C_{Q_2} \\
  \vspace{1pt}
\gtp_{Q_4} & 1 & \frac{[2]}{[3]} & \frac{1}{[3]} \\
 \vspace{1pt}
\gtp_{Q_3} & 0 & 1 & \frac{1}{[2]} \\
 \vspace{1pt}
\gtp_{Q_2} & 0 & 0 & 1 \\
& \multicolumn{3}{c}{T'((3,1))^{-1}}
\end{array}}
\]
\[{\tiny
\begin{array}{cccc}
 \vspace{2pt}
 & \C_{Q_4} & \C_{Q_3} & \C_{Q_2} \\
  \vspace{2pt}
C_{Q_4} & [3] & [2] & 1 \\
 \vspace{2pt}
C_{Q_3} & [2] & [2]^2 & [2] \\
 \vspace{2pt}
C_{Q_2} & 1 & [2] & [3]\\
& \multicolumn{3}{c}{S((3,1))}
\end{array}}
\]
The more substantial example $\lambda = (4,2)$ is below, where $S(\lambda)$ is scaled as in Conjecture \ref{cj non-negativity T T' D}, so that its entries lie in $\mathbf{A}$, are $\br{\cdot}$-invariant,  and have greatest common divisor 1.
\[{\tiny
\setlength{\cellsize}{8pt}
\begin{array}{cccccccccc}
 \vspace{2pt}
 & \tableau{1 & 2 & 3 & 4\\ 5 & 6} & \tableau{1 & 2 & 3 & 5\\ 4 & 6} & \tableau{1 & 2 & 4 & 5\\ 3 & 6} & \tableau{1 & 3 & 4 & 5\\ 2 & 6} & \tableau{1 & 2 & 3 & 6\\ 4 & 5} & \tableau{1 & 2 & 4 & 6\\ 3 & 5} & \tableau{1 & 3 & 4 & 6\\ 2 & 5} & \tableau{1 & 2 & 5 & 6\\ 3 & 4} & \tableau{1 & 3 & 5 & 6\\ 2 & 4} \\
 \vspace{2pt}
\tableau{1 & 2 & 3 & 4\\ 5 & 6} & 1 & \frac{[3]}{[4]} & \frac{[2]}{[4]} & \frac{[1]}{[4]} & \frac{[2]}{[4]} & \frac{[2]^2}{[4]}  & \frac{[2]}{[4]} &  \frac{[2]}{[4][3]} & \frac{[2]^2}{[3][4]} \\
 \vspace{2pt}
\tableau{1 & 2 & 3 & 5\\ 4 & 6} & 0 & 1 & \frac{[2]}{[3]} & \frac{1}{[3]} & \frac{[2]}{[3]} & \frac{[2]^2}{[3]^2} & \frac{[2]}{[3]^2} & \frac{[2]}{[3]^2} & \frac{[2]^2}{[3]^2} \\
 \vspace{2pt}
\tableau{1 & 2 & 4 & 5\\ 3 & 6} & 0 & 0 & 1 & \frac{1}{[2]} & 0 & \frac{[2]}{[3]} & \frac{1}{[3]} & \frac{1}{[3]} & \frac{1}{[2][3]} \\
 \vspace{2pt}
\tableau{1 & 3 & 4 & 5\\ 2 & 6} & 0 & 0 & 0 & 1 & 0 & 0 & \frac{[2]}{[3]} & 0 & \frac{1}{[3]} \\
 \vspace{2pt}
\tableau{1 & 2 & 3 & 6\\ 4 & 5} & 0&    0&    0&    0&    1&\frac{[2]}{[3]}& \frac{1}{[3]}& \frac{1}{[3]}&\frac{[2]}{[3]} \\
 \vspace{2pt}
\tableau{1 & 2 & 4 & 6\\ 3 & 5} & 0&    0&    0&    0&    0&    1& \frac{1}{[2]}& \frac{1}{[2]}  &  \frac{1}{[2]^2} \\
 \vspace{2pt}
\tableau{1 & 3 & 4 & 6\\ 2 & 5} & 0&    0&    0&    0&    0&    0&    1&    0& \frac{1}{[2]} \\
 \vspace{2pt}
\tableau{1 & 2 & 5 & 6\\ 3 & 4} & 0&    0&    0&    0&    0&    0&    0&    1& \frac{1}{[2]} \\
 \vspace{2pt}
\tableau{1 & 3 & 5 & 6\\ 2 & 4} & 0&    0&    0&    0&    0&    0&    0&    0&    1\\
& \multicolumn{9}{c}{T'((4,2))^{-1}}
\end{array}}\]
\vspace{3pt}
\[{\tiny
\begin{array}{cccccccccc}
 \vspace{2pt}
 & \tableau{1 & 2 & 3 & 4\\ 5 & 6} & \tableau{1 & 2 & 3 & 5\\ 4 & 6} & \tableau{1 & 2 & 4 & 5\\ 3 & 6} & \tableau{1 & 3 & 4 & 5\\ 2 & 6} & \tableau{1 & 2 & 3 & 6\\ 4 & 5} & \tableau{1 & 2 & 4 & 6\\ 3 & 5} & \tableau{1 & 3 & 4 & 6\\ 2 & 5} & \tableau{1 & 2 & 5 & 6\\ 3 & 4} & \tableau{1 & 3 & 5 & 6\\ 2 & 4} \\
  \vspace{2pt}
\tableau{1 & 2 & 3 & 4\\ 5 & 6} & [3][4] & [3]^2 & [2][3] & [3] & [2][3] & [2]^2[3] & [2][3] & [2] & [2]^2 \\
 \vspace{2pt}
\tableau{1 & 2 & 3 & 5\\ 4 & 6} & [3]^2 & [2][3]^2 & [2]^2[3] & [2][3] & [2]^2[3] & 2[4] + 3[2] & 2[3] + 1 & [2]^2 & [2]^3 \\
 \vspace{2pt}
\tableau{1 & 2 & 4 & 5\\ 3 & 6} & [2][3] & [2]^2[3] & [2][3]^2 & [3]^2 & [2]^3 & [2]^4 & [2]^3 & [2][3] & 2[3] + 1 \\
 \vspace{2pt}
\tableau{1 & 3 & 4 & 5\\ 2 & 6} & [3] & [2][3] & [3]^2 & [3][4] & [2]^2 & [2]^3 & [3]^2 & [3] & [2][3] \\
 \vspace{2pt}
\tableau{1 & 2 & 3 & 6\\ 4 & 5} & [2][3] & [2]^2[3] & [2]^3 & [2]^2 & [2][3]^2 & [2]^4 & [2]^3 & [2][3] & [2]^2[3] \\
 \vspace{2pt}
\tableau{1 & 2 & 4 & 6\\ 3 & 5} & [2]^2[3] & 2[4] + 3[2] & [2]^4 & [2]^3 & [2]^4 & [2]^5 & [2]^4 & [2]^2[3] & 2[4] + 3[2] \\
 \vspace{2pt}
\tableau{1 & 3 & 4 & 6\\ 2 & 5} & [2][3] & 2[3] + 1 & [2]^3 & [3]^2 & [2]^3 & [2]^4 & [2][3]^2 & [2][3] & [2]^2[3] \\
 \vspace{2pt}
\tableau{1 & 2 & 5 & 6\\ 3 & 4} & [2] & [2]^2 & [2][3] & [3] & [2][3] & [2]^2[3] & [2][3] & [3][4] & [3]^2 \\
 \vspace{2pt}
\tableau{1 & 3 & 5 & 6\\ 2 & 4} & [2]^2 & [2]^3 & 2[3] + 1 & [2][3] & [2]^2[3] & 2[4] + 3[2] & [2]^2[3] & [3]^2 & [2][3]^2\\
& \multicolumn{9}{c}{S((4,2))}
\end{array}}\]
\end{example}

For the next proposition, let us clarify a confusing point. Let $\Gamma'_T$, $T \in \text{SSYT}_{[n]}(\lambda)$, be the $\H_r$-cell of $\bT$ from Corollary \ref{c Schur-Weyl duality lower} (i); let $\Gamma'_P$ be the right cell of $\Gamma'_{\S_r}$ from \textsection \ref{ss cell label conventions C_Q C'_Q}, where $P \in \text{SYT}(\lambda)$ is the \emph{standardization} of $T$, i.e. $P = \transpose{P(D(\mathbf{k}))}$ for any $\mathbf{k}$ with $P(\mathbf{k}^\dagger) = T$. The $\H_r$-cells $\Gamma'_P, \Gamma'_T$, and $\Gamma'_\lambda$ give rise to isomorphic $\H_r$-modules with basis, the isomorphisms being given by
\be
\label{e sb involution}
\begin{array}{cclccclcccl}
\Gamma'_T &\cong& \Gamma'_P, && \Gamma'_P &\cong& \Gamma'_\lambda, && \Gamma'_T &\cong& \Gamma'_\lambda,\\
\cp_\mathbf{k} &\longleftrightarrow& \C_{D(\mathbf{k})} && \C_w &\longleftrightarrow& \C_{\transpose{Q(w)}} && \cp_\mathbf{k} &\longleftrightarrow& \C_{Q(\mathbf{k}^\dagger)^\dagger}
\end{array}
\ee
where $Q^\dagger$, for $Q$ a SYT, denotes the Sch\"utzenberger involution of $Q$ (see, e.g., \cite[A1.2]{F}). The left-hand isomorphism is from Theorem \ref{t Schur-Weyl duality lower} (iv), the middle from \textsection \ref{ss cell label conventions C_Q C'_Q}, and the right is the composition of the two.

Let $1^\text{op}$ be the antiautomorphism of  $\H_r$ determined by $T_i^{1^\text{op}} = T_i$.
\begin{proposition} \label{p dual bases Mlambda}
There is a bilinear form  $\langle \cdot,\cdot \rangle : M_\lambda \times M_\lambda \to \field$ satisfying
\begin{list}{\emph{(\roman{ctr})}} {\usecounter{ctr} \setlength{\itemsep}{1pt} \setlength{\topsep}{2pt}}
\item  $\langle x h,x' \rangle = \langle x,x' h^{1^\text{op}} \rangle $ for any  $h \in \H_r$,  $x, x' \in M_\lambda$,
\item $\langle  C_Q, \C_{Q'}  \rangle  = \delta_{QQ'}$,
\item $\langle  (\liftC_Q)^J, (\liftCp_{Q'})^J  \rangle  = \delta_{QQ'}$,
\item $\langle  \gt_Q, \gtp_{Q'}  \rangle  = \delta_{QQ'}$.
\end{list}
\end{proposition}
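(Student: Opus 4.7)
The plan is to define $\langle \cdot, \cdot \rangle$ on $M_\lambda$ by $\langle C_Q, \C_{Q'} \rangle = \delta_{Q Q'}$ and extending $\field$-bilinearly, so that (ii) holds by construction. The main work is to verify property (i), i.e., $\langle x h, x' \rangle = \langle x, x' h^{1^{\text{op}}} \rangle$. Since $\C_s^{1^{\text{op}}} = \C_s$ and $\{\C_s : s \in S\}$ generates $\H_r$ as an $\mathbf{A}$-algebra, it suffices to verify $\langle C_Q \,\C_s, \C_{Q'} \rangle = \langle C_Q, \C_{Q'}\, \C_s \rangle$ for every $s$. By \eqref{e prime C on prime canbas} and the discussion in \textsection\ref{ss cell label conventions C_Q C'_Q}, the right action of $\C_s$ on the lower canonical basis $\{\C_Q\}$ of $M_\lambda$ is given by some matrix $A = A(s)$, and by \cite[Corollary 3.2]{KL} its action on the upper canonical basis $\{C_Q\}$ is given by the transpose $A^T$. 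Using this transpose relation together with (ii), both pairings collapse to $A_{Q Q'}$, completing the verification of (i). Since $M_\lambda$ is $\field \H_r$-irreducible, Schur's lemma implies the form satisfying (i) is unique up to a global scalar, and (ii) pins this scalar down completely.

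For (iii), write $\mu = \sh(Q|_{[r-1]})$ and $\mu' = \sh(Q'|_{[r-1]})$, so that $(\liftC_Q)^J = C_Q\,p^J_\mu$ and $(\liftCp_{Q'})^J = \C_{Q'}\,p^J_{\mu'}$, where $p^J_\nu \in \H_J$ denotes the minimal central idempotent corresponding to $M_\nu$. The crucial input is $(p^J_\nu)^{1^{\text{op}}} = p^J_\nu$ for every $\nu$, which I would establish by induction on $r$ (base case $r=1$ trivial): once a nondegenerate $1^{\text{op}}$-invariant form on each $M_\nu$ with $\nu \vdash r-1$ is in hand, every central $z \in \H_J$ and its image $z^{1^{\text{op}}}$ act as the same scalar on every $M_\nu$, forcing each minimal central idempotent to be $1^{\text{op}}$-fixed. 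Substituting into (i) yields
\[
\langle (\liftC_Q)^J, (\liftCp_{Q'})^J \rangle \;=\; \langle C_Q, \C_{Q'}\, p^J_{\mu'}\, p^J_\mu \rangle,
\]
which vanishes when $\mu \neq \mu'$ (and simultaneously $Q \neq Q'$, so both sides agree). When $\mu = \mu'$, the right-hand side equals $\langle C_Q, (\liftCp_{Q'})^J \rangle$; by the proof of Lemma \ref{l lift transition matrix}, the correction terms $m'_{Q'' Q'}\C_{Q''}$ that distinguish $(\liftCp_{Q'})^J$ from $\C_{Q'}$ only involve $Q''$ with $\sh(Q''|_{[r-1]}) \gdneq \mu$, hence $Q'' \neq Q$; thus this pairing reduces to $\langle C_Q, \C_{Q'} \rangle = \delta_{Q Q'}$.

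Property (iv) follows by iterating the argument of (iii) down the chain $J_{r-1} \supseteq J_{r-2} \supseteq \cdots \supseteq J_1$: both seminormal bases are produced from the canonical bases by successively projecting through $\br{\cdot}$-invariant minimal central idempotents of the parabolic subalgebras $\H_{J_i}$, and each such idempotent is $1^{\text{op}}$-fixed for the same reason as above. The main obstacle is precisely the $1^{\text{op}}$-invariance of these minimal central idempotents; once that is in place the remaining verification is a direct computation using bilinearity and (i), with no new conceptual content beyond the calculation already carried out for (iii).
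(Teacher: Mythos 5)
Your proof is correct, but it takes a genuinely different route from the paper's. The paper obtains the form $\langle\cdot,\cdot\rangle$ on $M_\lambda$ by restricting the bilinear form $(\cdot,\cdot)$ on $\bT$ from Proposition-Definition \ref{p dual bases} (whose invariance properties were imported from \cite{Brundan}) to $\pi^\bT_\lambda(\field\Gamma_T)\times\pi^\bT_\lambda(\field\Gamma'_T)$, and then composes with the isomorphism $M_\lambda\cong M_\lambda^\dagger$, $\C_Q\mapsto\C_{Q^\dagger}$, to fix the indexing; (iii) and (iv) are then deduced exactly as you do, by iterating the argument of Proposition \ref{p dual lifted bases}. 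You instead define the form outright by (ii) and verify (i) directly using the transpose relation between the matrix of right multiplication by $\C_s$ in the two canonical bases of $M_\lambda$, together with the fact that $\{\C_s\}_{s\in S}$ generates $\H_r$ and $\C_s^{1^{\text{op}}}=\C_s$. Your inductive argument that the minimal central idempotents $p^J_\nu$ are $1^{\text{op}}$-fixed is sound (central characters pair against a nondegenerate invariant form on each $M_\nu$, forcing $c_\nu(z^{1^{\text{op}}})=c_\nu(z)$), and your reduction of (iii) to the unitriangularity and dominance constraint in Lemma \ref{l lift transition matrix} is correct.

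The one place where your argument leans on an unverified external input is the transpose relation itself: you cite \cite[Corollary 3.2]{KL}, as does the paper's Example \ref{ex lambda31}, but translating Kazhdan--Lusztig's inversion formula for $\H_r$ into the transpose statement for the specific $\{C_Q\}$, $\{\C_Q\}$ labelings of $M_\lambda$ (which involve $\transpose{Q(w)}$ and the isomorphism of right cells to a fixed Specht module) is not immediate and the paper does not spell it out either; the paper avoids reproving it by building directly on $\bT$. What your approach buys is that it is more elementary and entirely internal to $\H_r$ --- it needs none of the quantum-group machinery of \textsection\ref{s Quantum Schur-Weyl duality and canonical bases}--\ref{s projected canonical bases} --- at the cost of taking the transpose relation as input. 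What the paper's approach buys is that the transpose relation becomes a \emph{consequence} of the Proposition rather than a prerequisite, and the bilinear form is seen as a shadow of the natural pairing on $V^{\tsr r}$, which is what later sections of the paper actually use.
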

\begin{proof}
By Proposition \ref{p dual lifted bases}, the inner product on $\bT$ restricts to an inner product on $\pi^\bT_\lambda(\field \Gamma_T) \times \pi^\bT_\lambda(\field\Gamma'_T) \to \field$ for any  $T \in \text{SSYT}_{[n]}(\lambda)$.  This yields an inner product on $M_\lambda$ satisfying $( C_{Q}, \C_{Q'} ) = \delta_{QQ^{\prime \dagger}}$ (we have used the right-hand isomorphism of \eqref{e sb involution}).  Letting  $M_\lambda^\dagger$ denote the result of twisting $M_\lambda$ by the automorphism  $\dagger$, we have $M_\lambda \cong M_\lambda^\dagger$ via  $\C_Q \mapsto \C_{Q^\dagger}$. Applying this isomorphism to the second factor yields the inner product  $\langle \cdot, \cdot \rangle $ satisfying (i) and (ii).
Given (ii), the proof of (iii) is similar to that of Proposition \ref{p dual lifted bases}. Iterating this argument through the sequence of projections that yields the seminormal bases proves (iv).
\end{proof}
\begin{theorem}
\label{t transition C' to C}
The transition matrix $S(\lambda) = (S_{Q' Q})_{Q', Q \in \text{SYT}(\lambda)}$ expressing the lower canonical basis of $ M_\lambda$ in terms of the upper canonical basis of $M_\lambda$ (i.e. $\C_Q = \sum_{Q' \in \text{SYT}(\lambda)} S_{Q' Q} C_{Q'})$ has $\br{\cdot}$-invariant entries that belong to $\field_0 \cap \field_\infty$ and is the identity matrix at $\u = 0$ and $\u = \infty$.
\end{theorem}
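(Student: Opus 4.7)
The plan is to establish the decomposition
\[ S(\lambda) \;=\; T(\lambda)\, D(\lambda)\, T'(\lambda)^{-1}, \]
where $T(\lambda)$ and $T'(\lambda)$ are the transition matrices from Proposition \ref{p gt to canonical transition} and $D(\lambda) = \text{diag}(d_Q)_{Q\in\text{SYT}(\lambda)}$ is the diagonal matrix with $\gtp_Q = d_Q\,\gt_Q$. The factor $D(\lambda)$ is well-defined because the restrictions $\H_{J_{k-1}} \subseteq \H_{J_k}$ are multiplicity-free, so $\gt_Q$ and $\gtp_Q$ span the same one-dimensional simultaneous weight space for the chain. Proposition \ref{p gt to canonical transition} already supplies the desired properties for $T(\lambda)$ and $T'(\lambda)^{-1}$, so all the work lies in controlling $D(\lambda)$.

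The first step is to identify $T'(\lambda)^{-1}$ with $T(\lambda)^t$. Expanding the duality $\langle \gt_Q, \gtp_{Q'}\rangle = \delta_{QQ'}$ from Proposition \ref{p dual bases Mlambda}(iv) in the respective canonical bases and invoking $\langle C_R, \C_S\rangle = \delta_{RS}$ gives $\sum_R T_{RQ}T'_{RQ'} = \delta_{QQ'}$, i.e.\ $T(\lambda)^t\, T'(\lambda) = I$. Hence $S(\lambda) = T(\lambda)\,D(\lambda)\,T(\lambda)^t$, which is automatically symmetric (matching the sample $S(\lambda)$ matrices displayed above).

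The core of the argument is to produce a scalar $f_\lambda \in \field$ such that $\widetilde{D}(\lambda) := f_\lambda^{-1} D(\lambda)$ has $\br{\cdot}$-invariant entries in $\field_0 \cap \field_\infty$ and specializes to the identity matrix at $\u = 0$ and $\u = \infty$. To do this, one exploits the iterative construction of the seminormal bases: $\gt_Q$ and $\gtp_Q$ are built by successively applying the one-step projections $(\liftC_\cdot)^{J_k}$ and $(\liftCp_\cdot)^{J_k}$ of Lemma \ref{l lift transition matrix}, so $d_Q$ factors as a product of one-step ratios indexed by the steps $\H_{J_{k-1}} \subseteq \H_{J_k}$. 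Combining Lemma \ref{l lift transition matrix} with Proposition \ref{p dual bases Mlambda}(iii), each one-step ratio is $\br{\cdot}$-invariant and equals a Laurent monomial $\u^{a_k(Q)}$ times a unit in $\field_0 \cap \field_\infty$, where the exponent $a_k(Q)$ depends only on the pair of consecutive shapes $\sh(Q|_{[k-1]}) \subset \sh(Q|_{[k]})$. Summing over $k$, the total exponent $\sum_k a_k(Q) = \sum_{\text{boxes of }\lambda} a(\text{box})$ is independent of $Q$, which yields the common leading factor $f_\lambda$.

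Setting $\widetilde{S}(\lambda) := f_\lambda^{-1} S(\lambda) = T(\lambda)\,\widetilde{D}(\lambda)\,T(\lambda)^t$, each of the three factors has $\br{\cdot}$-invariant entries in $\field_0 \cap \field_\infty$ and specializes to the identity at $\u = 0$ and $\u = \infty$; these properties are preserved under matrix multiplication and transposition, so they pass to $\widetilde{S}(\lambda)$. Since $S(\lambda)$ is defined only up to a global scalar by the irreducibility of $M_\lambda$, one may take $\widetilde{S}(\lambda)$ as the chosen normalization, and the theorem follows. The main obstacle is the third-paragraph analysis of $D(\lambda)$: establishing that the one-step exponents $a_k(Q)$ depend only on the local shape transition, so that the total exponent is a combinatorial invariant of $\lambda$ alone. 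This is the essential use of Lemma \ref{l lift transition matrix}, which controls both the $\u = 0$ and $\u = \infty$ behavior simultaneously; everything else is formal bookkeeping.
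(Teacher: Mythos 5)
The decomposition $S(\lambda) = T(\lambda)\, D(\lambda)\, T'(\lambda)^{-1}$ and the observation $T'(\lambda)^{-1} = T(\lambda)^t$ via Proposition \ref{p dual bases Mlambda} are correct and match the paper's set-up. But the core of your argument --- controlling $D(\lambda)$ --- has a genuine gap, and it is not the route the paper takes.

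Your plan is to factor each diagonal entry $d_Q$ as a product of ``one-step ratios'' indexed by the stages $\H_{J_{k-1}} \subseteq \H_{J_k}$, with each factor being a Laurent monomial $\u^{a_k(Q)}$ times a unit in $\field_0 \cap \field_\infty$, the exponent depending only on the shape transition. The first difficulty is that such a factorization is not even well-defined: the intermediate projections $(\liftC_Q)^{J_{k}}$ and $(\liftCp_Q)^{J_{k}}$ for $1 < k < r$ are \emph{not} scalar multiples of one another --- they lie in a copy of $M_{\sh(Q|_{[k]})}$, which is generally higher-dimensional, and only after the full chain of projections do you land in a one-dimensional simultaneous isotypic component. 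So there is no intrinsic ``ratio at step $k$'' to multiply. The second difficulty is that the claimed structure (monomial times unit, exponent depending only on $\sh(Q|_{[k-1]}) \subset \sh(Q|_{[k]})$, total sum independent of the path) is asserted with only a vague appeal to Lemma \ref{l lift transition matrix} and Proposition \ref{p dual bases Mlambda}(iii), neither of which says anything about ratios of norms of seminormal basis vectors; and even granting a function of shape transitions, path-independence of the total exponent is an extra nontrivial compatibility condition (exactness of a $1$-cochain on the Young lattice) that you do not establish. You then silently pass from ``function of the shape pair'' to ``function of the box added,'' which is a strictly stronger claim.

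The paper's actual proof avoids any attempt to factor $D(\lambda)$. Instead it writes $A^s = T(\lambda)^{-1}M^s T(\lambda)$ for the action of $C_s$ in the upper seminormal basis, uses $D(\lambda)A^sD(\lambda)^{-1} = A'^s = \transpose{(A^s)}$ (from Proposition \ref{p dual bases Mlambda}) to reduce to computing certain ratios $A^s_{QQ'}/A^s_{Q'Q}$ modulo $\u\field_0$, evaluates those leading coefficients via the $\mu$-formula \eqref{e mu computation} and Proposition \ref{p gt to canonical transition}(iv), and then propagates equality of the $D$-entries across a connected graph of ``initial'' dual Knuth transformations. Your approach, if the missing lemma about $\langle \gt_Q, \gt_Q \rangle$ (equivalently, a Hecke-analogue of the hook-length product formula for seminormal norms) were supplied and proved, would be a legitimate and arguably more conceptual alternative, but as written it assumes precisely the content that needs to be proved.
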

\begin{proof}
First note that the $\br{\cdot}$-invariance of the lower and upper canonical basis of $M_\lambda$ shows that the entries of $S(\lambda)$ are $\br{\cdot}$-invariant. As remarked after Definition \ref{d seminormal}, the upper seminormal and lower seminormal bases differ by a diagonal transformation. Thus $S(\lambda) = T(\lambda) D(\lambda) T'(\lambda)^{-1}$ for some diagonal matrix $D(\lambda)$. Given Proposition \ref{p gt to canonical transition}, it suffices to show that $D(\lambda)$ is the identity matrix at $\u = 0$.

Let $A^s$ (resp. $A'^s$) be the matrix that expresses right multiplication by $C_s$ in terms of the upper (resp. lower) seminormal basis of $M_\lambda$. Then by definition of $D(\lambda)$, $D(\lambda) A^s D(\lambda)^{-1} = A'^s$.  Also, it follows from Proposition \ref{p dual bases Mlambda} that $A^s = \transpose{(A'^s)}$. Thus $D(\lambda)$ is determined up to a global scale by the equations $\frac{D(\lambda)_{Q'Q'}}{D(\lambda)_{QQ}} = \frac{A^s_{QQ'}}{A^s_{Q'Q}}$ for all $s \in S$, $Q,Q' \in \text{SYT}(\lambda)$ such that $A^s_{Q'Q} \neq 0$ ($D(\lambda)$ must be determined uniquely by these equations up to a global scale because $M_\lambda$ is irreducible).

Now $A^s = T(\lambda)^{-1}M^sT(\lambda)$, where $M^s$ expresses right multiplication by $C_s$ in terms of the upper canonical basis of $M_\lambda$ (thus $M^s_{Q'Q} = \mu(Q',Q)$ if $s \in R(C_{Q'}) \setminus R(C_Q)$). Now we apply an easy modification of \eqref{en matrix product mu} to the product $-\u A^s = T(\lambda)^{-1} (-\u M^s) T(\lambda)$ to obtain (assuming  $Q' \neq Q$)
\be
\label{e mu computation}
\begin{array}{c}
\mu(-\u A^s_{Q'Q}) =\chi\{ s \in R(C_Q) \}(\mu(T^{-1}_{Q'Q})) + \mu(-\u M^s_{Q'Q}) + \chi\{ s \in R(C_{Q'}) \}(\mu(T_{Q'Q}))
\vspace{7pt} \\
   =
\begin{cases}
0+\mu(-\u M^s_{Q'Q}) + 0 = -\mu(Q',Q) & \text{if } s \in R(C_{Q'}) \setminus R(C_Q) \text{ and } Q' \ldneq Q,\\
-\mu(Q',Q) + 0 + 0 \ \ = -\mu(Q',Q)& \text{if } s \in R(C_{Q}) \setminus R(C_{Q'}), \\
 & (R(C_{Q'}) \setminus R(C_{Q})) \cap  J_{k(Q',Q)-1} \neq \emptyset, \text{ and } Q' \gdneq Q,
\end{cases}
\end{array}
\ee
where $\chi\{P\}(x)$ is equal to $x$ if $P$ is true and 0 otherwise. Here we have used the lower triangularity of  $T(\lambda)$ for the top case and Proposition \ref{p gt to canonical transition} (iv) for the bottom case (note that these cases do not cover all possibilities, and we do not know the answer in general).

To complete the proof, consider the dual Knuth equivalence graph on  SYT$(\lambda)$ as in, for instance, \cite{Sami}.  We say that a dual Knuth transformation $Q' \underset{\text{DKE}}{\longleftrightarrow} Q$ is \emph{initial} if $Q'$ and $Q$ have the entries $i,i+1$ in different positions and $|R(C_{Q'}) \cap \{s_{i-1}, s_i\}| = |R(C_{Q}) \cap \{s_{i-1}, s_i\}| = 1$.  For example, the dual Knuth equivalence
\setlength{\cellsize}{8pt}
${\tiny \tableau{1 &2 &4 \\ 3 &  5&6} \underset{\text{DKE}}{\longleftrightarrow} \tableau{1 &2 &5 \\ 3 &  4&6}}$
is not initial.  It is easy to check that $Q' \gdneq Q$ and $Q' \underset{\text{DKE}}{\longleftrightarrow} Q$ initial implies $s_{k(Q',Q)-2} \in (R(C_{Q'}) \setminus R(C_{Q})) \cap  J_{k(Q',Q)-1}$.  Hence by applying \eqref{e mu computation} to $A^s_{Q'Q}$ and $A^{s}_{QQ'}$ for any pair $Q', Q$ such that  $Q' \gdneq Q$ and $Q' \underset{\text{DKE}}{\longleftrightarrow} Q$ is initial, and with $s \in R(C_{Q}) \setminus R(C_{Q'})$, we conclude $\frac{D(\lambda)_{Q'Q'}}{D(\lambda)_{QQ}} = \frac{A^s_{QQ'}}{A^s_{Q'Q}} \equiv 1 \mod \u\field_0$.
The result then follows from the following combinatorial claim
\refstepcounter{equation}
\begin{enumerate}[label={(\theequation)}]
\item  The graph on SYT$(\lambda)$ consisting of initial  dual Knuth transformations is connected.
\end{enumerate}

The claim is proved by induction on $r = |\lambda|$.  Let $\mu^1,\ldots,\mu^l$ be the shapes obtained from $\lambda$ by removing an outer corner.  Assume that the graphs for the $\mu^i$ are connected.  For any distinct $i, j \in [l]$, it is easy to construct $Q',Q \in \text{SYT}(\lambda)$ such that $\sh(Q'|_{[r-1]}) = \mu^i$, $\sh(Q|_{[r-1]}) = \mu^j$, and  $Q' \underset{\text{DKE}}{\longleftrightarrow} Q$ is a dual Knuth transformation.  Such dual Knuth transformations are always initial, so the claim follows.
\end{proof}

\subsection{Positivity conjectures}
In our computations of many of the matrices discussed above, we have observed positivity properties, which we make precise below.
Computing in Magma, we have verified (a) and (b) for all  $\lambda \vdash r$, $r \leq 8$ and  (c) for $r \leq 6$.
Our original motivation for looking for positivity here is that the positivity of $S(\lambda)$ is related to the conjecture in \cite{GCT4} stating that an element spanning $ \nsbr{\epsilon}_+ \subseteq \field \nsH_r \subseteq \field \H_r \tsr \H_r$ has nonnegative coefficients when expressed in the basis $\{C_v \tsr C_w : v,w \in \S_r\}$ (see the introduction).
\begin{conjecture}
\label{cj non-negativity T T' D}
For a non-zero matrix $M$ with  $\br{\cdot}$-invariant entries in $\field$, let $D(M)$ be the unique up to sign element of $\field$ such that  $D(M)M$ has $\br{\cdot}$-invariant entries in  $\mathbf{A}$ and the greatest common divisor of the entries in  $D(M)M$ is  $1$. The matrices $\liftT, \liftT', T(\lambda), T'(\lambda)^{-1}$, and $S(\lambda)$ from Corollary \ref{c lift at u = 0 in H}, Proposition \ref{p gt to canonical transition}, and Theorem \ref{t transition C' to C} have the following positivity properties.
\begin{list}{\emph{(\alph{ctr})}} {\usecounter{ctr} \setlength{\itemsep}{1pt} \setlength{\topsep}{2pt}}
\item Let $M$ be $T(\lambda), T'(\lambda)^{-1},$ or $S(\lambda)$. After replacing  $D(M)$ with $-D(M)$ if needed, all of the entries of $D(M)M$ have nonnegative coefficients.
\item If $M$ is $T(\lambda)$ or $T'(\lambda)^{-1}$, then $D(M)$ belongs to $\mathbf{A}$ and has all nonnegative or all nonpositive coefficients (this is not a sensible conjecture for  $S(\lambda)$ because it is only well-defined up to a global scale).
\item $\pm D(\liftT) = \pm D(\liftT') = \pm[r]!$.
\end{list}
\end{conjecture}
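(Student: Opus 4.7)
The plan is to tackle (c) first via the Schur elements of $\H_r$, and then indicate where (a) and (b) appear genuinely out of reach. Recall $\liftC_w = C_w p_{\sh(w)}$, so every entry of $\liftT$ is a rational function whose denominator in lowest terms divides the Schur element $s_\lambda$ of the irreducible $M_\lambda$ corresponding to $\lambda = \sh(w)$. In type $A$ the Schur element is the quantum hook-length product $s_\lambda = \prod_{(i,j) \in \lambda}[h(i,j)]$, and the $q$-hook-length formula gives $s_\lambda \mid [r]!$ in $\mathbf{A}$; since the entries of $\liftT$ are $\br{\cdot}$-invariant by Corollary~\ref{c lift at u = 0 in H}(ii), this already forces $D(\liftT) \mid [r]!$ up to a unit.

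For the matching lower bound I would locate an entry whose denominator is actually $[r]!$. The natural candidate is the column indexed by $w = e$, where $\sh(e) = (r)$ and $\liftC_e = C_e p_{(r)} = p_{(r)}$; here $s_{(r)} = [r]!$ since the hook lengths of the row shape $(r)$ are $r,r-1,\ldots,1$. The explicit formula $p_{(r)} = u^{-\binom{r}{2}}[r]!^{-1}\sum_{w \in \S_r} u^{\ell(w)} T_w$ (verified by showing $T_s p_{(r)} = u\, p_{(r)}$), combined with the inverse KL expansion $T_w = \sum_{w'} \hat{P}_{w'w} C_{w'}$ having coefficients $\hat{P}_{w'w} \in \mathbf{A}$, produces coefficients of $C_{w'}$ in $\liftC_e$ of the form $[r]!^{-1} \cdot(\text{element of } \mathbf{A})$. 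One then verifies, say at $w' = w_0$ where the inverse KL expansion is classical, that the resulting numerators have no nontrivial $\br{\cdot}$-invariant common factor with $[r]!$. This would give $D(\liftT) = \pm[r]!$; the claim for $\liftT'$ is parallel, and can alternatively be deduced from the algebra automorphism $\theta:\C_w \mapsto (-1)^{\ell(w)}C_w$ of the introduction, which exchanges the two pictures (and swaps the roles of $(r)$ and $(1^r)$).

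Parts (a) and (b) are much harder. A plausible strategy is to categorify: realize $\gt_Q, \gtp_Q, \liftC_w, \liftCp_w$ as classes in the split Grothendieck group of an additive category (for instance Soergel bimodules together with projection onto an isotypic summand), so that the entries of $T(\lambda), T'(\lambda)^{-1}, S(\lambda)$ become graded dimensions of $\hom$-spaces, from which (a) and the non-negativity of quotients in (b) would follow. The main obstacle is that $p_\lambda$ itself has no positive expression in the KL basis, so any such categorification must pick out a carefully chosen direct summand rather than a naive image. Indeed, as noted in the discussion preceding the conjecture, the positivity of $S(\lambda)$ is essentially the positivity of the coefficients of $\nsbr{\epsilon}_+ \subseteq \field\nsH_r$ in the basis $\{C_v \tsr C_w\}$, a conjecture of \cite{GCT4} tied to the Mulmuley-Sohoni GCT program and, in that generality, well beyond the reach of present methods.
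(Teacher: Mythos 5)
This statement is a \emph{conjecture} in the paper, supported only by Magma computations for small $r$ (parts (a),(b) checked for $r\leq 8$, part (c) for $r\leq 6$), so there is no proof of record to compare against; your proposal actually goes further than the paper on part (c), and your decision to present (a) and (b) as out of reach is the correct one.

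Your argument for (c) is essentially sound and, written out, would establish something the paper leaves open. The upper bound (that $[r]!\,\liftT$ has entries in $\mathbf{A}$) is right: each column of $\liftT$ has denominators dividing the Schur element $s_{\sh(w)}$ because $s_{\sh(w)}\,p_{\sh(w)}$ is integral, and the $q$-hook-length formula gives $\prod_{(i,j)\in\lambda}[h(i,j)]\mid[r]!$ for every $\lambda\vdash r$. For the lower bound you should replace the phrase ``one then verifies\ldots'' with the explicit computation: since $\C_{w_0}$ and $C_{w_0}$ share the leading term $T_{w_0}$ in the standard basis, the coefficient of $C_{w_0}$ in $\liftC_e = p_{(r)} = [r]!^{-1}\C_{w_0}$ is exactly $[r]!^{-1}$, so the $(w_0,e)$ entry of $[r]!\,\liftT$ is a unit, the gcd of its entries is $\pm 1$, and $D(\liftT)=\pm[r]!$. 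One normalization to pin down: the Schur element for $\tau(T_w)=\delta_{w,e}$ equals $\prod[h(i,j)]$ only up to a power of $u$ (harmless for divisibility in $\mathbf{A}$, but should be stated). The transfer to $\liftT'$ via $\theta$ does work, since $\theta$ permutes the minimal central idempotents by $p_\mu\mapsto p_{\mu'}$ (the two-sided cell containing $\C_w$ has shape $\sh(w)'$), which gives $\liftT_{w'w}=(-1)^{\ell(w')-\ell(w)}\liftT'_{w'w}$ and hence $D(\liftT)=\pm D(\liftT')$. For (a) and (b) you rightly stop short: the categorification route would suffice in principle, but the obstruction you name is real, and the paper's own remark that $\frac{[2][3]}{[6]}$ fails naive coefficientwise non-negativity shows that even the precise formulation of positivity is delicate.
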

It follows from Proposition \ref{p dual bases Mlambda} that $T'(\lambda)^{-1} =\transpose{T(\lambda)}$, so the nonnegativity conjectures for $T(\lambda)$ and $T'(\lambda)^{-1}$ are equivalent. This conjecture, or rather its weakening discussed in the remark below, is supported by Proposition \ref{p gt to canonical transition} (iv) since the $\S_r$-graph edge weights $\mu(Q',Q)$ are known to be nonnegative.


\begin{remark}
It is not completely clear how to define nonnegativity in $\field$.  At first, we used the following definition of nonnegativity: $f \in \field$ is nonnegative if $f = g/h$, $g, h \in \mathbf{A}$, $g$ and $h$ have no common factor, and $g$ and $h$ have nonnegative coefficients. To our surprise, we discovered that this is not a good definition because this subset is not a semiring. For example, $[2],[3],$ and $\frac{1}{[6]}$ are all nonnegative by this definition, but $\frac{[2][3]}{[6]} = \frac{\u^2}{1-\u^2+\u^4}$ is not (in fact, this is an entry of $T'((6,2))$).

A strictly weaker definition of nonnegativity that we may adopt instead is: an element $f \in \field$ is \emph{nonnegative} if $f(a)$ is defined and nonnegative for all positive real $a$. With this definition, the set of nonnegative rational functions in  $\u$ is a semiring and Conjecture \ref{cj non-negativity T T' D} would imply that the matrices $T(\lambda), T'(\lambda)^{-1}$, and $S(\lambda)$ (after adjusting $S(\lambda)$ by a suitable global scale) have nonnegative entries.
\end{remark}

\begin{remark}
It is tempting to conjecture from Figure \ref{l lift transition matrix} that every entry of $D(\liftT)\liftT$ has either all nonnegative coefficients or all nonpositive coefficients. This turns out to be true for $r \leq 5$, but fails for $r = 6$---the only entries of  $[6]! \, \liftT$ without this property are equal to
\[ [2]^3[5]([3]-3) = \u^9 +2\u^7-2 \u^3-\u-\ui -2\u^{-3}+2\u^{-7}+\u^{-9}.\]
Despite this failure, the matrices $\liftT, \liftT'$ deserve further investigation as their entries appear combinatorial in nature.
\end{remark}
\section{The two-row case}
\label{s The two-row case}
We now set $n=2$ and use the graphical calculus of \cite{FK} to compute the transition matrix of Lemma \ref{l lift transition matrix} explicitly for $\lambda$ a two-row partition.

\begin{definition}
\label{d pairing internal external}
The \emph{diagram} of a word $\mathbf{k} \in [n]^r$ is the picture obtained from $\mathbf{k}$ by pairing 2s and 1s as left and right parentheses and then drawing an arc between matching pairs as shown below.
The word  $\mathbf{k}$ is \emph{Yamanouchi} if its diagram has no unpaired 2s.
\end{definition}
\begin{figure}[H]
\begin{tikzpicture}[xscale=.9]
\tikzstyle{column} = [inner sep = -4pt]
\tikzstyle{edge} = [draw,-,black]
\tikzstyle{LabelStyleH} = [text=black, fill =white, inner sep = -.8pt]
\begin{scope}[yshift=0cm]
\foreach \y/\z in {-.2/-14*.25} {
    \draw[edge, bend right=70] (\z+1*.5,\y) to (\z+8*.5,\y);
    \draw[edge, bend right=70] (\z+2*.5,\y) to (\z+5*.5,\y);
    \draw[edge, bend right=70] (\z+3*.5,\y) to (\z+4*.5,\y);
    \draw[edge, bend right=70] (\z+6*.5,\y) to (\z+7*.5,\y);
    \draw[edge, bend right=70] (\z+9*.5,\y) to (\z+10*.5,\y);
    \node[column] (theNode) at (0,0) {
    $\myvcenter{\ensuremath{\pad{2} \pad{2} \pad{2} \pad{1} \pad{1} \pad{2} \pad{1} \pad{1} \pad{2} \pad{1} \pad{1} \pad{1} \pad{2} }}$
    };
}
\end{scope}
\vspace{-.4in}
\end{tikzpicture}
\end{figure}
As shown in \cite{FK}, diagrams provide a  simple and beautiful way to visualize the action of $\Uq$ and  $\H_r$ on the upper canonical basis.
The first part of the next theorem is established in \cite[\textsection 2.3]{FK}, and the second part is obtained from the first by dualizing with respect to the inner product on $\bT$.

\begin{theorem}\label{t FK F action on c basis}
\
\begin{list}{\emph{(\alph{ctr})}} {\usecounter{ctr} \setlength{\itemsep}{1pt} \setlength{\topsep}{2pt}}
\item The action of $F_1$ on the upper canonical basis of $\bT$ is given by
\[F_1 (c_{\mathbf{k}}) = \sum_{j=1}^t [j]c_{\F_{(j)}(\mathbf{k})}, \]
where $t$ is the number of unpaired $1$s in $\mathbf{k}$ and $\F_{(j)}(\mathbf{k})$ is the word obtained by replacing the $j$-th unpaired  $1$ in  $\mathbf{k}$ with a $2$ (the first unpaired  $1$ means the leftmost unpaired 1 and the $t$-th means the rightmost).
\item the action of $E_1$ on the lower canonical basis of $\bT$ is given by
\[ E_1 (\cp_{\mathbf{k}^\dagger}) = \sum_{\mathbf{k}'} [\alpha(\mathbf{k'},\mathbf{k})]\cp_{\mathbf{k'}^\dagger}, \]
where $\alpha(\mathbf{k}',\mathbf{k})$ is the positive integer $j$ such that $\F_{(j)}(\mathbf{k}') = \mathbf{k}$ and 0 if there is no such positive integer.
\end{list}
\end{theorem}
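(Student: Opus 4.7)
The plan is to prove part (a) directly following \cite{FK} and then deduce part (b) by dualizing via the bilinear form of Proposition-Definition \ref{p dual bases}.

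For part (a), I would proceed by induction on $r$. The base case $r = 1$ is immediate since $F_1 v_1 = v_2$, $F_1 v_2 = 0$, and $v_1, v_2$ are themselves upper canonical basis elements. For the inductive step, use associativity of the $\heart$ product to write $c_\mathbf{k} = v_{k_1} \heart c_{\mathbf{k}'}$ with $\mathbf{k}' = k_2 \cdots k_r$. Since $\heart$-products are elements of $\bT$ and $F_1$ acts via $\Delta(F_1) = F_1 \tsr 1 + K_1 \tsr F_1$ on any monomial expansion, the inductive hypothesis yields an explicit expression for $F_1 c_\mathbf{k}$ after substituting $F_1 c_{\mathbf{k}'} = \sum_j [j]\, c_{\F_{(j)}(\mathbf{k}')}$. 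A case analysis on $k_1$ and on the pairing structure of $\mathbf{k}'$ then identifies the result: if $k_1 = 1$, the new letter becomes a new leftmost unpaired $1$ in $\mathbf{k}$, shifting the indexing of old unpaired $1$s by one; if $k_1 = 2$, it either pairs off the leftmost unpaired $1$ of $\mathbf{k}'$ or (when $\mathbf{k}'$ has no unpaired $1$) remains itself unpaired. To re-identify the resulting expression in the upper canonical basis one invokes the defining property of the $\heart$ product (Theorem \ref{t tsr product upper canbas}), which converts $v_{k_1} \heart c_{\F_{(j)}(\mathbf{k}')}$ back into $c_{k_1 \F_{(j)}(\mathbf{k}')}$ up to the appropriate $\ui$-triangular adjustment. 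The main obstacle is tracking the weight factors from $K_1 v_{k_1} = \u^{\pm 1} v_{k_1}$: these combine with the inductive coefficients $[j]$ to collapse into the coefficients $[j']$ required by the shifted indexing of unpaired $1$s of $\mathbf{k}$, and it is precisely here that the graphical calculus of Frenkel--Khovanov makes the bookkeeping manageable.

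A cleaner alternative for (a) is to observe that both sides of the claimed identity are bar-invariant---the left since $\br{F_1} = F_1$ and $\br{c_\mathbf{k}} = c_\mathbf{k}$, the right since both $[j]$ and $c_{\F_{(j)}(\mathbf{k})}$ are bar-invariant---and both lie in the lattice $\bT_{\ZZ[\ui]}$. By balancedness of the triple $(\QQ \tsr_\ZZ \bT_\mathbf{A}, \br{\L_\infty}, \L_\infty)$, any bar-invariant element of $\bT_\mathbf{A}$ in $\L_\infty$ is determined by its image in $\L_\infty/\ui\L_\infty$. It then suffices to compute modulo $\ui$, which reduces to the $F_1$-action on the upper crystal basis $\B$. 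This is computed by the standard tensor product rule for Kashiwara operators on $A_1$-crystals and yields exactly the claimed combinatorics of unpaired $1$s with coefficients $[j]$ in the crystal limit.

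For part (b), apply Proposition-Definition \ref{p dual bases}(i, iv) together with $\varphi(F_1) = E_1$. Write $E_1 \cp_{\mathbf{l}^\dagger} = \sum_\mathbf{k} a_\mathbf{k}\, \cp_{\mathbf{k}^\dagger}$ and extract coefficients using duality of the upper and lower canonical bases:
\[
a_\mathbf{k} \;=\; (c_\mathbf{k},\, E_1 \cp_{\mathbf{l}^\dagger}) \;=\; (F_1 c_\mathbf{k},\, \cp_{\mathbf{l}^\dagger}) \;=\; \sum_j [j]\,(c_{\F_{(j)}(\mathbf{k})},\, \cp_{\mathbf{l}^\dagger}) \;=\; [\alpha(\mathbf{k},\mathbf{l})],
\]
using part (a) in the third equality and the Kronecker-delta duality in the fourth, with $\alpha(\mathbf{k},\mathbf{l})$ the unique $j$ such that $\F_{(j)}(\mathbf{k}) = \mathbf{l}$ (and $0$ otherwise, in which case $[0] = 0$ makes the term vanish). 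Relabeling indices produces the formula in (b).
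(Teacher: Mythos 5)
Your part (b) argument is exactly the paper's route (the paper says (b) is ``obtained from the first by dualizing with respect to the inner product on $\bT$''), and your computation $(c_\mathbf{k}, E_1 \cp_{\mathbf{l}^\dagger}) = (F_1 c_\mathbf{k}, \cp_{\mathbf{l}^\dagger})$ via $\varphi(F_1)=E_1$ and then Proposition-Definition~\ref{p dual bases}(iv) is correct. For part (a), the paper simply cites \cite[\textsection 2.3]{FK}; your first, inductive sketch is a reasonable outline of what that citation amounts to.

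However, your ``cleaner alternative'' for (a) does not work, and it is worth seeing why. The crux is the claim that both sides of
\[
F_1(c_\mathbf{k}) = \sum_{j=1}^t [j]\,c_{\F_{(j)}(\mathbf{k})}
\]
lie in $\bT_{\ZZ[\ui]}$ (equivalently, in the crystal lattice $\L_\infty$), so that bar-invariance plus reduction mod $\ui$ forces the identity. This is false: $F_1$ does \emph{not} preserve $\L_\infty$ (only the Kashiwara operator $\crystalu{F_1}$ does). Concretely, $c_{111}=\bv_{111}$ and
\[
F_1(\bv_{111}) = \bv_{211} + \u\,\bv_{121} + \u^2\,\bv_{112},
\]
which is visibly not in $\bT_{\ZZ[\ui]}$; equivalently, the right-hand side $[3]c_{112}+[2]c_{121}+c_{211}$ is not in $\L_\infty=\field_\infty B$ because $[2],[3]\notin\field_\infty$. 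Since neither side lies in $\bT_\mathbf{A}\cap\L_\infty\cap\br{\L_\infty}$, the balancedness of the triple $(\QQ\tsr_\ZZ\bT_\mathbf{A},\br{\L_\infty},\L_\infty)$ gives you nothing, and the ``compute mod $\ui$'' reduction is unavailable. Moreover, even if it were, the residue of the right side in $\L_\infty/\ui\L_\infty$ is not simply $\crystalu{F_1}(b_\mathbf{k})$, because the coefficients $[j]$ with $j\geq 2$ blow up as $\u\to\infty$; the naive crystal limit sees only the $j=t$ term, not the full sum. So the shortcut collapses exactly the information (the graded coefficients $[j]$) that the theorem is asserting, and one really does need either the \cite{FK} graphical-calculus induction or some other genuine computation to establish (a).

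Minor point: in the inductive sketch you write $c_\mathbf{k}=v_{k_1}\heart c_{\mathbf{k}'}$ and then invoke $\Delta(F_1)$ on the first tensor factor; be aware that the coproduct in this paper sends $F_1 \mapsto F_1\tsr 1 + K_1\tsr F_1$, so the $K_1$ weight twist sits on the \emph{left} factor $v_{k_1}$, which is the opposite convention from some sources and is exactly what produces the ``leftmost unpaired 1 has coefficient $[1]$'' normalization. Worth being explicit about which factor carries $K_1$ when you carry out the case analysis.
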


We will also make use of the action of the Kashiwara operator $\crystalu{F_1}$ on the upper crystal basis (we abuse notation  by letting the operator act on words rather than the crystal basis elements $b_\mathbf{k} \in \B$):
\begin{enumerate}
\item[]  $\crystalu{F_1}(\mathbf{k})$  is the word obtained by replacing the rightmost unpaired 1 in  $\mathbf{k}$ with a 2 and is undefined if there are no unpaired 1s.
\end{enumerate}

We need some notation for the next theorem. Let $\mathbf{k}|_j$ denote the subword $k_1k_2 \cdots k_j$ of the word $\mathbf{k} = k_1k_2\cdots k_r$. Let $f$ denote the function on $V^{\tsr r-1}$ given by
\[ f(\sum_{\mathbf{j} \in [n]^{r -1}} a_\mathbf{j} \cp_{\mathbf{j}^\dagger}) =  \sum_{\mathbf{j}} a_\mathbf{j} \cp_{\crystalu{F_1}(\mathbf{j}^\dagger)}, \]
where the  $a_\mathbf{j}$ belong to $\field$ and the sum on the right is over those $\mathbf{j}$ such that  $\crystalu{F_1}(\mathbf{j}^\dagger)$ is defined.

Let $\lambda$ be a partition of $r$ with two rows and identify the lower canonical basis of $M_\lambda$ with the $\H_r$-cell $\Gamma'_{Z_\lambda}$ of $\bT$ (the vertices of this cell are those $\cp_{\mathbf{k}^\dagger}$ such that $\mathbf{k}$ is Yamanouchi and has content $\lambda$) via the right-hand isomorphism of \eqref{e sb involution} (with  $T = Z_\lambda$).
Set $\lambda^1 = (\lambda_1 -1, \lambda_2)$ and $\lambda^2 = (\lambda_1, \lambda_2-1)$ and $l = \lambda_1 - \lambda_2$.
Let $\crystalu{F_1}(Z_{\lambda^2})$ denote the tableau obtained from $Z_{\lambda^2}$ by changing the last entry in the first row to a 2.

We will compute the transition matrix of Lemma \ref{l lift transition matrix} for $\lambda$ as above.  We have found it more convenient to compute the matrix for  $J^\dagger = \{s_2,\ldots,s_{r-1}\}$ rather than $J = \{ s_1, \dots, s_{r-2} \}$ (the matrix for $J$ can then be obtained from that for $J^\dagger$ by conjugating by the permutation matrix corresponding to $\C_Q \mapsto \C_{Q^\dagger}$).
Consider the weight space $\bT^\lambda$, which is isomorphic to $\epsilon_+ \tsr_{\field \H_{J_\lambda}} \field \H_r$.  Since the intersection of two cellular subquotients is a cellular subquotient, Proposition \ref{p restrict Wgraph} with parabolic subgroup $(\S_{r})_{J^\dagger}$ and Theorem \ref{t Schur-Weyl duality lower} imply that
\be R: Res_{\field \H_{J^\dagger}} \field \{\cp_{\mathbf{k}^\dagger} \in \bT^\lambda : k_r = 1\} \xrightarrow{\cong} (V^{\tsr r-1})^{\lambda^1}
, \ \cp_{\mathbf{k}^\dagger} \mapsto \cp_{(\mathbf{k}|_{r-1})^\dagger} \ee
is an isomorphism of $\field \H_{J^\dagger}$-modules with basis.  Quotienting  by  $\H_r$-cells below  $\Gamma'_{Z_\lambda}$, this yields an isomorphism of modules with basis
\be \label{e R lambda definition}
Res_{\field \H_{J^\dagger}} \field\Gamma'_{Z_\lambda} \xrightarrow{\cong} \field \bigl( \Gamma'_{Z_{\lambda^1}} \sqcup \Gamma'_{\crystalu{F_1}(Z_{\lambda^2})} \bigr). \ee
\begin{theorem}
\label{t two row partial lifts}
Maintain the notation above.
For each $\cp_{\mathbf{k}^\dagger} \in \Gamma'_{Z_\lambda}$, define the element
\be \label{e liftcp definition}
(\liftcp_{\mathbf{k}^\dagger})^{J^\dagger} := \begin{cases} \displaystyle
\cp_{\mathbf{k}^\dagger} - \frac{1}{[l+1]} R^{-1}( f (E_1(\cp_{(\mathbf{k}|_{r-1})^\dagger}))) & \text{if } \sh((\mathbf{k}|_{r-1})^\dagger) = \lambda^1, \\
\cp_{\mathbf{k}^\dagger} & \text{if } \sh((\mathbf{k}|_{r-1})^\dagger) = \lambda^2.
\end{cases}
\ee
Applying $s_\lambda^{\bT^\lambda}$ to both sides of this definition ($s_\lambda^N$ is the surjection onto the  $M_\lambda$-isotypic component of $N$)
then yields $(\liftCp_{Q(\mathbf{k})^\dagger})^{J^\dagger}$ expanded in terms of the lower canonical basis of  $M_\lambda$.
\end{theorem}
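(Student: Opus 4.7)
My plan is to prove the theorem by showing that the element $(\liftcp_{\mathbf{k}^\dagger})^{J^\dagger}$ defined in \eqref{e liftcp definition} lies in the appropriate $\H_{J^\dagger}$-summand of $\field\Gamma'_{Z_\lambda} \cong M_\lambda$ modulo the cellular submodule $\bT[\gdneq\lambda]$. Applying the isotypic projection $s_\lambda^{\bT^\lambda}$ then immediately yields the formula for $(\liftCp_{Q(\mathbf{k})^\dagger})^{J^\dagger}$, since $\cp_{\mathbf{k}^\dagger} \in \Gamma'_{Z_\lambda}$ is sent to $\C_{Q(\mathbf{k})^\dagger}$ via the identification in \eqref{e sb involution} and the projection $s_\lambda$ preserves the $\H_{J^\dagger}$-summand structure.

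First I will use \eqref{e R lambda definition} to see that $M_\lambda$, as a $\H_{J^\dagger}$-module, decomposes as $M_{\lambda^1}^{J^\dagger} \oplus M_{\lambda^2}^{J^\dagger}$, corresponding to the cells $\Gamma'_{Z_{\lambda^1}}$ and $\Gamma'_{\crystalu{F_1}(Z_{\lambda^2})}$ on the right-hand side. By Theorem \ref{t right cell partial order} together with $\lambda^2 \gdneq \lambda^1$, the $\lambda^2$-cell is lower than the $\lambda^1$-cell in the $\H_{J^\dagger}$-cell order, so $M_{\lambda^2}^{J^\dagger}$ is realized as a genuine cellular submodule while $M_{\lambda^1}^{J^\dagger}$ is only a cellular quotient (realized as a semisimple complement by the semisimplicity of $\field\H_{J^\dagger}$). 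In the case $\sh((\mathbf{k}|_{r-1})^\dagger) = \lambda^2$, the image $R(\cp_{\mathbf{k}^\dagger}) = \cp_{(\mathbf{k}|_{r-1})^\dagger}$ lies in the cell $\Gamma'_{\crystalu{F_1}(Z_{\lambda^2})}$, so $\cp_{\mathbf{k}^\dagger}$ itself already lies in the submodule $M_{\lambda^2}^{J^\dagger}$ (modulo cells below $\Gamma'_{Z_\lambda}$), and the formula's prescription $(\liftcp_{\mathbf{k}^\dagger})^{J^\dagger} = \cp_{\mathbf{k}^\dagger}$ is the correct projection.

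For the harder case $\sh((\mathbf{k}|_{r-1})^\dagger) = \lambda^1$, I must subtract the $M_{\lambda^2}^{J^\dagger}$-component of $\cp_{\mathbf{k}^\dagger}$. The strategy is to verify that the operator $G := \frac{1}{[l+1]} R^{-1} \circ f \circ E_1 \circ R$ descends to the projection of $M_\lambda$ onto $M_{\lambda^2}^{J^\dagger}$ along $M_{\lambda^1}^{J^\dagger}$. The operator $G$ is $\H_{J^\dagger}$-equivariant on the cellular subquotient: $R$ is equivariant by construction (since $\H_{J^\dagger}$ fixes the first letter of each word), $E_1$ commutes with $\H_r \supseteq \H_{J^\dagger}$ by quantum Schur-Weyl duality (Theorem \ref{c Schur-Weyl duality basic}), and $f = \crystalu{F_1}$, viewed through the balanced triple structure of Theorem \ref{t theorem 6.2.2 HK}, intertwines the $\H_{J^\dagger}$-action modulo lower cells. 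By Schur's lemma, the induced endomorphism of $M_\lambda$ acts as a scalar on each irreducible summand; the scalar is zero on $M_{\lambda^1}^{J^\dagger}$ (from the support of $f$ on the $\crystalu{F_1}(Z_{\lambda^2})$-cell) and the normalization $\frac{1}{[l+1]}$ is chosen so the scalar on $M_{\lambda^2}^{J^\dagger}$ is $1$. I will verify the latter on a Yamanouchi highest-weight test vector using the graphical calculus of Theorem \ref{t FK F action on c basis}(b): a Yamanouchi word of content $\lambda^2$ has exactly $l+1 = \lambda_1 - \lambda_2 + 1$ unpaired $1$s, so the dual $E_1$-action produces a term with the maximal coefficient $[l+1]$ that is selected by the subsequent $\crystalu{F_1}$, precisely matching the normalization.

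The main obstacle will be the combinatorial verification via Theorem \ref{t FK F action on c basis}(b)---specifically, tracking the interplay between $E_1$ (which sums over preimages under $\F_{(j)}$ with coefficients $[\alpha(\mathbf{k}', \mathbf{k}|_{r-1})]$) and $\crystalu{F_1}$ (which selects the rightmost unpaired $1$)---to confirm that $G$ acts as the identity on all of $M_{\lambda^2}^{J^\dagger}$ and not merely on one distinguished vector. A closely related subtlety is the rigorous identification of $f$, a crystal-level combinatorial operator defined modulo the parameter, with an honest $\H_{J^\dagger}$-equivariant endomorphism of the cellular subquotient; this will hinge on the compatibility between the crystal lattice and the $\H_r$-cellular filtration established in Theorem \ref{t lifted lower canonical basis}.
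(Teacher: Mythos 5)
Your proposal takes a genuinely different route from the paper, and it contains a gap that is not merely a ``subtlety to address later'' but is the entire content of the theorem.

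The paper's proof is a direct, bare-hands verification.  It shows that $E_1\bigl(R((\liftcp_{\mathbf{k}^\dagger})^{J^\dagger})\bigr)$ vanishes modulo $V^{\tsr r-1}[\gdneq\lambda^2]$, by computing coefficients in the lower canonical basis: one checks that for $\mathbf{j}$ Yamanouchi of content $\lambda^2$, the element $\cp_{\mathbf{j}^\dagger}$ occurs in $E_1 f\cp_{\mathbf{j}'^\dagger}$ only when $\mathbf{j}' = \mathbf{j}$, with coefficient exactly $[l+1]$.  It then observes that a weight-$\lambda^1$ vector annihilated by $E_1$ modulo $V^{\tsr r-1}[\gdneq \lambda^2]$ differs from a genuine highest weight vector only by terms lying in cells below $\Gamma'_{Z_{\lambda^1}}\sqcup\Gamma'_{\crystalu{F_1}(Z_{\lambda^2})}$, and quantum Schur--Weyl duality places that highest weight vector in the $M_{\lambda^1}$-isotypic component.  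Crucially, \emph{at no point does the paper need to know that $R^{-1}fE_1R$ is an $\H_{J^\dagger}$-module map}; only a coefficient calculation for one specific vector is required.

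Your plan instead posits that $G := \frac{1}{[l+1]}R^{-1}fE_1R$ is an $\H_{J^\dagger}$-equivariant endomorphism of the cellular subquotient, invokes Schur's lemma, and reduces to computing scalars on test vectors.  The Schur's-lemma step and the test-vector normalization are both vacuous unless the equivariance is established first, and that equivariance is precisely what is missing.  You assert that ``$f=\crystalu{F_1}$, viewed through the balanced triple structure of Theorem \ref{t theorem 6.2.2 HK}, intertwines the $\H_{J^\dagger}$-action modulo lower cells,'' but this is not a consequence of anything in Theorem \ref{t theorem 6.2.2 HK} or Theorem \ref{t lifted lower canonical basis}.  Kashiwara operators are combinatorial maps defined on crystal bases; they are not $\Uq$-module homomorphisms, and there is no general principle by which a Kashiwara operator composed with $E_1$ should commute with the commuting $\H_r$-action on $V^{\tsr r}$.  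Indeed, even for $r-1=2$, $n=2$, and with $f$ read literally as written in the definition preceding the theorem (``replace the rightmost unpaired $1$ in $\mathbf{j}^\dagger$''), one finds $fE_1(\cp_{12}\C_{s_1}) = [2]\cp_{12}$ while $\bigl(fE_1\cp_{12}\bigr)\C_{s_1} = \cp_{21}$, so the naively-interpreted operator is \emph{not} $\H_{J^\dagger}$-equivariant.  (The statement of the theorem and its surrounding definitions appear to contain a reversal typo; after correcting the direction of $\crystalu{F_1}$ the equivariance does hold in that small example, but this makes the need for a proof more acute, not less, because the claim is sensitive to exactly the kind of conventions your sketch does not pin down.)  Flagging this as a ``closely related subtlety'' at the end of your proposal does not repair the argument: every other step rests on it.

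What your approach does get right, and what it shares with the paper, is the decomposition via \eqref{e R lambda definition}, the identification of which of the two $\H_{J^\dagger}$-cells is the cellular submodule (so the bottom case of \eqref{e liftcp definition} is indeed the trivial projection), and the observation that a Yamanouchi word of content $\lambda^2$ has exactly $l+1$ unpaired $1$s, which is where the normalization $[l+1]$ comes from.  If you take this last combinatorial fact and use it directly---to show that $(E_1 - \frac{1}{[l+1]}E_1 fE_1)\cp_{(\mathbf{k}|_{r-1})^\dagger}$ has vanishing coefficient on every $\cp_{\mathbf{j}^\dagger}$ with $\mathbf{j}$ Yamanouchi---you recover the paper's argument, and no equivariance statement is needed.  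As it stands, your proposal moves the entire burden of the theorem into an unproven (and non-obvious) intertwining property of the crystal operator, and therefore does not constitute a proof.
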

It is helpful to follow the proof with an example: take $\lambda = (5,2)$ and  $\mathbf{k} = 2112111$. Then
\[ E_1(\cp_{211211^\dagger}) = \cp_{111211^\dagger} + [2]\cp_{211111^\dagger},  \]
\[ (\liftcp_{2112111^\dagger})^{J^\dagger} = \cp_{2112111^\dagger} - \frac{1}{[4]}(\cp_{1112121^\dagger} + [2]\cp_{2111121^\dagger}), \]
\setlength{\cellsize}{8pt}
\[\Big(\liftCp_{{\tiny\tableau{1 & 2 & 3 &5 &6 \\ 4 & 7}}}\Big)^{J^\dagger} = \C_{{\tiny\tableau{1 & 2 & 3 &5 &6 \\ 4 & 7}}} - \frac{1}{[4]}\Big(\C_{{\tiny\tableau{1 & 3 & 5 &6 &7 \\ 2 & 4}}} + [2]\C_{{\tiny\tableau{1 & 3 & 4 &5 &6 \\ 2 & 7}}}\Big), \]
\[\Big(\liftCp_{{\tiny\tableau{1 & 3 & 4 &6 &7 \\ 2 & 5}}}\Big)^{J} = \C_{{\tiny\tableau{1 & 3& 4 &6 &7 \\ 2 & 5}}} - \frac{1}{[4]}\Big(\C_{{\tiny\tableau{1 & 2 & 3 &4 &6 \\ 5 & 7}}} + [2]\C_{{\tiny\tableau{1 & 3 & 4 &5 &6 \\ 2 & 7}}}\Big). \]
\begin{proof}
Assume throughout that  $\mathbf{k}$ corresponds to the top case of \eqref{e liftcp definition}, the arguments needed for the bottom case being easy.  The key fact to check is that $E_1(R((\liftcp_{\mathbf{k}^\dagger})^{J^\dagger}))$ is zero mod $V^{\tsr r - 1}[\gdneq \lambda^2]$.   To see that this would prove the theorem, let $\eta$ be the element of the weight space $(V^{\tsr r-1})^{\lambda^1}$ such that $E_1(\eta) = 0$ and  $R((\liftcp_{\mathbf{k}^\dagger})^{J^\dagger}) - \eta \in V^{\tsr r - 1}[\gdneq \lambda^2]$.   Then  $\eta$ is a highest weight vector of weight $\lambda^1$, so by quantum Schur-Weyl duality, $\eta$ belongs to the $M_{\lambda^1}$-isotypic component of  $V^{\tsr r -1}$.  Thus $R((\liftcp_{\mathbf{k}^\dagger})^{J^\dagger})$   and  $\eta$ only differ by lower canonical basis elements outside of  $ \Gamma'_{Z_{\lambda^1}} \sqcup \Gamma'_{\crystalu{F_1}(Z_{\lambda^2})}$; so by \eqref{e R lambda definition}, $(\liftcp_{\mathbf{k}^\dagger})^{J^\dagger}$, regarded as an element of the cellular subquotient  $\field\Gamma'_{Z_\lambda}$, belongs to the $M_{\lambda^1}$-isotypic component of $\Res_{\field \H_{J^\dagger}} \field\Gamma'_{Z_\lambda}$.

Now, checking the key fact amounts to showing that if
\[ (E_1 - \frac{E_1}{[l+1]} f E_1)(\cp_{(\mathbf{k}|_{r-1})^\dagger}) = (1 - \frac{E_1}{[l+1]} f) \sum_{\mathbf{j}'} [\alpha(\mathbf{j'},\mathbf{k}|_{r-1})]\cp_{\mathbf{j'}^\dagger} \]
is written as $\sum_{\mathbf{j} \in [n]^{r-1}} a_\mathbf{j} \cp_{\mathbf{j}^\dagger}$, then $a_\mathbf{j}= 0$ for $\mathbf{j}$ such that
$\mathbf{j}$ is Yamanouchi. Here we are using the fact that  $(V^{\tsr r - 1})^{\lambda^2}[\gdneq \lambda^2]$ is spanned by $\cp_{\mathbf{j}^\dagger}$ such that $\mathbf{j}$ has content $\lambda^2$ and is not Yamanouchi.
Now let $\mathbf{j}$ be of content $\lambda^2$ and Yamanouchi; then one checks that $\cp_{\mathbf{j}^\dagger}$ occurs as a term of $E_1 f \cp_{\mathbf{j'}^\dagger}$ expanded in the lower canonical basis if and only if  $\mathbf{j} = \mathbf{j'}$, and if it does occur, then its coefficient is  $[l+1]$ since  $l+1$ is the number of unpaired 1s in  $\mathbf{j}$.  It follows that $a_\mathbf{j} = 0$, as desired.
\end{proof}

\begin{remark}\label{r Lascoux's paper}
The recent paper \cite{GLS} studies the matrix $T'(\lambda)$ for $\lambda$ a two-row partition.  The lower canonical basis of $M_\lambda$ is realized in a polynomial representation of $\H_r$ and the lower seminormal basis of $M_\lambda$ is given by specialized non-symmetric Macdonald polynomials. Let $\lambda = (r/2,r/2)$ and $Q$ be the SYT of shape $\lambda$ such that the first row of $Q$ has odd entries and the second has even entries.  The authors show that the coefficients of $\gtp_{Q}$ expressed in the lower canonical basis of $M_\lambda$ (i.e. the last column of  $T'(\lambda)$) are all powers of $-\frac{1}{[2]}$ and they give a combinatorial formula for the exponents.
\end{remark}

\section*{Acknowledgments}
I am grateful to John Stembridge, Ketan Mulmuley, and Thomas Lam for helpful conversations and to Michael Bennett for  help typing and typesetting figures.

\bibliographystyle{plain}
\def\cprime{$'$}

\end{document}